\documentclass[a4paper,12pt]{amsart}
\usepackage[utf8]{inputenc}

\usepackage{url}
\usepackage[margin=1in]{geometry}  % set the margins to 1in on all sides
\usepackage{epsfig}
\usepackage{color}
\usepackage{graphicx}              % to include figures
\usepackage{amsmath}
\usepackage{amscd}               % great math stuff
\usepackage{amsfonts}
\usepackage{amssymb}             % for blackboard bold, etc
\usepackage{amsthm}                % better theorem environments
\usepackage{epsfig}
\usepackage{mathrsfs}
\usepackage{hyperref}
\usepackage{enumerate}
\usepackage{esint}

\newtheorem{theorem}{Theorem}

\newtheorem*{theorem*}{Theorem}
\newtheorem{lemma}[theorem]{Lemma}
\newtheorem*{lemma*}{Lemma}

\newtheorem*{proposition*}{Proposition}
\theoremstyle{definition}

\theoremstyle{remark}

\newcommand{\Disc}{\operatorname{Disc}}
\newcommand{\RE}{\operatorname{Re}}
\newcommand{\IM}{\operatorname{Im}}
\newcommand{\vol}{\operatorname{vol}}

\newcommand{\sgn}{\operatorname{sgn}}

\newcommand{\cusp}{\mathrm{cusp}}
\newcommand{\Eis}{\mathrm{Eis}}

     %mathematical expectation
 %probability

\newcommand{\bC}{\mathbb{C}}
\newcommand{\bH}{\mathbb{H}}

\newcommand{\bQ}{\mathbb{Q}}
\newcommand{\bR}{\mathbb{R}}

\newcommand{\zed}{\mathbb{Z}}

\newcommand{\GO}{\mathrm{GO}}
\newcommand{\GL}{\mathrm{GL}}
\newcommand{\PSL}{\mathrm{PSL}}
\newcommand{\SL}{\mathrm{SL}}
\newcommand{\SO}{\mathrm{SO}}

\newcommand{\sH}{{\mathscr{H}}}
\newcommand{\sI}{{\mathscr{I}}}

\newcommand{\sL}{{\mathscr{L}}}

\newcommand{\sO}{{\mathscr{O}}}

\newcommand{\sS}{{\mathscr{S}}}

\newcommand{\fS}{\mathfrak{S}}

\title{The shape of cubic fields}
\author{Robert Hough}

\begin{document}

\begin{abstract}
We use the method of Shintani, as developed by Taniguchi and Thorne, to prove the quantitative equidistribution  of the shape of cubic fields when the fields are ordered by discriminant.  
\end{abstract}

\thanks{Robert Hough's research is supported by NSF Grants DMS-1712682, ``Probabilistic methods in discrete structures and applications," and DMS-1802336, ``Analysis of Discrete Structures and Applications"}

\maketitle

\section{Introduction}
A degree $n$ number field $K/\bQ$ has $r_1$ real and $r_2$ complex embeddings, $n = r_1 + 2r_2$.  Let the real embeddings be $\sigma_1, ..., \sigma_{r_1}: K \to \bR$ and the complex embeddings be $\sigma_{r_1+1}, ..., \sigma_{r_1 + r_2}: K \to \bC$.  The canonical embedding
\begin{equation}
 \sigma(x) = (\sigma_1(x), ..., \sigma_{r_1+r_2}(x)) \in \bR^{r_1} \times \bC^{r_2}
\end{equation}
is an injective ring homomorphism.  Consider $\bC$ to be a 2-dimensional real vector space.  With this identification, the ring of integers $\sO \subset K$ is an $n$-dimensional lattice in $\bR^n$ under the mapping
\begin{equation}
 x \mapsto (\sigma_1(x), ..., \sigma_{r_1}(x), \RE \sigma_{r_1 + 1}(x), \IM \sigma_{r_1 + 1}(x), ..., \RE \sigma_{r_1 + r_2}(x), \IM \sigma_{r_1 + r_2}(x))
\end{equation}
with covolume
\begin{equation}
 \vol(\sigma(\sO)) = 2^{-r_2}|D|^{\frac{1}{2}}
\end{equation}
where $D=D(K)$ is the field discriminant. 

An old theorem of Hermite \cite{S13} states that there are only finitely many number fields of a given discriminant.  Thus it is natural to ask, when number fields of a fixed degree are ordered by growing discriminant,  how is the ring of integers distributed as a lattice?  Note that, as $\sigma(1)$ is present in the embedding for all $K$, $\sigma(\sO)$ always has a short vector in this direction, relative to the volume.  Thus define the lattice shape $\Lambda_K$ to be the $(n-1)$ dimensional orthogonal projection in the space orthogonal to $\sigma(1)$, rescaled to have covolume 1.  

In the case of $S_n$ fields of degree $n = 3$ and $n = 3, 4, 5$ Terr \cite{T97}  and Bhargava and Harron \cite{BH16} prove that the shape of $\Lambda_K$ becomes equidistributed in the space
\begin{equation}
 \sS_{n-1} := \GL_{n-1}(\zed) \backslash \GL_{n-1}(\bR)/\GO_{n-1}(\bR)
\end{equation}
with respect to the induced probability Haar measure.  Their arguments use the geometry of numbers and obtain only the asymptotic equidistribution.  
A natural basis of functions in which to study equidistribution on the larger quotient \begin{equation}\Lambda_{n-1} := \SL_{n-1}(\zed) \backslash \SL_{n-1}(\bR)\end{equation} consist in joint eigenfunctions of the Casimir operator and its $p$-adic analogues, the Hecke operators.  Thus in the case $n=3$ the space $\SL_{2}(\zed) \backslash \SL_{2}(\bR)$ may be decomposed spectrally into the constant function, cusp forms, and  Eisenstein series.
Our main result obtains quantitative cuspidal equidistribution of the shape of cubic fields when the fields are ordered by increasing size of discriminant.

Identify $\Lambda_K$ with a point $x_K$ in the homogeneous space $\SL_2(\zed)\backslash \SL_2(\bR)$ by choosing  a base-point, which is specified explicitly below. 
\begin{theorem}\label{cubic_field_theorem}
 Let $\phi$ be a cuspidal automorphic form on $\SL_2(\zed)\backslash \SL_2(\bR)$, which transforms on the right by a character of $\SO_2(\bR)$ of degree $2k$, and which is an eigenfunction of the Casimir operator and the Hecke operators.  Let $F \in C_c^\infty(\bR^+)$ be a smooth test function.    For any $\epsilon > 0$, as $X \to \infty$,
 \begin{equation}
  N_{3,\pm}(\phi, F,X):=\sum_{[K:\bQ]=3} \phi(\Lambda_K) F\left(\frac{\pm \Disc(K)}{X} \right) \ll_{\phi, \epsilon}  X^{\frac{2}{3} + \epsilon}.
 \end{equation}

\end{theorem}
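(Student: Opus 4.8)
The plan is to convert the sum over cubic fields into a $\phi$--twisted Shintani zeta function attached to the prehomogeneous vector space of binary cubic forms, and to read off the bound by a contour shift, using the cuspidality of $\phi$ to kill the polar terms that, in the untwisted count of Taniguchi and Thorne, produce the main term of size $X$ and the secondary term of size $X^{5/6}$. First I would set up the parametrization. By the Delone--Faddeev correspondence, cubic fields $K$ are in bijection with $\GL_2(\zed)$--orbits of irreducible, \emph{maximal} integral binary cubic forms $x$, with $\Disc(x)=\Disc(K)$; maximality is a congruence condition modulo squares of primes, handled by the usual inclusion--exclusion sieve of Datskovsky--Wright and Belabas--Bhargava--Pomerance. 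The point I would exploit is that the lattice shape $\Lambda_K$ is a function of where $x$ sits in its real orbit: for each sign, the set of real binary cubic forms of a fixed nonzero discriminant is a single $\GL_2(\bR)$--orbit with finite stabilizer, so after quotienting by $\GL_2(\zed)$ and fixing the discriminant one is led to $\SL_2(\zed)\backslash\SL_2(\bR)$ up to a finite group; matching the explicit base-point of the statement with the reduction theory of the form (equivalently, with the trace form of the associated cubic ring) identifies $\Lambda_K$ with this point. Hence $\phi(\Lambda_K)=\Phi(x)$ for a $\GL_2(\zed)$--invariant function $\Phi$ on the space of forms which, along each discriminant fiber, is a push-forward of $\phi$ and so inherits its smoothness and its weight under the maximal compact.

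With this in hand I would introduce, for each sign, the twisted Shintani zeta function
\[
\xi_{\pm}(\phi,s)\;=\;\sum_{[K:\bQ]=3,\ \pm\Disc(K)>0}\phi(\Lambda_K)\,|\Disc(K)|^{-s},
\]
re-expressed via the maximality sieve as a combination of Shintani zeta functions of $\Phi$ over sublattices of the lattice of binary cubic forms, so that, with $\widetilde F$ the (entire, rapidly vertically decaying) Mellin transform of $F$,
\[
N_{3,\pm}(\phi,F,X)\;=\;\frac{1}{2\pi i}\int_{(\sigma)}\widetilde F(s)\,X^{s}\,\xi_{\pm}(\phi,s)\,ds
\]
for $\sigma$ large. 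Following Shintani's integral representation as developed by Taniguchi and Thorne, each constituent is the integral over $\GL_2(\zed)\backslash\GL_2(\bR)$ of a theta series twisted by $\Phi$ against $|\det g|^{s}$; Poisson summation on the lattice of binary cubic forms, together with the explicit local computations at the sieving primes, furnishes the meromorphic continuation and a functional equation relating $\xi_{\pm}(\phi,s)$ to dual zeta functions, the $\Gamma$--factors being governed by the archimedean orbital integral and hence by the weight $2k$ and the Casimir eigenvalue of $\phi$ (the Hecke property giving the constituents an Euler-product structure convenient for the vertical estimates). The only possible poles in $\RE(s)>2/3$ lie at $s=1$ and $s=5/6$. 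The residue at $s=1$ is proportional to $\int_{\SL_2(\zed)\backslash\SL_2(\bR)}\phi$, which vanishes; the residue at $s=5/6$ — the $\phi$--twisted analogue of the Taniguchi--Thorne secondary term — comes from a degenerate orbit and is proportional to a horocycle period (a constant term) of $\phi$, which also vanishes because $\phi$ is cuspidal. Therefore $\xi_{\pm}(\phi,s)$ is holomorphic on $\RE(s)\ge 2/3+\epsilon$, and the functional equation together with Phragm\'en--Lindel\"of gives there a bound polynomial in $|\IM(s)|$; shifting the contour to $\RE(s)=2/3+\epsilon$ and estimating trivially yields $N_{3,\pm}(\phi,F,X)\ll_{\phi,\epsilon}X^{2/3+\epsilon}$.

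The main obstacle is the interaction of the maximality sieve with the analytic continuation: $\xi_{\pm}(\phi,s)$ over maximal forms is an infinite combination of sublattice zeta functions whose conductors grow, so the contour shift must be balanced against a truncation of the sieve, the tail being controlled by the (non-negligible) count of forms that are non-maximal at a large prime; it is this balance, rather than any pole, that pins the exponent at $2/3$, and improving it would require genuinely new input into either the sieve or the vertical growth of the twisted zeta function. A secondary difficulty is making the vanishing of the $s=5/6$ residue rigorous: its untwisted counterpart is an intricate expression involving values of Dirichlet $L$--functions, and one must identify the $\Phi$--weighted residue cleanly as a period of $\phi$ over a unipotent orbit before invoking cuspidality. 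Finally, some care is needed in the first step — matching $\Lambda_K$ with the reduction theory of the binary cubic form so that $\Phi$ genuinely realizes $\phi(\Lambda_K)$ with the correct equivariance under the finite stabilizer and the maximal compact — but this is the classical computation already carried out, for the asymptotic statement, by Terr and by Bhargava--Harron.
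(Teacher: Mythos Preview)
Your plan is correct at the strategic level and matches the paper's architecture: the Delone--Faddeev parametrization, the realization of $\phi(\Lambda_K)$ as a $\GL_2(\zed)$-invariant function on forms, Shintani's orbital integral, cuspidality to dispose of the singular/polar contributions, and a truncated maximality sieve whose balance against the ``dual'' side fixes the exponent $2/3$ (the paper takes $Q=X^{1/3}$ exactly as you anticipate). Your identification of the would-be $s=5/6$ residue with a unipotent period is also what the paper finds: the relevant singular contribution (from the orbit of $(0,0,0,m)$) vanishes upon integrating over $N$ because $\phi$ has no constant Fourier term.

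The technical execution differs. You propose to continue each $q$-sublattice zeta function, invoke a functional equation plus Phragm\'en--Lindel\"of for polynomial vertical growth, and shift the contour to $\RE(s)=\tfrac{2}{3}+\epsilon$. The paper instead uses a \emph{split} (approximate) functional equation: it cuts the orbital integral at $\chi(g)=1$, applies Poisson summation on the small-$\chi$ side, and obtains three pieces $Z_q^{\pm,+}$, $\hat{Z}_q^{\pm,+}$, $Z_q^{\pm,0}$ that are individually entire with explicit integral representations. This lets the contour be pushed all the way to $\RE(s)=\epsilon$ (respectively $\tfrac14+\epsilon$ for the singular piece) with no convexity argument at all. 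The $q$-dependence then enters transparently: the dual piece contributes $O(Q^2X^\epsilon)$ via the Taniguchi--Thorne estimate $\sum_{|m|\le Y}\sum_i|\hat{\Phi}_q(\hat{x}_{i,m})|/|\hat{\Gamma}(i,m)|\ll q^{-7+\epsilon}Y$ applied with $Y\asymp q^8$, while the sieve tail is $O(X/Q^{1-\epsilon})$. Your Phragm\'en--Lindel\"of route should in principle reproduce this balance, but it would first require a clean functional equation for the $\phi$-twisted $\sL_q^{\pm}$ with explicit gamma factors and explicit conductor dependence in $q$ --- work the paper sidesteps entirely by staying on the orbital-integral side.
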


This bound should be compared to the number of cubic fields with discriminant of size at most $X$, which is of order $X$. Besides the significant cancellation exhibited in our theorem, the advantage of the method is in obtaining the equidistribution of a further angle of the lattice when oriented in $\bR^3$ relative to $\sigma(1)$, which is accomplished by permitting the cusp form to transform on the right by a character of $\SO_2(\bR)$. A further advantage of the method is that it appears to extend to treat the joint cuspidal equidistribution of the shape of quartic fields when paired with the cubic resolvent, extending work of Yukie \cite{Y93}.  We intend to return to this topic in a forthcoming publication.

In \cite{TT13b} Theorem 1.3 a counting result for cubic fields is proved which permits imposing finitely many local specifications, with applications in \cite{MP13}, \cite{CK15}, and \cite{EPW17}.  
We expect the method of Theorem \ref{cubic_field_theorem} can be extended to accommodate finitely many local conditions, but have not done so here.

\subsection{Discussion of method}
Shintani \cite{S72} and Shintani and Sato \cite{SS74} introduce zeta functions enumerating integral orbits in prehomogeneous vector spaces, proving meromorphic continuation and functional equations. Taniguchi and Thorne \cite{TT13a}, \cite{TT13b} use Shintani's zeta function in the case of binary cubic forms together with a sieve to give the best known error terms in the counting function of cubic fields ordered by discriminant. In \cite{H17} the author modified this construction in the case of binary cubic forms, by introducing an automorphic form evaluated at a representative of each orbit.  In proving Theorem \ref{cubic_field_theorem} we combine the construction of \cite{H17} with the methods of \cite{TT13b}, along with an argument related to the approximate functional equation from the theory of $L$-functions.

\subsection*{Notation and conventions}
We use the following conventions regarding groups. $G_\bR = \GL_2(\bR)$, $G^1 = \SL_2(\bR)$, $G^+ = \{g \in \GL_2(\bR): \det g > 0\}$, $G_\zed = \GL_2(\zed)$, $\Gamma = \SL_2(\zed)$.
Following Shintani, 
\begin{align} A &= \left\{\begin{pmatrix} t &\\ & \frac{1}{t}\end{pmatrix}: t \in \bR_{>0}\right\},\; N = \left\{\begin{pmatrix} 1 &0\\ x & 1 \end{pmatrix}: x \in \bR\right\}, \; N' = \left\{\begin{pmatrix} 1 & x \\ 0& 1 \end{pmatrix}: x \in \bR\right\}, \\ \notag K &= \left\{\begin{pmatrix} \cos \theta & \sin \theta\\ -\sin \theta & \cos \theta \end{pmatrix}: \theta \in \bR\right\}\end{align}
with group elements
\begin{align}
 a_t &= \begin{pmatrix} t &\\& \frac{1}{t}\end{pmatrix}, \; n_x = \begin{pmatrix}1&0\\x &1 \end{pmatrix},\; \nu_x = \begin{pmatrix} 1&x\\ 0&1\end{pmatrix},\; 
 k_\theta = \begin{pmatrix} \cos \theta & \sin \theta\\ -\sin \theta & \cos \theta \end{pmatrix}.
\end{align}
The Iwasawa decomposition $G = KAN$ is used. Haar measure is normalized by setting, for $f \in L^1(G^1)$,
\begin{align}
 \int_{G^1} f(g) dg &= \frac{1}{2\pi} \int_0^{2\pi}\int_{-\infty}^\infty \int_0^\infty f(k_\theta a_t n_u)\frac{dt}{t^3}du d\theta
\end{align}
and for $f \in L^1(G^+)$,
\begin{equation}
 \int_{G^+}f(g)dg = \int_0^\infty \int_{G^1} f\left(\begin{pmatrix} \lambda &\\ &\lambda\end{pmatrix} g \right)dg \frac{d\lambda}{\lambda}.
\end{equation}

We abbreviate contour integrals $\frac{1}{2\pi i} \int_{c-i\infty}^{c + i \infty} F(z) dz = \oint_{\RE(z) = c} F(z)dz$. Denote $e(x)$ the additive character $e^{2\pi i x}$.
The argument uses the following pair of standard Mellin transforms.  Write $K_\nu$ for the $K$-Bessel function.
For $\RE(s)> |\RE \nu|$, (\cite{I02}, p.205)
\begin{equation}
 \int_0^\infty K_\nu(x)x^{s-1}dx = 2^{s-2}\Gamma\left(\frac{s + \nu}{2}\right)\Gamma\left(\frac{s-\nu}{2}\right).
\end{equation}
For $0 < \RE(s) < 1$, (\cite{B53}, p.13)
\begin{equation}
 \int_0^\infty e^{ix}x^{s-1}dx = \Gamma(s)e^{i \frac{\pi s}{2}}.
\end{equation}
Given $F$ which is smooth, of compact support on $\bR^\times$, the Mellin transform
\begin{equation}
 \tilde{F}(s) = \int_0^\infty F(x) x^{s-1}dx
\end{equation}
is entire.  The operator $x \frac{d}{dx}$ acts on the Mellin transform by multiplying by $-s$, as may be checked by integrating by parts,
\begin{equation}\label{mellin_operator}
 \widetilde{x \frac{d}{dx} F}(s) = \int_0^\infty F'(x) x^s dx = -s \int_0^\infty F(x)x^{s-1}dx = -s \tilde{F}(s).
\end{equation}

\section{Cubic rings and binary cubic forms}
A cubic ring $R$ over $\zed$ is a free rank 3 $\zed$ module equipped with a ring multiplication.  A set of generators $\langle 1, \omega, \theta \rangle$ for a cubic ring $R$ is said to be \emph{normalized} if their multiplication law satisfies, for some integers $\ell, m, n, a, b, c, d$,
\begin{align}
\omega \theta &= n\\
\notag \omega^2 &= m + b\omega - a \theta\\
\notag \theta^2 &= \ell + d\omega - c\theta, 
\end{align}
see \cite{B04a}.  A basis for $R$ may be reduced to a normalized basis by choosing representatives for $\omega$ and $\theta$ modulo $\zed$. $\GL_2(\zed)$ acts on cubic rings over $\zed$ by forming linear combinations of the generators $\langle \omega, \theta \rangle$, then renormalizing.  Starting with a distinguished basis $\langle 1, \omega, \theta \rangle$ for $\bR^3$ or $\bR \times \bC$, $\GL_2(\bR)$ acts in the same way.  The `shape' of the basis thus corresponds with a point in the space of lattices $\SL_2(\zed)\backslash \SL_2(\bR)$.  This is the usual identification of $\SL_2(\zed)\backslash \SL_2(\bR)$ with the space of 2-dimensional lattices, since normalizing the basis acts in the direction of 1. 

The fact that, after tensoring with $\bR$, each cubic ring over $\zed$ of non-zero discriminant may be realized as a point in one of these two spaces has been proven via a correspondence with the theory of real and integral binary cubic forms.
An \emph{integral binary cubic form} is a form $f(x,y) = ax^3 + bx^2y + cxy^2 + dy^3$ with $(a,b,c,d) \in \zed^4$.  $\GL_2(\zed)$ acts on the space of integral binary cubic forms by forming linear combinations of $x$ and $y$.
Gan, Gross and Savin \cite{GGS02}, extending earlier work of Delone and Fadeev \cite{DF64}, proved the following parameterization of cubic rings over $\zed$. 
\begin{theorem}[\cite{B04a}, Theorem 1]
	There is a canonical bijection between the set of $\GL_2(\zed)$-equivalence classes of integral binary cubic forms and the set of isomorphism classes of cubic rings, in which the form $f(x,y) = ax^3 + bx^2y + cxy^2 + dy^3$ corresponds to the ring $R(f)$ with basis $\langle 1, \omega, \theta\rangle$ and multiplication law
	\begin{align}
	\omega \theta &= -ad\\\notag
	\omega^2 &= -ac + b \omega -a\theta\\
	\notag \theta^2 &= -bd + d\omega - c\theta.
	\end{align}
	Moreover, the discriminant of $f$ and $R(f)$ are equal.
\end{theorem}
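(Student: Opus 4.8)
The plan is to exhibit the correspondence as a composite of canonical bijections, passing from $\GL_2(\zed)$-orbits of forms, to pairs (cubic ring, normalized basis) modulo $\GL_2(\zed)$, to isomorphism classes of cubic rings; the ring attached to a form is defined by the displayed multiplication law, the inverse is read off from a normalized basis, and the two $\GL_2(\zed)$-actions are then matched. To begin, one checks that for every $(a,b,c,d)\in\zed^4$ the displayed multiplication law does define a commutative associative unital ring $R(f)$. Commutativity and the unit axiom are immediate; for associativity, since $R(f)$ is free of rank $3$ and generated over $1$ by $\omega$ and $\theta$, it suffices to verify the two identities $(\omega\cdot\omega)\cdot\theta=\omega\cdot(\omega\cdot\theta)$ and $(\omega\cdot\theta)\cdot\theta=\omega\cdot(\theta\cdot\theta)$, because every other triple product of the basis vectors $1,\omega,\theta$ reduces to one of these or is trivial by commutativity, whence associativity of all products follows by bilinearity and the standard rebracketing induction. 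Substituting the multiplication law, each of the two identities becomes a polynomial identity in $a,b,c,d$ that holds identically.

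For the inverse, take a cubic ring $R$ and pass to a normalized basis $\langle 1,\omega,\theta\rangle$, available by reducing $\omega,\theta$ modulo $\zed$. Writing the general normalized structure constants as $\omega\theta=n$, $\omega^2=m+p\omega+q\theta$, $\theta^2=\ell+r\omega+s\theta$ with all letters in $\zed$, the two associativity identities above now \emph{force} $n=qr$, $m=-qs$, $\ell=-rp$, with no further restriction; putting $a=-q$, $b=p$, $c=-s$, $d=r$ converts the multiplication law into precisely the displayed one, so $R\cong R(f)$ with $f=ax^3+bx^2y+cxy^2+dy^3$. One also records that the renormalizing translation is unique once the linear change of $\langle\omega,\theta\rangle$ is fixed: if $\langle 1,\omega,\theta\rangle$ and $\langle 1,\omega+u,\theta+v\rangle$ are both normalized then expanding $(\omega+u)(\theta+v)$ forces $u=v=0$. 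Hence a cubic ring together with a normalized basis is the same thing as a point of $\zed^4$, equivalently an integral binary cubic form.

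It remains to pass to isomorphism, respectively $\GL_2(\zed)$-equivalence, classes. Any two normalized bases of a fixed cubic ring differ by an element of $\GL_2(\zed)$ acting on $\langle\omega,\theta\rangle$ followed by the unique renormalizing translation, and every such operation produces another normalized basis, so the set of isomorphism classes of cubic rings is identified with $\zed^4/\GL_2(\zed)$. The heart of the argument, and the step I expect to be the most delicate — it is essentially a sign-careful bookkeeping computation — is to check that this induced action of $\GL_2(\zed)$ on $\zed^4$ coincides with the standard action on integral binary cubic forms, twisted by $(\det\gamma)^{-1}$ (and untwisted on $\SL_2(\zed)$): one applies a general $\gamma\in\GL_2(\zed)$ to $\langle\omega,\theta\rangle$, expands $\omega_{\mathrm{new}}\theta_{\mathrm{new}}$, $\omega_{\mathrm{new}}^2$, $\theta_{\mathrm{new}}^2$ by the multiplication law, renormalizes, and verifies that the resulting coefficients are those of $(\det\gamma)^{-1}f$ precomposed with $\gamma$. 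With this in hand, $[f]\mapsto[R(f)]$ and its inverse are mutually inverse bijections, manifestly independent of all choices, hence canonical.

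Finally, for the discriminant identity, use the multiplication law to write down the matrices of multiplication by $\omega$ and by $\theta$ on $R(f)$ in the basis $\langle 1,\omega,\theta\rangle$, extract the traces $\Tr(1)=3$, $\Tr(\omega)$, $\Tr(\theta)$, $\Tr(\omega^2)$, $\Tr(\omega\theta)$, $\Tr(\theta^2)$, and compute the determinant of the $3\times3$ Gram matrix of the trace pairing; it equals $b^2c^2-4ac^3-4b^3d-27a^2d^2+18abcd=\Disc(f)$. The labour is cut by observing that both $\Disc(R(f))$ and $\Disc(f)$ scale by $(\det\gamma)^2$ under the $\GL_2(\zed)$-actions, so it suffices to verify the identity up to this common equivariance.
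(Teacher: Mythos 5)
The paper does not prove this statement at all: it is quoted verbatim as Theorem 1 of \cite{B04a} (the Delone--Faddeev correspondence in the form given by Gan--Gross--Savin and Bhargava), so there is no in-paper proof to compare against. Measured against the standard argument in those sources, your outline follows essentially the same route and is sound: associativity of $R(f)$ does reduce, by commutativity and the antisymmetry of the associator in its outer arguments, to the two identities $(\omega^2)\theta=\omega(\omega\theta)$ and $(\omega\theta)\theta=\omega(\theta^2)$, which hold identically; conversely, for a normalized basis with $\omega\theta=n$, $\omega^2=m+p\omega+q\theta$, $\theta^2=\ell+r\omega+s\theta$, these identities force exactly $n=qr$, $m=-qs$, $\ell=-rp$ (the constant-term conditions $pn+q\ell=0$ and $rm+sn=0$ are then automatic), recovering the displayed law with $a=-q$, $b=p$, $c=-s$, $d=r$; and the uniqueness of the renormalizing translation is as you say. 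The one step you correctly identify as the heart of the matter --- that the induced action on structure constants is the twisted form action $(\det\gamma)^{-1}f\circ\gamma$ --- is only described, not carried out, so as written the argument is an outline rather than a complete proof; it would also be worth noting explicitly that over $\GL_2(\zed)$ the twisted and untwisted orbits coincide (since $f\circ(-\gamma)=-f\circ\gamma$ for a cubic form), so the equivariance you verify does give the equivalence stated in the theorem.

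Two small corrections. First, your proposed labour-saving device for the discriminant identity does not work as stated: over $\GL_2(\zed)$ one has $(\det\gamma)^2=1$, so ``both sides scale by $(\det\gamma)^2$'' only says both are invariants and reduces nothing; to exploit equivariance one should use the twisted $\GL_2(\bQ)$ or $\GL_2(\bC)$ action together with Zariski density (one nondegenerate orbit suffices, with the singular forms handled by continuity), or simply expand the $3\times 3$ trace Gram determinant directly, which is a short computation. Second, in passing from forms-with-bases to isomorphism classes you should record that a ring isomorphism carries normalized bases to normalized bases (immediate, since it fixes $1$ and preserves the multiplication table), which is what makes the induced map on classes well defined in both directions.
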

In the correspondence, irreducible binary cubic forms correspond to orders in cubic fields.

The reader is referred to Chapter 2 of \cite{S72} for the following discussion of the space of binary cubic forms.  We have adopted the same conventions for ease of comparison.
$G_{\bR}$ acts naturally on the space
\begin{equation}
V_\bR = \left\{ax^3 + bx^2y + cxy^2 + dy^3: (a,b,c,d) \in
\bR^4\right\}
\end{equation}
of real binary cubic forms via, for $f \in V_{\bR}$ and $g \in G_{\bR}$,
\begin{equation}
g\cdot f(x,y) = f((x,y)\cdot g).
\end{equation}
Note that this differs by a factor of $\det g$ from the $G_{\bR}$ action on the basis $\langle 1, \omega, \theta\rangle$ of a cubic ring, the former action is called a twisted action.
The action $(x,y)\cdot g$ is a right action on $\bR^2$, so that the action on $V_{\bR}$ is a left action.
The discriminant
\begin{equation}
D = b^2c^2 + 18abcd- 4ac^3  - 4b^3d -27a^2d^2, 
\end{equation}
which is a homogeneous polynomial of degree four on $V_{\bR}$, is a relative
invariant:  $D(g\cdot f) = \chi(g)D(f)$
where $\chi(g) = \det(g)^6$. The dual space of $V_{\bR}$ is identified with
$V_{\bR}$ via the alternating
pairing \begin{equation}
\langle x, y \rangle = x_4y_1 - \frac{1}{3}x_3y_2 + \frac{1}{3}x_2y_3 -
x_1y_4.
\end{equation}
Let $\tau$ be the map $V_{\bR}\to V_{\bR}$ carrying each basis vector to its
dual basis
vector; the discriminant $\hat{D}$ on the dual space is normalized such that
$\tau$ is discriminant preserving.  There is an involution $\iota$ on $G_{\bR}$ given
by
\begin{equation}
g^\iota = \begin{pmatrix} 0&-1\\ 1 &0\end{pmatrix} (g^{-1})^t \begin{pmatrix}
0 & 1\\ -1 &0\end{pmatrix} = \frac{g}{\det g}.
\end{equation}
This satisfies, for all $g \in G_{\bR}$, $x \in V_\bR$, $y \in \hat{V}_\bR$,
\begin{equation}
\langle x, y \rangle = \langle g\cdot x, g^\iota \cdot y \rangle.
\end{equation}
Given $f \in L^1(V_{\bR})$ one has the Fourier transforms
\begin{align}
\hat{f}(x) &= \int_{V_\bR}f(y)e(-\langle x, y\rangle)dy.
\end{align}
If $\hat{f}$ also is in $L^1(V_{\bR})$ then
\begin{align}
f(x ) &= \frac{1}{9}\int_{V_\bR}\hat{f}(y)e(\langle x,y \rangle)dy.
\end{align}
Translation, dilation and the group action act on the Fourier transform as follows,
\begin{align} \label{translation_dilation_action}
 \int_{V_{\bR}} f\left(g \cdot \left(q^2 y + a\right)\right) e(-\langle x, y \rangle)dy& = \frac{e\left(\frac{\langle x, a\rangle}{q^2}\right)}{q^8} \int_{V_{\bR}}f(g \cdot y) e\left(-\left\langle \frac{x}{q^2}, y\right\rangle\right)dy\\
\notag & = \frac{e\left(\frac{\langle x, a \rangle}{q^2}\right)}{q^8 \chi(g)} \int_{V_{\bR}}f( y) e\left(-\left\langle \frac{ x}{q^2}, g^{-1}\cdot y\right\rangle\right)dy\\
\notag & = \frac{e\left(\frac{\langle x, a \rangle}{q^2}\right)}{q^8 \chi(g)} \int_{V_{\bR}}f( y) e\left(-\left\langle g^{\iota}\cdot \left(\frac{x}{q^2}\right), y\right\rangle\right)dy\\
\notag & = \frac{e\left(\frac{\langle x, a\rangle}{q^2}\right)\hat{f}\left(g^\iota \cdot \left(\frac{x}{q^2} \right) \right)}{q^8 \chi(g)}.
\end{align}

The set of forms of zero discriminant are called the singular set, $S$. The
non-singular forms split into spaces $V_+$ and $V_-$ of positive and negative
discriminant.  The space $V_+$ is a single $G^+$ orbit
with representative $x_+ = \left(\frac{1}{\sqrt{2}},0,\frac{-1}{\sqrt{2}},0\right)$, which has discriminant $1$ and stability group $I_{x_+}$ of order 3.
$V_-$ is also a single $G^+$ orbit with representative $x_- = \left(\frac{1}{\sqrt{2}},0,\frac{1}{\sqrt{2}},0\right)$ with discriminant $-1$ and
trivial stabilizer, see \cite{S72}, Proposition 2.2.  This orbit description is the reason that the shape of a cubic field may be identified with a group element in the real group action.

Set
\begin{equation}
w_1 = (0,0,1,0), \qquad w_2 = (0,0,0,1).
\end{equation}
The singular set is the disjoint union
\begin{equation}
S = \{0\}\sqcup G^1 \cdot w_1 \sqcup G^1 \cdot w_2.
\end{equation}
The stability group for the action of $G^1$ on $w_1$ is trivial $I_{w_1} =
\{1\}$, while on $w_2$ it is
\begin{equation}
I_{w_2}= N,
\end{equation}
see \cite{S72}, Proposition 2.3.

Over
$\zed$, the space of integral forms is a lattice $L$.  For each $m \neq 0$ the
set $L_m$ of integral forms of discriminant $m$ split into a finite
number $h(m)$ of $\SL_2(\zed)$ orbits. $h(m)$ is called the class number of
binary cubic forms of discriminant $m$.
With respect to the alternating pairing, the dual lattice $\hat{L}$ of $L$ is
given by forms
\begin{equation}
\hat{L} = \left\{ax^3 + bx^2 y + cxy^2 + dy^3: a,d \in \zed, b,c \in 3\zed
\right\}
\end{equation}
with middle coefficients divisible by 3. The class number of dual forms of
discriminant $m$, also finite, is indicated $\hat{h}(m)$.

Shintani obtained the following description of the singular integral forms.
\begin{lemma}\label{fibration_lemma} The singular forms  $\hat{L}_0$ are the disjoint union
\begin{align}
\hat{L}_0 &= \{0\}\sqcup\bigsqcup_{m=1}^\infty \bigsqcup_{\gamma \in
	\Gamma/\Gamma \cap N} \{\gamma \cdot (0,0,0,m)\}\sqcup \bigsqcup_{m=1}^\infty
\bigsqcup_{n=0}^{3m-1}\bigsqcup_{\gamma \in \Gamma}\{\gamma \cdot (0,0, 3m,n)\}.
\end{align}
Let
\begin{align}L_0(I) &= \bigsqcup_{m=1}^\infty \bigsqcup_{\Gamma/\Gamma\cap
	N}\{\gamma
\cdot (0,0,0,m)\},\\ \notag
\hat{L}_0(II) &= \bigsqcup_{m=1}^\infty
\bigsqcup_{n=0}^{3m-1}\bigsqcup_{\gamma \in \Gamma}\{\gamma \cdot (0,0,
3m,n)\}.\end{align}
\end{lemma}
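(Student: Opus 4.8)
The plan is to read off $\hat{L}_0$ directly from the description of the real singular set $S=\{0\}\sqcup G^1\cdot w_1\sqcup G^1\cdot w_2$. A nonzero singular binary cubic form $f$ either is a perfect cube $\mu\ell^3$ (these constitute $G^1\cdot w_2$) or factors as $\mu\ell^2\ell'$ with $\ell,\ell'$ non-proportional linear forms (these constitute $G^1\cdot w_1$); in both cases the repeated factor $\ell$ vanishes at the unique point of $\mathbb{P}^1$ where $f$ vanishes to order $\geq 2$, which is therefore fixed by $\mathrm{Gal}(\overline{\bQ}/\bQ)$ once $f$ is integral, hence lies in $\mathbb{P}^1(\bQ)$. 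So the first step is to record that every $f\in\hat{L}_0\setminus\{0\}$ has a \emph{rational} repeated root and can be written with coprime integral data; since $\Gamma$ preserves $\hat{L}$, we are then free to act.

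For the perfect cubes I would write $f=\lambda(px+qy)^3$ with $\gcd(p,q)=1$: membership in $\hat{L}$ forces $\lambda\in\zed$ (a B\'ezout combination of the outer coefficients $\lambda p^3,\lambda q^3$, using $\gcd(p^3,q^3)=1$), while conversely $m(px+qy)^3$ lies in $\hat{L}$ for every $m\in\zed$ since its middle coefficients $3mp^2q,3mpq^2$ automatically lie in $3\zed$. Replacing $(m,p,q)$ by $(-m,-p,-q)$ lets us take $m\geq 1$, and then $m$ is the content of $f$, so $(m,p,q)$ is determined by $f$. Since $(0,0,0,m)=m\,y^3$ and the $G^1$-stabilizer of $y^3$ is $N$, the map $\gamma(\Gamma\cap N)\mapsto\gamma\cdot(0,0,0,m)$ is well defined and injective; because $\gamma\cdot y^3$ is the cube of the linear form read off from the second column of $\gamma$—which ranges over all primitive integer vectors and is constant on $\Gamma\cap N$-cosets—its image over all $\gamma$ is precisely $\{m(px+qy)^3:\gcd(p,q)=1\}$. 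Taking the union over $m\geq 1$ gives $L_0(I)$ and exhausts the perfect cubes in $\hat{L}$.

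For the forms $\mu\ell^2\ell'$ I would use transitivity of $\Gamma$ on $\mathbb{P}^1(\bQ)$ to move the double root to $[1:0]$, so that $f$ becomes $(0,0,\beta,\delta)$ with $\beta\neq 0$; then $\hat{L}$-membership forces $\beta=3m'$, $\delta=n'$ with $m',n'\in\zed$ and $m'\neq 0$. Applying $-1\in\SL_2(\zed)$, which sends $(0,0,3m',n')$ to $(0,0,-3m',-n')$, arranges $m:=|m'|\geq 1$, and applying $n_k\in\Gamma\cap N$, which sends $(0,0,3m,n)$ to $(0,0,3m,n+3mk)$, reduces to $0\leq n<3m$. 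Finally, any $\gamma$ carrying one normalized form to another must fix the double root $[1:0]$, hence lie in the subgroup generated by $-1$ and $\Gamma\cap N$; on these forms $-1$ reverses the sign of the positive coefficient $3m$ and is thus excluded, while $n_k$ acts faithfully on the last coordinate modulo $3m$. Hence distinct $(m,n)$ with $m\geq 1$, $0\leq n<3m$ give disjoint $\Gamma$-orbits with trivial stabilizer (matching $I_{w_1}=\{1\}$), which is $\hat{L}_0(II)$. Adding back $\{0\}$, and noting the three families are disjoint because of their distinct root-multiplicity patterns, completes the decomposition. The one point that genuinely needs care—rather than any conceptual obstacle—is the bookkeeping of the divisibility-by-$3$ condition defining $\hat{L}$ against the residual stabilizer of $[1:0]$ and the normalizations $m\geq 1$, $0\leq n<3m$: this is exactly what pins down the index sets and forces the coefficient $3m$ (rather than $m$) in part $(II)$; the result is Shintani's Proposition 2.3 recast in the present conventions.
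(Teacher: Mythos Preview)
Your argument is correct: you sort nonzero singular forms by root-multiplicity pattern, observe that the repeated root is rational, and then carry out the orbit bookkeeping in $\Gamma$ against the divisibility constraint defining $\hat{L}$. The verification that the content of $m(px+qy)^3$ is exactly $m$, that the stabilizer of $[1:0]$ in $\Gamma$ is $\{\pm n_k\}$, and that the normalizations $m\geq 1$, $0\leq n<3m$ pin down a unique representative in each orbit, are all handled properly.

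The paper itself does not give a proof here; it simply cites Shintani (\cite{S72}, Corollary to Proposition~2.10). Your write-up is essentially a direct reconstruction of Shintani's argument in the conventions of the present paper, so there is no genuine methodological difference---you have just made explicit what the paper outsources. One small correction: the relevant reference in Shintani is the Corollary to Proposition~2.10, not Proposition~2.3 (the latter records the real stabilizers $I_{w_1}$, $I_{w_2}$ that you use along the way).
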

\begin{proof}
 See \cite{S72}, Corollary to Proposition 2.10.
\end{proof}

The forms of positive discriminant break into two classes, the first of which
have stability group in $\Gamma$ which is trivial, and the second having
stability group of order 3.

For $m \in \zed_{\neq 0}$ choose $\{g_{i,m}\}_{1 \leq i \leq h(m)} \subset G^+,$
such that \begin{equation}\{x_{i,m} = g_{i,m}\cdot x_{\sgn \;m}\}_{1 \leq i \leq h(m)}\end{equation} are
representatives for
the $h(m)$ classes of binary cubic forms of discriminant $m$, similarly
$\{\hat{x}_{i,m} = \hat{g}_{i,m} \cdot x_{\sgn \; m}\}_{1 \leq i \leq \hat{h}(m)}$ a system of
representatives for the classes of dual forms.  The group elements $g_{i,m}$ are used in identifying the shape of the ring corresponding to $x_{i,m}$ with a point in $\Lambda_2 = \SL_2(\zed)\backslash \SL_2(\bR)$.  Set $\Gamma(i,m)< \Gamma$ the
stability group of $x_{i,m}$, similarly $\hat{\Gamma}(i,m)$.

For $f$ a Schwarz class function on $V_{\bR}$, the Fourier transform $\hat{f}$ is also Schwarz class, and the Poisson summation formula states that
\begin{equation}
 \sum_{x \in L}f(x) = \sum_{\xi \in \hat{L}} \hat{f}(\xi).
\end{equation}

\subsection{Estimates regarding the $q$-non-maximal set}

We say that a cubic ring is maximal if it is not a proper subring of another cubic ring.  This is a property which may be checked locally.  We say that a cubic ring $R$ is maximal at $p$ if $R \otimes \zed_p$ is maximal as a cubic ring over $\zed_p$, a condition which is determined by congruences modulo $p^2$.  Let $\Phi_p$ be the indicator function of $a \in V_{p^2} = (\zed/p^2\zed)^4$ such that the binary cubic form $a$ corresponds to a non-maximal cubic ring.  Note that this function is constant on $\SL_2(\zed)$ orbits. Define the Fourier transform of $\Phi_p$ by, for $b \in (\zed/p^2\zed)^4$,
\begin{equation}
\hat{\Phi}_p(b) = \frac{1}{p^8} \sum_{a \in (\zed/p^2\zed)^4} \Phi_p(a) e\left(\frac{\langle b, a\rangle}{p^2}\right).
\end{equation}
The Fourier transform of $\Phi_p$ has been explicitly determined depending on the factorization type of the discriminant $P$ at $b$ in $\zed/p^2\zed$ in \cite{TT13a}.  To state this, the orbits of $V_{p^2}$ are classified, with representatives $b$, chosen up to multiplication by a multiplicative unit. Note that entries can describe multiple orbits.

\begin{tabular}{|l|l|l|}
 \hline
 type of $b$ & $b$ & condition on the coefficients\\
 \hline
 $(1_{**}^3)$ & $(1,0,0,0)$ & -\\
 \hline
 $(1_*^3)$ & $(1, 0, \ell, 0)$& $\ell \in p R^\times$\\
 \hline
 $(1_{\max}^3)$ & $(1, 0, k, -\ell)$ & $k \in pR, \ell \in p R^\times$\\
 \hline
\end{tabular}

\begin{lemma}
 Let $p>3$. Let $b \in pV_{p^2}$.  Write $b = pb'$, and regard $b'$ as an element of $V_p = \left(\zed/p\zed \right)^4$. Then
 \begin{equation}
  \hat{\Phi}_p(pb') = \left\{\begin{array}{lll}p^{-2} + p^{-3}-p^{-5}, && b' \text{ of type } (0)  \\ p^{-3}-p^{-5}, && b' \text{ of type } (1^3), (1^2 1) \\ -p^{-5}, && b' \text{ of type } (111), (21), (3)\end{array}\right.
 \end{equation}
 where the type refers to the factorization type of $b'$.
 
 For $b \in V_{p^2}\setminus p V_{p^2}$, 
 \begin{equation}
  \hat{\Phi}_p(b) = \left\{\begin{array}{lll} p^{-3}-p^{-5}, && b \text{ of type } (1^3_{**})\\ -p^{-5}, && b \text{ of type } (1_*^3), (1_{\max}^3)\\ 0, && \text{otherwise.}\end{array}\right.
 \end{equation}

\end{lemma}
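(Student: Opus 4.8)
The plan is to follow the strategy of Taniguchi and Thorne \cite{TT13a}: fix an explicit $\GL_2(\zed/p^2\zed)$-invariant description of $\Phi_p$, and then evaluate the finite character sum defining $\hat\Phi_p$ one $\SL_2(\zed/p^2\zed)$-orbit at a time. First I would record the structure of the non-maximal set. A cubic ring fails to be maximal at $p$ precisely when it is properly contained in another cubic ring over $\zed_p$, hence inside an index-$p$ overring, and the possible index-$p$ overrings are governed by finitely many local patterns; under the Delone--Fadeev--Gan--Gross--Savin correspondence these translate into a small union of $\GL_2(\zed/p^2\zed)$-orbits of forms mod $p^2$ --- essentially the forms whose content is divisible by $p$, together with the forms that acquire a triple root mod $p$ and a further divisibility of the leading coefficient mod $p^2$. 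In particular $\Phi_p$ is constant on $\GL_2(\zed/p^2\zed)$-orbits and is an alternating sum of a few orbit-indicators. A useful sanity check along the way is that $\hat\Phi_p(0) = p^{-8}\#\{f\bmod p^2 : R(f)\text{ non-maximal at }p\}$ must equal the known non-maximal density $1-(1-p^{-2})(1-p^{-3}) = p^{-2}+p^{-3}-p^{-5}$, which is exactly the value listed for $b=0$ (type $(0)$).

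Second, I would reduce to the tabulated representatives by Fourier equivariance. The finite analogue of \eqref{translation_dilation_action} shows that for $g\in\SL_2(\zed/p^2\zed)$ one has $\widehat{g\cdot\Phi_p} = g\cdot\hat\Phi_p$ (the factor $\chi(g)$ is trivial and $g^\iota = g$ on $\SL_2$, and $g$ acts on the finite set by a bijection); hence $\hat\Phi_p$ is constant on $\SL_2(\zed/p^2\zed)$-orbits of $V_{p^2}=(\zed/p^2\zed)^4$, which since $p>3$ is self-dual under the alternating pairing so that $b$ genuinely ranges over all of $V_{p^2}$. It therefore suffices to evaluate at one $b$ per orbit, organised as in the statement by whether $b\in pV_{p^2}$ --- then by the $V_p$-factorization type of $b'=b/p$, namely $(0),(1^3),(1^2 1),(111),(21),(3)$ --- or $b\notin pV_{p^2}$ --- then by the factorization type of the discriminant at $b$ mod $p$, which is where $(1^3_{**}),(1^3_*),(1^3_{\max})$ enter. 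For each such $b$ I would substitute into $\hat\Phi_p(b) = p^{-8}\sum_{a\in V_{p^2}}\Phi_p(a)\,e(\langle b,a\rangle/p^2)$. The contribution of the ``content divisible by $p$'' piece, after writing $a=pa'$, becomes $p^{-8}\sum_{a'\in V_p}e(\langle b,a'\rangle/p)$, a complete four-dimensional sum mod $p$ equal to $p^{-4}$ when $p\mid b$ and $0$ otherwise; the triple-root piece and the overlap I would compute by fibering the relevant orbits over the $\GL_2$-action along the root direction, which factors the character sum into a Ramanujan-type sum $\sum_{x\bmod p^j}$ in one coordinate, a quadratic Gauss sum $\sum_{x\bmod p^2}e(\alpha x^2/p^2)$ in another, and complete linear sums in the rest. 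Elementary evaluations of those then assemble to the stated values, with the case split reflecting whether a certain auxiliary congruence is solvable to precision $p$ or $p^2$.

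The part I expect to be the main obstacle is not the Fourier-theoretic bookkeeping but pinning $\Phi_p$ down to the correct mod-$p^2$ precision and partitioning its support into $\GL_2(\zed/p^2\zed)$-orbits exactly. Non-maximality at $p$ is a genuine mod-$p^2$ condition, which is precisely why the regimes $b\in pV_{p^2}$ and $b\notin pV_{p^2}$ behave differently: one must carry the mod-$p^2$ data of both $\Phi_p$ and the test character, not merely their reductions mod $p$. Within the triple-root cases the orbit of $b$, hence the value $\hat\Phi_p(b)$, is controlled by whether an auxiliary linear/quadratic equation is solvable to precision $p$ versus $p^2$, so those orbits must be separated with care; one also has to track stabilizer sizes (the order-$3$ stabilizer attached to $x_+$, trivial elsewhere) and the hypothesis $p>3$, which guarantees that $3$ is a unit, that the alternating pairing is perfect mod $p^2$, and that the quadratic forms occurring in the Gauss sums are non-degenerate. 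Once the orbit decomposition of the non-maximal set is in hand, the remaining character-sum evaluations are routine.
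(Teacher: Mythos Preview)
The paper does not prove this lemma at all: it is stated as a quotation from Taniguchi--Thorne \cite{TT13a}, introduced by the sentence ``The Fourier transform of $\Phi_p$ has been explicitly determined \ldots\ in \cite{TT13a}.'' Your proposal is therefore not to be compared against a proof in the paper but against the computation in \cite{TT13a} that the paper cites, and your outline is indeed that computation: identify the support of $\Phi_p$ as a union of $\GL_2(\zed/p^2\zed)$-orbits (content divisible by $p$, or a triple root mod $p$ with a further $p^2$-divisibility on the leading coefficient), use the equivariance $\widehat{g\cdot\Phi_p}=g\cdot\hat\Phi_p$ to reduce to one representative $b$ per orbit, and evaluate the resulting finite exponential sums by fibering over the root direction. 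Your sanity check $\hat\Phi_p(0)=p^{-2}+p^{-3}-p^{-5}$ as the non-maximal density is correct and is the $b'=(0)$ entry.

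One small correction of language: you write that for $b\notin pV_{p^2}$ the orbit is determined ``by the factorization type of the discriminant at $b$ mod $p$,'' but the types $(1^3_{**}),(1^3_*),(1^3_{\max})$ are all of discriminant $\equiv 0\bmod p$ (triple root); what distinguishes them is finer mod-$p^2$ data on the non-leading coefficients, exactly as in the table of representatives preceding the lemma. This is consistent with what you say later about solvability to precision $p$ versus $p^2$, so the substance of your plan is fine; just be careful that the case split among the $(1^3_\bullet)$ orbits is genuinely a mod-$p^2$ distinction, not a mod-$p$ one.
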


The above evaluations are used in the following way.

\begin{lemma}\label{FT_eval_lemma}
Let $p > 3$.  For $m, n \in \zed$, we have the evaluations
 \begin{align}
 \hat{\Phi}_p(0,0,0,m) &= \left\{\begin{array}{lll} p^{-2} + p^{-3} -p^{-5}, && p^2|m\\ p^{-3} -p^{-5}, && p^2\nmid m \end{array}\right.\\
 \notag \hat{\Phi}_p(0,0,m,n) &= \left\{\begin{array}{lll} p^{-2} + p^{-3} -p^{-5}, && p^2|(m,n)\\ p^{-3} -p^{-5}, && p^2\nmid (m,n), p|m\\ 0, && p\nmid m \end{array}\right..
\end{align}
\end{lemma}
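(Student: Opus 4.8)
The plan is to deduce each evaluation from the preceding lemma by identifying the $p$-adic type of the concrete dual forms in question. First I would pass from each dual form to the associated binary cubic form read modulo $p^2$: the form $(0,0,0,m)$ corresponds to $my^3$, and $(0,0,m,n)$ corresponds to $mxy^2+ny^3=y^2(mx+ny)$. Then I would split on whether every coordinate is divisible by $p$: if so, write the form as $p\,b'$ and read off the factorization type of $b'$ over $\zed/p\zed$; if not, locate it among the orbit types $(1^3_{**}),(1^3_*),(1^3_{\max})$ of the table, or show it is none of them, and quote the appropriate value from the preceding lemma.

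For $(0,0,0,m)$ the bookkeeping is immediate. If $p^2\mid m$ the form is $0$, of type $(0)$, giving $p^{-2}+p^{-3}-p^{-5}$. If $p\parallel m$ then $b'=(0,0,0,m/p)$ reduces mod $p$ to the nonzero cube $(m/p)y^3$, of type $(1^3)$, giving $p^{-3}-p^{-5}$. If $p\nmid m$ then the form is not in $pV_{p^2}$, and swapping $x$ and $y$ and rescaling carries $my^3$ to $x^3$, so it is of type $(1^3_{**})$, again giving $p^{-3}-p^{-5}$; this proves the first display. For $(0,0,m,n)$, the cases $p^2\mid(m,n)$ and ``$p\mid m$, $p\mid n$, not both divisible by $p^2$'' go the same way: in the first the form vanishes mod $p^2$; in the second $b'=(0,0,m/p,n/p)$ is nonzero mod $p$ and equals $y^2$ times a linear form either not proportional to $y$ (type $(1^2 1)$, when $p\nmid m/p$) or a nonzero multiple of $y$ (type $(1^3)$, when $p\mid m/p$), so in either case the lemma returns $p^{-3}-p^{-5}$.

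The one case that needs an idea is $(0,0,m,n)$ with $p\parallel m$ and $p\nmid n$: here the form lies outside $pV_{p^2}$, and although $y^2(mx+ny)$ superficially has a simple factor distinct from its double factor, over $\zed/p^2\zed$ it is actually a perfect cube of a primitive linear form. Concretely, put $\alpha=3^{-1}mn^{-1}\bmod p^2$ --- legitimate since $p>3$ --- so that $p\mid\alpha$, hence $\alpha^3\equiv 3\alpha^2\equiv 0\pmod{p^2}$ while $3\alpha\equiv mn^{-1}$, and then
\[
(\alpha x+y)^3=\alpha^3x^3+3\alpha^2x^2y+3\alpha xy^2+y^3\equiv n^{-1}(mxy^2+ny^3)\pmod{p^2}.
\]
Since $\alpha x+y$ is primitive, $mxy^2+ny^3=n(\alpha x+y)^3$ lies in the $\GL_2(\zed)$-orbit of $x^3$, i.e. is of type $(1^3_{**})$, so the lemma gives $p^{-3}-p^{-5}$; and $p\nmid n$ forces $p\nmid(m,n)$, so this is precisely the claimed value in the range ``$p^2\nmid(m,n)$, $p\mid m$''. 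Finally, if $p\nmid m$ the form reduces mod $p$ to $y^2(mx+ny)$ with $mx+ny$ not proportional to $y$, hence to a form of type $(1^2 1)$; as each of the three exceptional orbit types has a representative whose reduction mod $p$ is a cube of a linear form and reduction mod $p$ is $\GL_2(\zed)$-equivariant, the form belongs to none of them, so $\hat\Phi_p(0,0,m,n)=0$. The cube identity above is the only genuine obstacle; everything else is matching the two specific families against the list of types in the preceding lemma.
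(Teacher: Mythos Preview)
Your approach is identical to the paper's: identify the orbit type of each dual form modulo $p^2$ and read off the value from the preceding lemma; the paper's proof is terser and simply asserts that when $p\mid m$, $p\nmid n$ the form is $\GL_2(\zed/p^2\zed)$-equivalent to type $(1^3_{**})$, whereas you supply the explicit cube identity. One bookkeeping nit: you state the cube case as $p\parallel m$, $p\nmid n$, which nominally leaves the sub-case $p^2\mid m$, $p\nmid n$ unaddressed; but your identity with $\alpha=3^{-1}mn^{-1}$ goes through verbatim for any $p\mid m$ (if $p^2\mid m$ then $\alpha\equiv 0$ and the computation still yields $n^{-1}(mxy^2+ny^3)$), so replacing $p\parallel m$ by $p\mid m$ closes this.
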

\begin{proof}
If $p^2$ divides the form $b$, then $\hat{\Phi}_p(b) = p^{-2} + p^{-3} - p^{-5}$.  In the case of $(0,0,0,m)$, if $p|m$ but $p^2\nmid m$ then $b$ is of type $pb'$ with $b'$ of type $(1^3)$ modulo $p$.  If $p\nmid m$ then $b$ is of form $(1_{**}^3)$.  In either case, $\hat{\Phi}_p(0,0,0,m) = p^{-3}-p^{-5}$.  In the case of $(0,0,m,n)$, if $p \nmid m$ then $b\not \in p V_{p^2}$ and $b$ is not of type $(1^3)$ modulo $p$, so $b$ is not in the orbit of $(1_{**}^3), (1_*^3), (1_{\max}^3)$, so $\hat{\Phi}_p(b) = 0$. In the second case, if $p|m$ but $p \nmid n$ then $b$ is $\GL_2(\zed/p^2\zed)$ equivalent to a form in $(1^3_{**})$.  If $p$ divides $b$ but $p^2$ does not, then $b= pb'$ with $b'$ equivalent to a form of type either $(1^3)$, $(1^21)$.
\end{proof}

Extend $\Phi_p$ to $\Phi_q$ multiplicatively when $q = p_1p_2...p_n$ is square-free, 
\begin{equation}
\Phi_q(x) = \prod_{i=1}^n \Phi_{p_i}(x). 
\end{equation}
It follows from Proposition 4.11 of \cite{TT13a} that for square-free $q$, the Fourier transform also factors as a product,
\begin{equation}
 \hat{\Phi}_q(b) = \prod_{p|q} \hat{\Phi}_p(b).
\end{equation}

We quote the following consequence from \cite{TT13b}.

\begin{theorem}\label{FT_bound_theorem}
	Uniformly in $q$ and $Y$, for all $\epsilon > 0$,
	\begin{equation}
	\sum_{0 < |m| \leq Y}\sum_{i=1}^{\hat{h}(m)} \frac{\left|\hat{\Phi}_q(\hat{x}_{i,m})\right|}{\left|\hat{\Gamma}(i,m)\right|} \ll_\epsilon q^{-7+\epsilon}Y.
	\end{equation}
\end{theorem}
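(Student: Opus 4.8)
The plan is to split the weight $\hat\Phi_q$ into residue classes modulo $q^2$ and combine the explicit Fourier evaluations above with a count of dual binary cubic forms of bounded discriminant in a fixed congruence class that is uniform in the modulus. Since $q$ is coprime to $6$ we have $\hat L/q^2\hat L\cong V_{q^2}$, and since $|\hat\Phi_q|$ is $\Gamma$-invariant and depends only on the class modulo $q^2$,
\[
\sum_{0<|m|\le Y}\ \sum_{i=1}^{\hat h(m)}\frac{|\hat\Phi_q(\hat x_{i,m})|}{|\hat\Gamma(i,m)|}
\ =\ \sum_{b\in V_{q^2}}|\hat\Phi_q(b)|\,N(b,Y),
\]
where $N(b,Y)$ denotes the sum of $1/|\hat\Gamma_x|$ over $\Gamma$-orbits of dual forms $x\in\hat L$ with $x\equiv b\bmod q^2$ and $0<|D(x)|\le Y$. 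The essential point is then to bound the total mass $\sum_{b\in V_{q^2}}|\hat\Phi_q(b)|$. By Proposition 4.11 of \cite{TT13a} this factors as $\prod_{p\mid q}\big(\sum_{b\in V_{p^2}}|\hat\Phi_p(b)|\big)$, so it suffices to estimate the local sum. Using the table of values of $\hat\Phi_p$ above together with the count of binary cubic forms over $\zed/p\zed$ of each factorization type, one finds the local sum is dominated by the orbits whose reduction modulo $p$ is a perfect cube of a linear form: there are $\asymp p^4$ forms of type $(1^3_{**})$, each contributing $\asymp p^{-3}$, and $O(p^6)$ forms of types $(1^3_*)$ and $(1^3_{\max})$, each contributing $\asymp p^{-5}$, whereas the forms $b$ with $p\mid b$ contribute $O(1)$ in total. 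Hence $\sum_{b\in V_{p^2}}|\hat\Phi_p(b)|\ll p$, and therefore $\sum_{b\in V_{q^2}}|\hat\Phi_q(b)|\ll_\epsilon q^{1+\epsilon}$.

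Next I would record a support property that follows from the same table: whenever $\hat\Phi_p(b)\ne0$ one has $p^2\mid D(b)$. Indeed $p\mid b$ forces $p^4\mid D(b)$; the type $(1^3_*)$ representative $(1,0,\ell,0)$ with $\ell\in pR^\times$ has discriminant $-4\ell^3$, divisible by $p^3$; and the remaining types $(1^3_{**})$ and $(1^3_{\max})$ have representatives whose reduction modulo $p$ is a perfect cube, at which the gradient of $D$ vanishes, so that $D\equiv0\bmod p^2$ there (and the group action multiplies $D$ by a unit). Consequently $\hat\Phi_q(b)\ne0$ implies $q^2\mid D(b)$, so the sum in the theorem is empty once $q^2>Y$; this is what allows one to ignore the range $q>Y^{1/2}$.

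It remains to combine these inputs with the lattice-point estimate $N(b,Y)\ll_\epsilon q^{-8}Y+q^{\epsilon}E(Y)$, valid uniformly in $b$ and $q$, in which the main term has density $q^{-8}$ (the index of the relevant congruence sublattice) and $E(Y)$ collects the contributions of the cusp and of the boundary of the counting region. Substituting, the main term gives $q^{-8}Y\sum_{b}|\hat\Phi_q(b)|\ll_\epsilon q^{-7+\epsilon}Y$, which is the asserted bound, and the lower-order terms are disposed of in the same fashion using the restriction $q\le Y^{1/2}$. The main obstacle — and the reason the statement is quoted from \cite{TT13b} rather than proved here — is obtaining $E(Y)$ with a bound genuinely uniform in the modulus: Davenport's lemma degrades near the cusp of $\SL_2(\zed)\backslash\SL_2(\bR)$, i.e.\ for forms of very small leading coefficient, and controlling that region while keeping explicit track of the congruence modulo $q^2$ is the technical heart of the argument in \cite{TT13b}. (Equivalently, one may package $N(\cdot,Y)$ through the Shintani zeta function attached to the pair $(V_\bR,|\hat\Phi_q|)$ and read the bound off its pole at $s=1$, whose residue is $\asymp q^{-8}\sum_b|\hat\Phi_q(b)|$, the pole at $s=5/6$ and the error from the functional equation producing the lower-order terms.)
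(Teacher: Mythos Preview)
The paper does not actually prove this statement: its entire proof is the sentence ``This is stated as a consequence of Theorem 3.1 of \cite{TT13b} in eqn.\ (3.4), without including the factor of the inverse of the stabilizer, which can only decrease the sum.'' So there is nothing to compare on the paper's side beyond a citation.

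Your sketch is a reasonable reconstruction of the argument behind the cited result, and you correctly identify that the real content lies in \cite{TT13b}. Your local computation $\sum_{b\in V_{p^2}}|\hat\Phi_p(b)|\ll p$ is correct (the dominant contribution is indeed the $\asymp p^4$ forms of type $(1^3_{**})$, each of weight $\asymp p^{-3}$), and the support observation $\hat\Phi_q(b)\neq0\Rightarrow q^2\mid D(b)$ is also right. One small packaging issue: in your identity $\sum_b|\hat\Phi_q(b)|N(b,Y)$, individual residue classes $b\in V_{q^2}$ are not $\Gamma$-stable, so $N(b,Y)$ should be read as a lattice-point count in a fixed fundamental domain for $\Gamma$ (or you should group by $\SL_2(\mathbb{Z}/q^2\mathbb{Z})$-orbits in $V_{q^2}$); this is cosmetic rather than a real gap. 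As you yourself acknowledge, the uniform-in-$q$ error term in that lattice count (handling the cusp of the fundamental domain while tracking the congruence) is the technical heart and is precisely what is imported from \cite{TT13b}.
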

\begin{proof}
 This is stated as a consequence of Theorem 3.1 of \cite{TT13b} in eqn. (3.4), without including the factor of the inverse of the stabilizer, which can only decrease the sum.
\end{proof}

\section{Background regarding automorphic forms on $\GL_2$}
This section reviews the theory of automorphic forms based on the discussion in \cite{LRS08}.  We include a discussion of the right $K = \SO_2(\bR)$-type which gains a further parameter of equidistribution in Theorem \ref{cubic_field_theorem} compared to the earlier works \cite{T97} and \cite{BH16}. Here and in what follows, a form $\phi$ of right $K$-type $2k$ satisfies $\phi(gk_{\theta}) = e^{i 2k \theta} \phi(g)$, and the map $\theta \mapsto e^{i2k\theta}$ is called a character of degree $2k$ for $K$.  What we need from this theory is that the fact that a cusp form $\phi$ may be represented as a function on the modular surface $\Gamma \backslash \bH$ satisfying an exponential decay condition in the cusp, that it has a Fourier expansion in the parabolic direction, together with some estimates for the functions and coefficients appearing in the Fourier expansion.

  The space of mean-zero functions
\begin{equation}
L_0^2(\Gamma \backslash G^1) = \left\{f \in L^2(\Gamma \backslash G^1): \int_{\Gamma \backslash G^1}f(g) dg = 0 \right\}
\end{equation}
splits as
\begin{equation}
L_0^2(\Gamma \backslash G^1) = L^2_{\cusp}(\Gamma \backslash G^1) \oplus L^2_{\Eis}(\Gamma \backslash G^1)
\end{equation}
where 
\begin{equation}\label{def_cusp_form}
L_{\cusp}^2(\Gamma \backslash G^1) = \left\{f \in L^2_0(\Gamma \backslash G^1): \int_{(N' \cap \Gamma) \backslash N'}f(\nu x)d\nu = 0, a.e. \;x \in \Gamma \backslash G^1 \right\}
\end{equation}
and where $L_{\Eis}^2(\Gamma \backslash G^1)$ is spanned by the incomplete Eisenstein series.

\subsection{The cuspidal spectrum}
We follow the discussion of \cite{LRS08} Section 3.1. Let $\pi$ be an infinite dimensional unitary irreducible representation of $\SL_2(\bR)$ in a Hilbert space $\sH$, which factors through $\PSL_2(\bR)$, and let $\sH^{(K)}$ be the $K$-finite vectors in $\sH$.  Then $\sH^{(K)}$ is spanned by  orthogonal one dimensional subspaces $\sH_n$, $n$ even,  transforming under $K$ on the right by the character of degree $n$, the $K$-type.  These $\sH_n$ consist of smooth vectors and may be chosen to be joint eigenfunctions of the Casimir operator and of the Hecke algebra.

Let $\sH_n = \bC \phi_n$.  For some parameter $s$, one has the action of the Lie algebra given by
\begin{align}
d \pi(W)\phi_n &= in \phi_n\\\notag
d\pi (E^-)\phi_n &= (s + 1-n)\phi_{n-2}\\\notag
d\pi(E^+)\phi_n &= (s+1+n) \phi_{n+2}
\end{align}
where
\begin{equation}
H = \begin{pmatrix} 1&0\\0 &-1\end{pmatrix}, \qquad V = \begin{pmatrix}0 &1\\1 &0\end{pmatrix}, \qquad W = \begin{pmatrix} 0 &1\\ -1 &0\end{pmatrix}
\end{equation}
and $E^{\pm} = H \pm iV$. The operators $E^+$ and $E^-$ are called raising and lowering operators.

The classification now breaks into two cases.
\begin{enumerate}
	\item (Maass case) There is a $K$-invariant vector $\phi_0$, which is called the minimal vector.  In this case, $s \in i\bR$ and there is no highest or lowest $K$-type, so
	\begin{equation}
	\sH^{(K)} = \bigoplus_{n \in \zed} \sH_{2n}.
	\end{equation}
	\item (Holomorphic case) When $\sH$ has a lowest $K$-type $m_0$ one has $m_0 > 0$ even.  In this case, $s = m_0-1$ and 
	\begin{equation}
	d\pi(E^-)\phi_{m_0} = 0.
	\end{equation}
	One has
	\begin{equation}
	\sH^{(K)} = \bigoplus_{m = m_0, \text{even}}^\infty \sH_{m}.
	\end{equation}
	$\phi_{m_0}$ is called the lowest weight vector, or minimal vector.  
	
	When $\sH$ has a highest $K$-type $m_0$ one has $m_0 < 0$ even.  In this case, $s = -m_0 -1$ and
	\begin{equation}
	d\pi(E^+)\phi_{m_0} = 0.
	\end{equation}
	Now
	\begin{equation}
	\sH^{(K)} = \bigoplus_{m=-\infty, \text{even}}^{m_0} \sH_m
	\end{equation}
	$\phi_{m_0}$ is called the highest weight or maximal vector.
\end{enumerate}
We extend the automorphic form $\phi$ to a function on $G^+$ by requiring that $\phi$ be invariant under scaling. Since all of the forms which we work with have even $K$-type $2k$, it follows that 
\begin{equation}
 \phi(g) = \phi\left(\frac{g}{\det g} \right) = \phi\left(\begin{pmatrix} 0 & 1\\ -1 &0\end{pmatrix} (g^{-1})^t \begin{pmatrix} 0 &-1\\ 1 &0\end{pmatrix} \right) = (-1)^{k} \phi\left( (g^{-1})^t \right)
\end{equation}
and thus $\phi\left(g^{-1}\right) =(-1)^k \phi\left(g^t\right)$, see the discussion in \cite{LRS08} following Proposition 3.1.  
\subsubsection{Upper half plane model}
For this section, see \cite{LRS08} Section 4.

The representation spaces of $K$-type $k$ can be realized as automorphic functions of weight $k$ on the upper half plane $\bH$, $f: \bH \to \bC$, which satisfy the automorphy relation under fractional linear transformations given by 
\begin{equation}
f(\gamma z) = \left(\frac{cz+d}{|cz+d|}\right)^k f(z), \qquad \gamma = \begin{pmatrix} a&b\\ c&d\end{pmatrix} \in \Gamma
\end{equation}
with an exponential decay condition in the cusp.  To realize the weight $k$ holomorphic cusp forms in this model, multiply by $y^{\frac{k}{2}}$.

The Casimir operator is realized in this model as the weight $k$ Laplacian
\begin{equation}
\Delta_k = y^2 \left(\frac{\partial^2}{\partial x^2} + \frac{\partial^2}{\partial y^2}\right) - iky \frac{\partial}{\partial x}.
\end{equation}
Denote $W_{\kappa, \mu}$ the Whittaker function, which satisfies
\begin{equation}
W_{\kappa, \mu}(y) \sim y^\kappa e^{-\frac{y}{2}}, \qquad y \to \infty
\end{equation}
and as $y \downarrow 0$,
\begin{equation}
W_{\kappa, \mu}(y) \sim \frac{\Gamma(-2\mu)}{\Gamma(\frac{1}{2}-\mu-\kappa)}y^{\frac{1}{2} + \mu} + \frac{\Gamma(2\mu)}{\Gamma(\frac{1}{2}+\mu-\kappa)} y^{\frac{1}{2}-\mu}, \qquad \mu \neq 0.
\end{equation}
When $\mu = 0$, $W_{\kappa, 0}(y) \ll y^{\frac{1}{2}}\log y$ as $y \downarrow 0$.
For later reference, abbreviate  $f_k^{\pm}(z,s) = W_{\pm \frac{k}{2}, s-\frac{1}{2}}(4\pi y)e(\pm x)$, which is an eigenfunction of $\Delta_k$ with eigenvalue $\lambda = s(1-s)$. Note that the Whittaker function may be recovered from $f_k^{\pm}$ at imaginary argument,
\begin{equation}
 f_k^{\pm}(iy,s)= W_{\pm \frac{k}{2}, s-\frac{1}{2}}(4\pi y).
\end{equation}
This is used to obtain linear relations among the derivatives of the Whittaker functions via  raising and lowering operators below.

A weight $k$ Hecke-eigen-cusp-form $\phi$ of eigenvalue $\lambda = s(1-s)$ has a Fourier expansion
\begin{align}
\phi(z) &= \frac{1}{2}\sum_{n\neq 0} \frac{\rho_\phi(n)}{\sqrt{|n|}} W_{\frac{\sgn(n)k}{2}, s-\frac{1}{2}}(4\pi|n|y)e(nx) \\&\notag= \frac{1}{2} \sum_{n \neq 0} \frac{\rho_\phi(n)}{\sqrt{|n|}}f_k^{\sgn n}(z,s).
\end{align}
Note that the Fourier coefficients $\rho_\phi(n)$ differ from those in \cite{LRS08} by a factor of $|n|^{\frac{1}{2}}$.
In the case $k = 0$, $W$ is expressed in terms of the $K$-Bessel function by
\begin{equation}
 \frac{1}{2}W_{0, it}(4\pi x) = \sqrt{x} K_{it}(2\pi x).
\end{equation}
For consistency with the weight zero case, we define
\begin{equation}
 \frac{1}{2}W_{\kappa, it}(4\pi x) = \sqrt{x}\check{K}_{\kappa, it}(2\pi x)
\end{equation}
so that the Fourier development may be written in general
\begin{equation}
 \phi(z) = y^{\frac{1}{2}}\sum_{n\neq 0} \rho_\phi(n) \check{K}_{\frac{\sgn(n)k}{2}, s-\frac{1}{2}}(2\pi|n|y)e(nx).
\end{equation}

In the upper half plane model the raising and lowering operators are given by
\begin{align}
K_k &= \frac{k}{2} + y \left(i \frac{\partial}{\partial x} + \frac{\partial}{\partial y}\right), \qquad 
\Lambda_k = \frac{k}{2} + y\left(i \frac{\partial}{\partial x} - \frac{\partial}{\partial y}\right).
\end{align}
$K_k$ takes forms of weight $k$ to weight $k+2$, $\Lambda_k$ takes forms of weight $k$ to weight $k-2$.  These satisfy
\begin{equation}
K_k \Delta_k = \Delta_{k+2}K_k, \qquad \Lambda_k \Delta_k = \Delta_{k-2}\Lambda_k
\end{equation}
and operate on the Fourier expansion via
\begin{align}
&K_kf_k^+(z,s)=-f_{k+2}^+(z,s), \qquad K_k f_k^-(z,s) = \left(s + \frac{k}{2}\right)\left(1-s + \frac{k}{2}\right)f_{k+2}^-(z,s),\\ \notag
&\Lambda_kf_k^+(z,s) = -\left(s-\frac{k}{2}\right)\left(1-s -\frac{k}{2}\right) f_{k-2}^+(z,s), \qquad \Lambda_k f_k^-(z,s) = f_{k-2}^-(z,s).
\end{align}

The feature of $\check{K}$ which we will need is as follows.
\begin{lemma}\label{Mellin_transform_lemma}
In $\RE(s)>0$ the Mellin transform
\begin{equation}
 \widetilde{K}_{\kappa, it}(s) = \int_0^\infty \check{K}_{\kappa, it}(x) x^{s-1}dx
\end{equation}
is holomorphic.  For each fixed $\sigma >0$ and $A > 0$,
\begin{equation}
 |\tau|^A\left|\widetilde{K}_{\kappa, it}(\sigma + i\tau)\right| \to 0
\end{equation}
as $|\tau| \to \infty$.
\end{lemma}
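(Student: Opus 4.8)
The plan is to work directly from the identity $\check{K}_{\kappa,it}(x) = \tfrac{\sqrt{2\pi}}{2}\,x^{-1/2} W_{\kappa,it}(2x)$, so that both assertions reduce to the behavior of the Whittaker function $W_{\kappa,\mu}$ at its two ends. From the asymptotic $W_{\kappa,\mu}(y) \sim y^\kappa e^{-y/2}$ (which may be differentiated term by term) one gets that $\check{K}_{\kappa,it}$, together with all of its derivatives, decays exponentially as $x \to \infty$. At the origin I would invoke the convergent expansion of $W_{\kappa,it}$ in terms of the Kummer functions $M_{\kappa,\pm it}$: writing $\mu = it$ with $t$ real, when $t \neq 0$ this yields $W_{\kappa,it}(y) = y^{1/2+it} h_1(y) + y^{1/2-it} h_2(y)$ with $h_1,h_2$ entire, hence $\check{K}_{\kappa,it}(x) = x^{it}\tilde h_1(x) + x^{-it}\tilde h_2(x)$ with $\tilde h_1,\tilde h_2$ entire; when $t=0$ the degenerate case gives $\check{K}_{\kappa,0}(x) = g_1(x) + g_2(x)\log x$ with $g_1,g_2$ entire. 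In either case $\check{K}_{\kappa,it}$ is smooth on $(0,\infty)$, is $O(1+|\log x|)$ as $x \downarrow 0$, and decays exponentially at $\infty$; consequently $\check{K}_{\kappa,it}(x) x^{s-1}$ is absolutely integrable on $(0,\infty)$ locally uniformly for $\RE(s) > 0$, and holomorphy of $\widetilde{K}_{\kappa,it}$ there follows from Morera's theorem after interchanging the order of integration.

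For the rapid decay in the vertical direction I would iterate the operator $x\tfrac{d}{dx}$. It preserves the local structure just described: it carries $x^{\pm it}\tilde h(x)$, with $\tilde h$ entire, to $x^{\pm it}\bigl((\pm it)\tilde h(x) + x\tilde h'(x)\bigr)$, again of the same form with entire coefficient, and it carries $g_1(x) + g_2(x)\log x$ to a function of the same shape (the power of $\log$ does not increase); it also preserves exponential decay at $\infty$ up to polynomial factors. Hence for every integer $N \geq 0$ the function $\bigl(x\tfrac{d}{dx}\bigr)^N \check{K}_{\kappa,it}$ is again $O_N(1 + |\log x|)$ as $x \downarrow 0$ and $O_N\!\bigl(x^{C_N} e^{-x}\bigr)$ as $x \to \infty$, so that $C_N(\sigma) := \int_0^\infty \bigl|\bigl(x\tfrac{d}{dx}\bigr)^N \check{K}_{\kappa,it}(x)\bigr|\, x^{\sigma - 1}\, dx < \infty$ for every $\sigma > 0$.

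Next I would justify the integration by parts of \eqref{mellin_operator} on all of $(0,\infty)$: for $\RE(s) = \sigma > 0$ the boundary term $\bigl[\check{K}_{\kappa,it}(x) x^{s}\bigr]_0^\infty$ vanishes --- at $\infty$ by the exponential decay, at $0$ because $x^\sigma \to 0$ dominates the $O(1+|\log x|)$ growth --- and the same holds after each further application of $x\tfrac{d}{dx}$, by the bounds above. Iterating \eqref{mellin_operator} then shows that the Mellin transform of $\bigl(x\tfrac{d}{dx}\bigr)^N \check{K}_{\kappa,it}$ equals $(-s)^N \widetilde{K}_{\kappa,it}(s)$, so that $|s|^N \bigl|\widetilde{K}_{\kappa,it}(s)\bigr| \leq C_N(\sigma)$ on the line $\RE(s) = \sigma$. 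With $s = \sigma + i\tau$ this gives $\bigl|\widetilde{K}_{\kappa,it}(\sigma + i\tau)\bigr| \leq C_N(\sigma)\, |\tau|^{-N}$ for $|\tau| \geq 1$; choosing any integer $N > A$ then yields $|\tau|^A \bigl|\widetilde{K}_{\kappa,it}(\sigma + i\tau)\bigr| \leq C_N(\sigma)\,|\tau|^{A-N} \to 0$, which is the assertion.

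I expect the only genuinely delicate point to be the behavior at the origin: one needs more than the leading asymptotics of $W_{\kappa,it}$ there, namely the fact that $W_{\kappa,it}$ admits near $0$ an expansion with \emph{entire} coefficients of the shape above (including the logarithmic degenerate case $t = 0$), which is what guarantees that repeatedly applying $x\tfrac{d}{dx}$ leaves the function integrable against $x^{\sigma - 1}$ near $0$. Granting that structure, the convergence, the holomorphy, the integration-by-parts identity, and the final estimate are all routine.
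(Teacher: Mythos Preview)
Your argument is correct, but the route differs from the paper's. The paper does not integrate by parts; instead it reduces to the case $\kappa=0$ by observing that the raising and lowering operators $K_k$, $\Lambda_k$ applied repeatedly to $f_0^{\pm}(iy,s)$ express each $\check{K}_{\kappa,it}(x)$ as a finite linear combination of terms $x^a K_{it}^{(b)}(x)$ with $a\geq b$, where $K_{it}$ is the ordinary $K$-Bessel function. The Mellin transform of $K_{it}$ is explicitly $2^{s-2}\Gamma\!\bigl(\tfrac{s+it}{2}\bigr)\Gamma\!\bigl(\tfrac{s-it}{2}\bigr)$, whose rapid decay on vertical lines follows from Stirling; the decay of $\widetilde{K}_{\kappa,it}$ then follows immediately since differentiation and multiplication by $x$ only shift the Mellin variable.

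What each buys: the paper's approach is more structural---it exploits the representation-theoretic ladder already set up in the section and gives, in principle, an explicit closed form for $\widetilde{K}_{\kappa,it}$ as a polynomial combination of shifted Gamma products. Your approach is more self-contained: it needs only the Kummer expansion of $W_{\kappa,\mu}$ near $0$ and the asymptotic at $\infty$, and never invokes the explicit Bessel Mellin transform or the raising/lowering machinery. Both handle the holomorphy step the same way, via the endpoint asymptotics. Your care with the degenerate case $t=0$ (the $\log$ term not proliferating under $x\tfrac{d}{dx}$) is appropriate; the paper simply subsumes this in the reduction to $K_{it}$.
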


\begin{proof}
 The holomorphicity follows from the fact that the Whittaker functions are smooth and from the asymptotic behavior at 0 and $\infty$.  When $k = \kappa = 0$, the $K$-Bessel function has Mellin transform
 \begin{equation}
  \int_0^\infty K_{it}(x)x^{\sigma + i\tau-1} dx = 2^{\sigma + i\tau - 2} \Gamma\left(\frac{\sigma + i\tau + it}{2} \right) \Gamma\left(\frac{\sigma + i\tau -it}{2} \right),
 \end{equation}
which satisfies the decay condition due to the decay of the Gamma function in vertical strips.  For other even $k$, applying the raising or lowering operators $\frac{k}{2}$ times to the functions $f_*^{\pm}(iy, s)$ expresses $\check{K}_{\kappa, it}(x)$ as a linear combination of terms of the type $x^a K^{(b)}_{it}(x)$, $a \geq b$ involving powers of $x$ and derivatives of the $K$-Bessel function.  This expresses $\widetilde{K}_{\kappa, it}(s)$ as a bounded linear combination of Mellin transforms of the $K$-Bessel function, from which the claimed decay follows.
 
\end{proof}

Assume that $\phi$ is Hecke-normalized, so that $\rho_\phi(1) = 1$. 
Then the Fourier coefficients satisfy the Hecke multiplicativity relation
\begin{equation}
 \rho_\phi(m)\rho_\phi(n) = \sum_{d |(m,n)}\rho_\phi\left(\frac{mn}{d^2} \right).
\end{equation}
$\rho_\phi(n)$ is the eigenvalue of the $n$th Hecke operator $T_n$. Note that the Casimir eigenvalue, Hecke eigenvalues and weight $k$ are sufficient to recover the form $\phi$ via the Fourier expansion (multiplicity 1).
Attached to the form $\phi$ is the $L$-function $L(s, \phi)$,
\begin{equation}
 L(s,\phi) = \sum_{n=1}^\infty \frac{\rho_\phi(n)}{n^s}, \qquad \RE(s)>1.
\end{equation}
This extends to an entire function and is given by an absolutely convergent Euler product in $\RE(s)>1$.

\subsection{Spherical kernels} 
Say that a function $f$ on $\SL_2(\bR)$ transforms on the left, resp. right by a character of $\SO_2(\bR)$ of degree $2k$ if, for all $g \in \SL_2(\bR)$, $f(k_\theta g) = e^{i2k\theta} f(g)$, resp. $f(g k_\theta) = e^{i2k\theta}f(g)$.  A function which transforms on the left by a character of degree $2k_1$ and on the right by a character of degree $2k_2$ is said to be spherical of $K$-type $(2k_1, 2k_2)$.  

\begin{lemma}
 Let $\phi$ be a Hecke-eigen cusp form transforming on the right by a character of $\SO_2(\bR)$ of degree $2k$.  Let $f$ be defined on $\SL_2(\bR)$, smooth, and of compact support, and spherical of $K$-type $(2k,2k)$.  There is a constant $\Lambda_{f, \phi}$ depending on $f$ and $\phi$ such that, for all $g_0 \in \SL_2(\bR)$
 \begin{equation}
  \int_{\SL_2(\bR)} f(gg_0)\phi\left(g^{-1}\right) dg = \Lambda_{f,\phi}\phi(g_0).
 \end{equation}
Moreover, for an appropriate choice of $f$, $\Lambda_{f,\phi} \neq 0$.
\end{lemma}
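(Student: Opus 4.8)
\emph{Proof plan.}

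The plan is to recognize the operator $R_f \colon \psi \mapsto \bigl( g_0 \mapsto \int_{G^1} f(g g_0)\psi(g^{-1})\,dg \bigr)$ as belonging to the integrated right regular representation, and then to apply Schur's lemma together with the multiplicity‑one structure of the cuspidal spectrum of $\Gamma\backslash G^1$. Let $\rho$ denote the right regular representation of $G^1$ on $L^2(\Gamma\backslash G^1)$, and for $\varphi\in C_c^\infty(G^1)$ write $\rho(\varphi)=\int_{G^1}\varphi(u)\,\rho(u)\,du$. Setting $\tilde f(g)=f(g^{-1})$ and performing the measure‑preserving substitutions $g=h^{-1}$ and then $h=g_0 u$ (here $G^1$ is unimodular) in the defining integral, one obtains
\begin{equation}
 \int_{G^1} f(g g_0)\,\phi(g^{-1})\,dg \;=\; \int_{G^1}\tilde f(u)\,\phi(g_0 u)\,du \;=\; \bigl(\rho(\tilde f)\phi\bigr)(g_0).
\end{equation}
Since $f$, hence $\tilde f$, is smooth of compact support, $\rho(\tilde f)$ is a bounded operator on $L^2(\Gamma\backslash G^1)$ preserving every closed $\rho$‑invariant subspace; in particular it preserves the subrepresentation $\sH\subseteq L^2_{\cusp}(\Gamma\backslash G^1)$ generated by $\phi$ under right translation. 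By multiplicity one $\sH$ is irreducible, and by the $K$‑type decomposition recalled above every $\SO_2(\bR)$‑type occurs in $\sH$ at most once; writing $\sH_{2k}$ for the subspace on which $\rho(k_\theta)$ acts by $e^{i2k\theta}$, we therefore have $\sH_{2k}=\bC\phi$.

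Next I would use the right $\SO_2(\bR)$‑covariance of $f$. Because $f(gk_\theta)=e^{i2k\theta}f(g)$, the reflected function satisfies $\tilde f(k_\theta u)=e^{-i2k\theta}\tilde f(u)$, and substituting $u\mapsto k_{-\theta}u$ in the integral defining $\rho(\tilde f)$ gives $\rho(k_\theta)\rho(\tilde f)v=e^{i2k\theta}\rho(\tilde f)v$ for all $v$. Hence $\rho(\tilde f)$ maps $\sH_{2k}$ into itself, and since $\sH_{2k}=\bC\phi$ is one dimensional there is a scalar $\Lambda_{f,\phi}$ with $\rho(\tilde f)\phi=\Lambda_{f,\phi}\phi$. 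Combined with the identity of the previous paragraph, this is exactly the asserted relation $\int_{G^1} f(gg_0)\phi(g^{-1})\,dg=\Lambda_{f,\phi}\,\phi(g_0)$, valid for all $g_0\in G^1$.

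For the non‑vanishing I would produce a specific $f$ by an approximate‑identity construction adapted to the bi‑$K$‑type $(2k,2k)$. Fix $\psi_\epsilon\in C_c^\infty(G^1)$ with $\psi_\epsilon\geq 0$, $\int_{G^1}\psi_\epsilon=1$, and support shrinking to $\{1\}$ as $\epsilon\downarrow 0$, and set
\begin{equation}
 f_\epsilon(g)=\frac{1}{(2\pi)^2}\int_0^{2\pi}\!\int_0^{2\pi} e^{-i2k(\alpha+\beta)}\,\psi_\epsilon(k_\alpha\, g\, k_\beta)\,d\alpha\,d\beta .
\end{equation}
One checks directly that $f_\epsilon$ is smooth, compactly supported, and spherical of $K$‑type $(2k,2k)$, so the first part applies. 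To evaluate $\Lambda_{f_\epsilon,\phi}$ in the limit, unfold the two $K$‑averages inside $\int_{G^1} f_\epsilon(gg_0)\phi(g^{-1})\,dg$, substitute to concentrate $\psi_\epsilon$ at the identity, and use continuity of $\phi$ together with its right $K$‑type $2k$: the inner integral over $G^1$ then tends, as $\epsilon\downarrow 0$, to $e^{i2k(\alpha+\beta)}\phi(g_0)$, so the characters cancel and $\int_{G^1} f_\epsilon(gg_0)\phi(g^{-1})\,dg\to\phi(g_0)$. Choosing $g_0$ with $\phi(g_0)\neq 0$ gives $\Lambda_{f_\epsilon,\phi}\to 1$, so $\Lambda_{f_\epsilon,\phi}\neq 0$ for all small $\epsilon>0$, which is the last assertion.

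The change of variables, the boundedness of $\rho(\tilde f)$ and its invariance of $\sH$, and the check that $f_\epsilon$ lies in the right symmetry class are all routine, as is the appeal to multiplicity one and the $K$‑type structure. I expect the one point requiring genuine care to be the non‑vanishing: one must ensure that an approximate identity can be kept inside the constrained class of $(2k,2k)$‑spherical smooth compactly supported functions, and that the extra characters introduced by the two $K$‑averages are precisely cancelled, in the limit, by the right $\SO_2(\bR)$‑type of $\phi$.
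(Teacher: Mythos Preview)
Your argument is correct and close in spirit to the paper's, but the organization differs in a way worth noting. Both proofs recognize the integral as $\rho(\tilde f)\phi$ via the substitution $g\mapsto g_0 g^{-1}$. For the first part, the paper then verifies directly that $F\phi$ is $\Gamma$-invariant, has right $K$-type $2k$, and shares the Casimir and Hecke eigenvalues of $\phi$ (by differentiating or applying $T_n$ under the integral), concluding $F\phi\in\bC\phi$ by strong multiplicity one. You instead argue that $\rho(\tilde f)$ preserves the closed invariant subspace $\sH$ generated by $\phi$, which is irreducible (this is where multiplicity one enters for you), and then use the right $K$-type of $f$ to force $\rho(\tilde f)\phi\in\sH_{2k}=\bC\phi$; this bypasses any explicit check of Casimir or Hecke eigenvalues. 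Your route is slightly cleaner representation-theoretically, while the paper's is more hands-on and perhaps more transparent for a reader unfamiliar with the abstract structure. For the non-vanishing, both proofs use the same approximate-identity idea: the paper phrases it via the $KAK$ decomposition (take $f$ supported near $K$, nonnegative on $A$, and observe that at $t=1$ the two $K$-integrals cancel the characters), while you build the test function by bi-$K$-averaging a bump $\psi_\epsilon$ against the characters; your computation that $\rho(\tilde f_\epsilon)\phi(g_0)\to\phi(g_0)$ is correct and is exactly the paper's observation unwound.
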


\begin{proof}
 Define $F\phi(g_0) = \int_{\SL_2(\bR)} f(gg_0)\phi(g^{-1}) dg.$  Note that the right $K$-type of $F\phi$ is $2k$ by passing the transformation under the integral and applying the right $K$-type of $f$.  After a change of variable,
 \begin{equation}
  F\phi(g_0) = \int_{\SL_2(\bR)} f(h)\phi(g_0 h^{-1}) dh.
 \end{equation}
This representation shows that $F\phi$ is left invariant under $\SL_2(\zed)$.
By applying the Hecke operators and Casimir operator to $\phi$ under the integral, it follows that $F\phi$ has the same Hecke and Casimir eigenvalues as $\phi$, and hence, by strong multiplicity 1, that $F\phi = \Lambda_{f,\phi} \phi$ for some constant $\Lambda_{f,\phi}$, see \cite{G06}.  To demonstrate that the constant can be taken non-zero, write $h = k_{\theta_1}a_tk_{\theta_2} = k_{\theta_1}k_{\theta_2}(a_t)^{k_{\theta_2}} $, where $(a_t)^{k_{\theta_2}} = k_{-\theta_2}a_t k_{\theta_2}$.  When $t = 1$, integrating in $\theta_1, \theta_2$ evaluates to 1, since $(a_1)^{k_{\theta_2}} = a_1$ and $\phi$ transforms on the right by the same character of $\SO_2(\bR)$ as $f$ does on the left.  Hence if the support of $f$ is sufficiently close to $\SO_2(\bR)$, and $f$ is non-negative on $A$, then it can be arranged that $F\phi \neq 0$. 

\end{proof}

\section{The orbital integral representation}
We now introduce the main analytic objects of study, which are generating functions for cubic rings which are non-maximal at all primes $p|q$, and which are twisted by an automorphic cusp form $\phi$.  The orbital integral representation given here was given in \cite{H17} and is a modification of the construction of \cite{S72}.

Let $f$ be a function supported on either $V_+$ or $V_-$.  We assume that $f$ takes the form
\begin{equation}
 f(x) = f_D(|\Disc(x)|) \sum_{g \in G^+: g \cdot x_{\pm}= x} f_G\left(g \right).
\end{equation}
where $f_G$ is spherical of $K$-type $(2k,2k)$ on $\SL_2(\bR)$, extended to $\GL_2(\bR)$ as invariant under multiplication by a scalar, and $f_D$ is a function on $\bR^\times$.

Assume that $f$ is smooth and compactly supported.  As a result, the Fourier transform $\hat{f}$ is Schwarz class.  
Let $\phi$ be an automorphic form on $\SL_2(\zed)\backslash \SL_2(\bR)$, and for square-free $q$,   define the $q$-non-maximal orbital $L$-function by
\begin{equation}
 Z_q^{\pm}(f, \phi, L; s) = \int_{G^+ / \Gamma} \chi(g)^s \phi\left(g^{-1}\right) \sum_{x \in L} \Phi_q(x)f(g\cdot x) dg,
\end{equation}
where $\chi(g) = (\det g)^6$.

Introduce the twisted Shintani $\sL$-functions, defined for $\RE(s) > 1$ by
\begin{align}
 \sL_q^+(\phi, s) &= \sum_{m=1}^\infty \frac{1}{m^s} \sum_{g_0 \in I_{x_+}}\sum_{i=1}^{h(m)}\Phi_q(x_{i,m})\frac{\phi(g_{i,m}g_0)}{|\Gamma(i,m)|}, \\ \notag
 \sL_q^-(\phi, s) &= \sum_{m=1}^\infty \frac{1}{m^s} \sum_{i=1}^{h(-m)} \Phi_q(x_{i,-m}) \phi(g_{i,-m}).
\end{align}
These functions are well-defined, since $\Phi_q$ is $\Gamma$-invariant; the series are absolutely convergent in $\RE(s)>1$ since $\phi$ and $\Phi_q$ are bounded and the original Shintani zeta functions, which are obtained by omitting these factors, converge absolutely in $\RE(s)>1$, \cite{S72}.

The twisted $\sL$-functions may be recovered from the orbital representation as follows. 
\begin{lemma}
In $\RE(s) >1$, 
 \begin{align}
  Z^{\pm}_q(f, \phi, L; s) &=  \frac{\Lambda_{f_G, \phi}}{12} \sL_q^{\pm}(\phi, s) \tilde{f}_D(s).
 \end{align}

\end{lemma}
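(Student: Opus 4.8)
The plan is to compute the orbital integral $Z_q^{\pm}(f,\phi,L;s)$ directly by unfolding the sum over the lattice $L$ along $\GL_2$-orbits, reducing the group integral via the spherical kernel lemma, and finally recognizing the resulting Dirichlet series as the twisted Shintani $\sL$-function. First I would split $\sum_{x \in L} \Phi_q(x) f(g \cdot x)$ according to the discriminant of $x$: since $f$ is supported on $V_{\pm}$, only forms $x$ of discriminant $m$ with $\sgn m = \pm 1$ contribute, and for each nonzero $m$ the integral forms of discriminant $m$ break into the $h(m)$ orbits $\Gamma \cdot x_{i,m}$ with $x_{i,m} = g_{i,m} \cdot x_{\sgn m}$. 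Because $\Phi_q$ is $\Gamma$-invariant and $\chi(g)^s$ only depends on $\det g$, I would rewrite the integral over $G^+/\Gamma$ as a sum over $i$ and $m$ of an integral over $G^+/\Gamma(i,m)$, using the standard orbit-counting/unfolding identity; this is where the stabilizer factor $|\Gamma(i,m)|^{-1}$ enters and where, in the positive-discriminant case, the extra sum over $g_0 \in I_{x_+}$ (the order-$3$ stabilizer in $G^+$, which is why the factor is $\frac{1}{12}$ rather than $\frac{1}{3}$ — combining $I_{x_+}$ with the $\pm 1$ scalars) appears.

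Next I would substitute the assumed form $f(x) = f_D(|\Disc x|)\sum_{g: g\cdot x_{\pm} = x} f_G(g)$ and factor the discriminant dependence out of the group integral: on the orbit of $x_{i,m}$, writing $g \cdot x_{i,m} = g g_{i,m} \cdot x_{\pm}$, the discriminant of $g g_{i,m} \cdot x_{\pm}$ is $\chi(gg_{i,m}) \cdot (\pm 1) = (\det g)^6 (\det g_{i,m})^6 (\pm 1)$, so $f_D(|\Disc(g\cdot x_{i,m})|)$ combined with $\chi(g)^s$ and the $\lambda$-integral in the $G^+ = \bR^+ \times G^1$ decomposition produces exactly a Mellin transform. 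After changing variables $\lambda \mapsto |\Disc|^{1/6}$ (up to constants absorbed into the normalization of Haar measure as specified in the excerpt), the $\lambda$-integral yields $\tilde f_D(s)$ times $m^{-s}$, leaving an integral over $G^1$ against $f_G$ and $\phi(g^{-1})$.

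The remaining $G^1$-integral is precisely $\int_{\SL_2(\bR)} f_G(g g_0) \phi(g^{-1})\,dg = \Lambda_{f_G,\phi}\,\phi(g_0)$ by the spherical-kernel lemma, applied with $g_0 = g_{i,m} g_0'$ (where $g_0'$ runs over $I_{x_+}$ in the $+$ case and is trivial in the $-$ case); this collapses the group integral to $\Lambda_{f_G,\phi}\,\phi(g_{i,m}g_0')$. Assembling everything, the sum over $m$, $i$, and (in the $+$ case) $g_0' \in I_{x_+}$, weighted by $\Phi_q(x_{i,m})/|\Gamma(i,m)|$ and $m^{-s}$, is by definition $\sL_q^{\pm}(\phi,s)$, and the prefactors combine to $\frac{\Lambda_{f_G,\phi}}{12}$. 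Convergence in $\RE(s) > 1$ is inherited from absolute convergence of the untwisted Shintani zeta function together with boundedness of $\phi$ and $\Phi_q$, which justifies all the interchanges. The main obstacle is the bookkeeping of constants: correctly tracking the relation between the $\GL_2$-twisted action on forms versus the action on the basis $\langle 1,\omega,\theta\rangle$ (differing by $\det g$), the normalization of Haar measure on $G^+$ and $G^1$, the order-$3$ stabilizer $I_{x_+}$ together with the $\{\pm I\}$ ambiguity in passing between $\GL_2$ and $\PGL_2$, and the factor $\chi(g) = (\det g)^6$ when converting the dilation integral into the Mellin transform $\tilde f_D(s)$ — these must conspire to give exactly $\frac{1}{12}$, and verifying this requires care even though each individual step is routine.
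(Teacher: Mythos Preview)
Your approach matches the paper's: decompose the sum over $L$ into $\Gamma$-orbits, unfold to an integral over $G^+$, factor $G^+ = \bR_{>0} \times G^1$, apply the spherical-kernel lemma on the $G^1$ piece, and recognize the scalar integral as a Mellin transform.

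One correction on bookkeeping: your attribution of the constant $\tfrac{1}{12}$ to ``$I_{x_+}$ combined with $\pm 1$ scalars'' is off. The sum over $g_0 \in I_{x_+}$ arises from the fiber in the definition $f(x) = f_D(|\Disc x|)\sum_{g:\, g\cdot x_\pm = x} f_G(g)$ and is absorbed directly into the definition of $\sL_q^+(\phi,s)$; it contributes no divisor. (Also, $-I$ acts as $-1$ on cubic forms, so it does not stabilize $x_\pm$; and note that the same $\tfrac{1}{12}$ appears in the $-$ case where $I_{x_-}$ is trivial, so stabilizers cannot be the source.) The $\tfrac{1}{12}$ comes entirely from the radial integral: with $g = \lambda g_1$, $g_1 \in G^1$, one has $\chi(g) = \lambda^{12}$, so
\[
\int_0^\infty \lambda^{12s} f_D(\lambda^{12} m)\,\frac{d\lambda}{\lambda} \;=\; \frac{1}{12}\, m^{-s}\, \tilde f_D(s)
\]
after the substitution $u = \lambda^{12} m$. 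You gesture at this in your final paragraph, and indeed the exponent $12$ in $\chi(g) = (\det g)^6 = \lambda^{12}$ is the whole story for the $\tfrac{1}{12}$.
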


\begin{proof}
 Write
 \begin{align}
  &Z^+_q(f, \phi, L;s) = \int_{g \in G^+/\Gamma} \chi(g)^s \phi\left(g^{-1}\right) \sum_{x \in L} \Phi_q(x) f(g \cdot x) dg\\ \notag
  &=\int_{G^+/\Gamma} \chi(g)^s \phi\left(g^{-1}\right) \sum_{m=1}^\infty \sum_{i=1}^{h(m)}\frac{\Phi_q(x_{i,m})}{|\Gamma(i,m)|} \sum_{\gamma \in \Gamma} f(g \gamma  g_{i,m} \cdot x_+)dg.
  \end{align}
  Unfold the sum over $\Gamma$ and the integral to obtain
  \begin{align}    
  &Z^+_q(f, \phi, L;s)= \int_{G^+}\sum_{m=1}^\infty \sum_{i=1}^{h(m)} \frac{\Phi_q(x_{i,m})}{|\Gamma(i,m)|}  \chi(g)^s\phi\left(g^{-1}\right)\sum_{g_0\in I_{x_+}} f_G(g g_{i,m}g_0) f_D(\chi(g) m) dg\\ 
  \notag &= \sum_{m=1}^\infty \sum_{i=1}^{h(m)} \frac{\Phi_q(x_{i,m})}{|\Gamma(i,m)|} \int_{G^1}\phi(g^{-1}) \sum_{g_0 \in I_{x_+}} f_G(g  g_{i,m}  g_0) dg \int_{\bR^\times} \lambda^{12 s} f_D(\lambda^{12}m) \frac{d\lambda}{\lambda}.
  \end{align}
  The exchange of order of summation and integration is justified by the compact support of the test function, which makes the integral over $G^1$ bounded in $L^1$, so that the method of Shintani \cite{S72} applies.  Since $f_G$ is conjugation invariant, integration in $g$ may be interpreted as left convolution
  \begin{align}
  \int_{G^1} f_G(gg_{i,m}g_0) \phi\left(g^{-1}\right) dg &= \int_{G^1}f_G(g_{i,m}g_0 g) \phi\left(g^{-1}\right)dg \\\notag&= \int_{G^1}f_G\left(g_{i,m}g_0 g^{-1} \right)\phi(g)dg
  \\\notag&= \Lambda_{f_G,\phi} \phi(g_{i,m}g_0).
  \end{align}
  Evaluating both integrals obtains
  \begin{align}
    Z^+_q(f, \phi, L;s)&= \frac{\Lambda_{f_G, \phi}}{12} \sL^+_q(\phi, s) \tilde{f}_D(s)
 \end{align}
which gives the claimed formula.  The proof in the case of $Z^-$ is similar.
\end{proof}

Introduce the truncated orbital functions
\begin{align}
 Z^{\pm, +}_q(f, \phi, L; s) &= \int_{\substack{G^+/\Gamma\\ \chi(g) \geq 1}} \chi(g)^s \phi\left(g^{-1}\right) \sum_{x \in L}\Phi_q(x) f(g\cdot x) dg\\ \notag
 \hat{Z}^{\pm, +}_q(\hat{f}, \phi, \hat{L};1-s) &= \int_{\substack{G^+/\Gamma\\ \notag \chi(g) \geq 1}} \chi(g)^{1-s} \phi\left(g^{-1}\right) \sum_{x \in \hat{L}\setminus \hat{L}_0} \hat{\Phi}_q(x) \hat{f}\left(g \cdot \frac{x}{q^2} \right)dg
\end{align}
and the singular part
\begin{equation}
Z^{\pm, 0}_q(\hat{f}, \phi, \hat{L}; s) =\int_{\substack{G^+/\Gamma\\ \chi(g) \leq 1}} \chi(g)^{s-1} \phi\left(g^{-1}\right) \sum_{x \in \hat{L}_0} \hat{\Phi}_q(x) \hat{f}\left(g^\iota \cdot\frac{x}{q^2}\right) dg.
\end{equation}

Note that $Z_q^{\pm, +}$ and $\hat{Z}_q^{\pm, +}$ are entire due to the compact support of $f$ and the rapid decay of $\hat{f}$.

\begin{lemma}[Split functional equation]
 In $\RE(s)>1$,
 \begin{align}
  Z_q^{\pm}(f, \phi, L;s) &= Z_q^{\pm, +}(f, \phi, L; s) + \hat{Z}_q^{\pm, +}(\hat{f}, \phi, \hat{L}; 1-s)+ Z^{\pm, 0}_q(\hat{f}, \phi, \hat{L}; s).
 \end{align}

\end{lemma}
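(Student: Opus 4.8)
The plan is to start from the definition of $Z_q^{\pm}(f,\phi,L;s)$ as an integral over $G^+/\Gamma$, split the domain according to whether $\chi(g) \geq 1$ or $\chi(g) \leq 1$, and apply Poisson summation on the region where $\chi(g) \leq 1$ to convert the sum over $L$ into a sum over the dual lattice $\hat L$. Concretely, write
\begin{equation}
 Z_q^{\pm}(f,\phi,L;s) = \int_{\substack{G^+/\Gamma\\ \chi(g)\geq 1}} + \int_{\substack{G^+/\Gamma\\ \chi(g)\leq 1}},
\end{equation}
the first of which is exactly $Z_q^{\pm,+}(f,\phi,L;s)$. For the second integral, the integrand involves $\sum_{x\in L}\Phi_q(x) f(g\cdot x)$; since $\Phi_q$ is $\Gamma$-invariant and constant on residue classes modulo $q^2$, and since $f$ is Schwartz (indeed compactly supported), one applies Poisson summation in the form given in the excerpt. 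The subtlety is that $\Phi_q$ is a congruence weight modulo $q^2$, so one first decomposes $L = \bigsqcup_{c} (q^2 L + c)$ over residues $c$, applies Poisson summation to each smooth piece $f(g\cdot(q^2 y + c))$, and reassembles using the transformation rule \eqref{translation_dilation_action}; this produces the weighted sum $\sum_{\xi\in\hat L}\hat\Phi_q(\xi)\,\hat f(g^\iota\cdot(\xi/q^2))$ up to the Jacobian factor $q^{-8}\chi(g)^{-1}$, which combines with $\chi(g)^{s}$ to give $\chi(g)^{s-1}$ after the identity $\chi(g^\iota)=\chi(g)^{-1}$.

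Next I would split the resulting dual sum $\sum_{\xi\in\hat L}$ into the singular part $\xi\in\hat L_0$ and the non-singular part $\xi\in\hat L\setminus\hat L_0$. The non-singular part, still integrated over $\{\chi(g)\leq 1\}$, is converted to an integral over $\{\chi(g)\geq 1\}$ by the substitution $g\mapsto g^\iota$ (equivalently $g \mapsto (g^{-1})^t$ up to the center), using that $\chi(g^\iota) = \chi(g)^{-1}$, that the pairing satisfies $\langle x,y\rangle = \langle g\cdot x, g^\iota\cdot y\rangle$, and that $\phi((g^\iota)^{-1})$ relates to $\phi(g^{-1})$ via the weight-$k$ symmetry $\phi(g^{-1}) = (-1)^k\phi(g^t)$ recorded in the excerpt — one must check the automorphic form's behaviour is compatible with $f_G$ being spherical of $K$-type $(2k,2k)$, so that the reflected integrand is precisely that defining $\hat Z_q^{\pm,+}(\hat f,\phi,\hat L;1-s)$. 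The singular part $\xi \in \hat L_0$ on $\{\chi(g)\leq 1\}$ is left as is and is by definition $Z_q^{\pm,0}(\hat f,\phi,\hat L;s)$.

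Assembling the three pieces gives the claimed identity in $\RE(s)>1$, where the manipulations are all legitimate because the compact support of $f$ and the Schwartz decay of $\hat f$ make every integral and sum absolutely convergent; this is exactly the convergence regime in which Shintani's original unfolding argument applies, as already invoked in the preceding lemma. The main obstacle I anticipate is bookkeeping rather than conceptual: tracking the congruence weight $\Phi_q$ through Poisson summation (decomposing over residues mod $q^2$, extracting the correct power $q^{-8}$ and the phase $e(\langle x,a\rangle/q^2)$ from \eqref{translation_dilation_action}, and matching the outcome to the definition of $\hat\Phi_q$) while simultaneously keeping the correct power of $\chi(g)$ and the correct sign $(-1)^k$ from the $K$-type of $\phi$ under the involution $g\mapsto g^\iota$. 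One also needs to confirm that the $\xi=0$ term from Poisson summation is absorbed correctly — it contributes to the $\hat L_0$ piece — and that no boundary term arises at $\chi(g)=1$, which holds because both truncated integrals are entire (as noted in the excerpt) and the splitting at $\chi(g)=1$ introduces no discontinuity.
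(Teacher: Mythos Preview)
Your proposal is correct and follows essentially the same route as the paper: split at $\chi(g)=1$, apply Poisson summation on the small-$\chi$ region using the residue decomposition mod $q^2$ and formula \eqref{translation_dilation_action}, then separate $\hat L_0$ from $\hat L \setminus \hat L_0$ and substitute $g \mapsto g^\iota$ in the latter. One small simplification: the change of variable $g \mapsto g^\iota$ requires no $(-1)^k$ bookkeeping here, since $g^\iota = g/\det g$ and $\phi$ was extended to $G^+$ to be scale-invariant, so $\phi((g^\iota)^{-1}) = \phi(g^{-1})$ directly; the weight-$k$ symmetry is only invoked later in the singular analysis.
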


\begin{proof}
  In the orbital integral representation
  \begin{equation}
   Z_q^{\pm}(f, \phi, L; s) = \int_{G^+ / \Gamma} \chi(g)^s \phi\left(g^{-1}\right) \sum_{x \in L} \Phi_q(x)f(g\cdot x) dg,
  \end{equation}
 split the integral at $\chi(g) = 1$.  The part of the integral with $\chi(g) \geq 1$ is $ Z^{\pm, +}_q(f, \phi, L; s)$.  Since the part of the integral with $\chi(g)=1$ has measure 0, write the second part of the integral as
 \begin{equation}
  \int_{G^+ / \Gamma, \chi(g) \leq 1} \chi(g)^s \phi\left(g^{-1}\right) \sum_{x \in L} \Phi_q(x)f(g\cdot x) dg.
 \end{equation}
 The Poisson summation formula permits the representation (see (\ref{translation_dilation_action}))
\begin{align}
 \sum_{x \in L} \Phi_q(x)f(g\cdot x) &= \sum_{a \in V(\zed/q^2 \zed)} \Phi_q(a) \sum_{x \in L} f(g \cdot(q^2 x +a))\\ \notag
 &= \frac{1}{\chi(g)q^8} \sum_{a \in V(\zed/q^2 \zed)} \Phi_q(a) \sum_{y \in \hat{L}} e \left(\left\langle \frac{y}{q^2}, a \right\rangle\right) \hat{f}\left(g^{\iota} \cdot\frac{y}{q^2}\right)\\ \notag
 &= \frac{1}{\chi(g)}\sum_{y \in \hat{L}} \hat{\Phi}_q(y) \hat{f}\left(g^\iota \cdot\frac{y}{q^2}\right).
\end{align}
The part of the sum from $\hat{L}_0$ contributes $Z^{\pm, 0}_q(\hat{f}, \phi, \hat{L}; s).$  In the remainder of the sum, make the change of variable $g^\iota := g$, to obtain $\hat{Z}^{\pm, +}_q(\hat{f}, \phi, \hat{L};1-s)$.
\end{proof}

\section{Treatment of the singular integral}
We now check that the split functional equation gives the holomorphic continuation of the singular part of the orbital integral to all of $\bC$ and study the $q$-dependence. This section closely follows \cite{H17}, but note that by enforcing that $f|_S = 0$ we only consider the contribution from $\hat{f}$.  Continue to assume that $\hat{f}$ is Schwarz class.  We  assume that $\phi$ is a Hecke-eigen cusp form of right-$K$-type $2k$.

For $g \in G^1/\Gamma$ define (note $g = g^\iota$ since $g \in G^1$)
\begin{equation}
 J_q\left(\hat{f}\right)(g) = \sum_{x \in \hat{L}_0}\hat{\Phi}_q(x) \hat{f}\left(\frac{g\cdot x}{q^2}\right).
\end{equation}

 Following \cite{S72} write $g \in G^1$ in the Iwasawa decomposition as $g = k_\theta a_t n_u$ and define $t(g) = t$.  Let $\fS_C$ denote the Siegel set
 \begin{equation}
  \fS_C = \left\{k_\theta a_t n_u: \theta \in \bR, t \geq C, |u| \leq \frac{1}{2}\right\},
 \end{equation}
and define the class of functions
\begin{equation}
 C(G^1/\Gamma, r) = \left\{f \in C(G^1/\Gamma): \sup_{g \in \fS_{\frac{1}{2}}} t(g)^r |f(g)| < \infty\right\}.
\end{equation}
\begin{lemma}
 Let $f = f_{G} \otimes f_D$ have compact support, and suppose that $\hat{f}(x) \ll \frac{1}{1 + \|x\|_2^A}$ for some $A > 4$.  Then
 \begin{equation}
  J_q\left(\hat{f}\right) \in C(G^1/\Gamma, A-6).
 \end{equation}

\end{lemma}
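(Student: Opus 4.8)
The plan is to estimate $J_q(\hat f)(g)$ for $g$ in the Siegel set $\fS_{1/2}$ by breaking the sum over $\hat L_0$ according to the three pieces of Shintani's description in Lemma~\ref{fibration_lemma}: the zero form, the orbit $L_0(I)$ generated by $(0,0,0,m)$, and the orbit $\hat L_0(II)$ generated by $(0,0,3m,n)$. The zero form contributes the constant $\hat\Phi_q(0)\hat f(0)$, which is harmless. For the remaining two orbits I would first use $\Gamma$-invariance of $\hat\Phi_q$ together with the fact that $\hat f$ is being evaluated at $(g\cdot x)/q^2$ with $x$ running over a $\Gamma$-orbit; since $g$ ranges over a fundamental domain $G^1/\Gamma$, the combined sum over the orbit and the quotient unfolds against the full group, so effectively one is summing $\hat\Phi_q$ against $\hat f$ evaluated along the $G^1$-orbits of $w_1$ and $w_2$. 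Concretely, using the stabilizer computations $I_{w_1}=\{1\}$ and $I_{w_2}=N$, the sum over $\gamma\in\Gamma$ (resp.\ $\gamma\in\Gamma/\Gamma\cap N$) collapses the orbit into a sum over $m\ge 1$ (and over $n$ in the type II case) of $\hat\Phi_q$ evaluated at $(0,0,0,m)$ or $(0,0,3m,n)$, times $\hat f$ evaluated at the corresponding translate of $g$.

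Next I would compute the action of $g = k_\theta a_t n_u$ on the singular vectors $w_1=(0,0,1,0)$ and $w_2=(0,0,0,1)$ explicitly in coordinates. The point of choosing these representatives is that $n_u$ and $k_\theta$ act in a controlled way, and the key scaling comes from $a_t$: one finds $a_t\cdot w_1$ and $a_t\cdot w_2$ have norms comparable to a positive power of $t$ (the relevant exponent being governed by the weights of the $\GL_2$-action on the cubic form coordinates $c,d$, which scale like $t^{-2}$ and $t^{-3}$ respectively, so that $g^\iota = g$ acting gives growth). Thus for fixed $g$ with $t=t(g)$ large, the vector $(g\cdot x)/q^2$ has norm $\gg \|x\| \cdot t^{c_0}/q^2$ for an appropriate $c_0>0$, and the decay hypothesis $\hat f(x)\ll (1+\|x\|_2^A)^{-1}$ forces the whole sum over $m$ (and $n$) to converge and to be bounded by a negative power of $t$. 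Carefully tracking the exponents — the $t$-growth from the group action, the $q^2$ in the denominator, and the Jacobian/normalization factors — should yield a bound of the shape $J_q(\hat f)(g) \ll t(g)^{-(A-6)}$ uniformly on $\fS_{1/2}$, which is exactly the assertion $J_q(\hat f)\in C(G^1/\Gamma, A-6)$; the threshold $A>4$ is what guarantees the $m,n$-sums converge absolutely (one needs $A$ strictly larger than the number of "free" summation parameters weighted appropriately, which works out to $4$).

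Continuity of $J_q(\hat f)$ as a function on $G^1/\Gamma$ follows from uniform convergence of the defining series on compacta, again via the decay of $\hat f$; since $\hat f$ is Schwarz class this is routine, so the real content is the growth rate in the cusp. The main obstacle I anticipate is the bookkeeping in the second step: one must correctly identify the power of $t$ produced by the twisted action of $a_t$ on the specific singular vectors (this is where the factor $6$ in $A-6$ originates, matching $\chi(g)=(\det g)^6$ and the homogeneity of the discriminant), and one must handle the type II orbit, where the extra summation over $n\in\{0,\dots,3m-1\}$ introduces an additional factor of roughly $m$ that must be absorbed by the decay — this is precisely why Lemma~\ref{FT_eval_lemma} is invoked, since $\hat\Phi_q(0,0,3m,n)$ vanishes unless $p\mid m$ for each $p\mid q$, and more importantly the size bound on $\hat\Phi_q$ keeps the $q$-dependence in check (though for this lemma only the $t$-decay, not the $q$-uniformity, is asserted). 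I would structure the argument so that the type I and type II contributions are each shown to lie in $C(G^1/\Gamma, A-6)$ separately, then add.
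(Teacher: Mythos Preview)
Your approach has a genuine gap. You claim that for $x$ in the singular set, the vector $(g\cdot x)/q^2$ has norm $\gg \|x\|\,t^{c_0}/q^2$ for some $c_0>0$. This is false precisely because $x$ is singular. The representatives in Lemma~\ref{fibration_lemma} all lie in the subspace $\{x_1=x_2=0\}$, and on such vectors $a_t$ acts by $a_t\cdot(0,0,c,d)=(0,0,t^{-1}c,t^{-3}d)$: the norm \emph{shrinks} as $t\to\infty$, it does not grow. (Your weights ``$t^{-2}$ and $t^{-3}$'' are also off; the correct scalings on the four coordinates are $t^3,t,t^{-1},t^{-3}$.) More concretely, summing $|\hat f|$ just over the visibly singular points $(0,0,3m,n)\in\hat L$ already gives
\[
\sum_{m,n}\bigl|\hat f\bigl(0,0,3t^{-1}m/q^2,\,t^{-3}n/q^2\bigr)\bigr|,
\]
which for large $t$ behaves like a Riemann sum and is of size $\asymp t\cdot t^{3}=t^{4}$, not $t^{-(A-6)}$. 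So a direct termwise estimate using only the decay $|\hat f(x)|\ll(1+\|x\|_2^A)^{-1}$ cannot give the claimed bound: there must be cancellation in the sum over $\hat L_0$, and your outline provides no mechanism for it. (The ``unfolding'' you mention does not help either, since $g$ is fixed; there is no integral over $G^1/\Gamma$ to unfold against the $\Gamma$-sum.)

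The paper supplies that cancellation via Poisson summation. Since $f$ vanishes on the singular set, one has for $g\in G^1$
\[
J_q(\hat f)(g)=\sum_{x\in L\setminus L_0}\Phi_q(x)\,f(g\cdot x)\;-\;\sum_{x\in\hat L\setminus\hat L_0}\hat\Phi_q(x)\,\hat f\!\left(\frac{g\cdot x}{q^2}\right),
\]
and now both sums range over \emph{non}-singular lattice points. For such $x$ at least one of $x_1,x_2$ is nonzero (otherwise $x=y^2(cx+dy)$ has a repeated root), so after writing $g=c(g)a_t$ with $c(g)$ in a compact set one has $\|a_t\cdot x\|\gg t$ genuinely. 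The first sum then vanishes for large $t$ by the compact support of $f$, and the second is estimated by splitting on $x_1\neq 0$ versus $x_1=0,\,x_2\neq 0$, yielding the exponent $A-6$. The entire content of the lemma is this passage from singular to non-singular via Poisson; without it the argument does not go through.
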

\begin{proof}
 As in Lemma 2.10 of \cite{S72}, write $g \in \fS_{\frac{1}{2}}$ as $k_\theta a_t n_u = c(g) a_t$ where $c(g) = k_\theta a_t n_u a_{\frac{1}{t}}$ and note that, as $g$ varies in $\fS_{\frac{1}{2}}$, $c(g)$ varies in a compact set.  Hence, by compactness, 
 \begin{equation}
 \inf_{\substack{g \in \fS_{\frac{1}{2}},\\ 0 \neq x \in V_{\bR}}}\left\{ \frac{\|c(g) \cdot x\|_2}{\|x\|_2}\right\} > 0.
 \end{equation}
 
 By Poisson summation
 \begin{equation} 
  J_q\left(\hat{f}\right)(g) = \sum_{x \in L \setminus L_0}\Phi_q(x) f(g \cdot x) - \sum_{x \in \hat{L} \setminus \hat{L}_0} \hat{\Phi}_q(x)\hat{f}\left(\frac{g\cdot x}{q^2}\right).
 \end{equation}
Since we restrict to $x$ outside the singular set, for each such $x = (x_1, x_2, x_3, x_4)$, at least one of $x_1, x_2$ is non-zero.  Since $a_t \cdot x = (t^3 x_1, tx_2, t^{-1}x_3, t^{-3}x_4)$, the sum over $L \setminus L_0$ vanishes if $t$ is sufficiently large, by the compact support. Meanwhile, in the dual sum, \begin{equation}\left\|a_t\cdot \frac{x}{q^2}\right\|_2 \geq \frac{\min(t^3, 3t)}{q^2},\end{equation} and hence the dual sum may be bounded by splitting on $x_1  \neq 0$ and $x_1 = 0$,
\begin{align}
 \ll_q \sum_{\substack{x_1, x_2, x_3, x_4 \\ (x_1, x_2) \neq (0,0)}} \frac{1}{1 + \|(t^3 x_1, t x_2, t^{-1}x_3, t^{-3}x_4)\|_2^A} &\ll_q \frac{1}{t^{3A}} \sum_{x_1 \neq 0} \frac{1}{1 + \|(x_1, t^{-2}x_2, t^{-4}x_3, t^{-6}x_4)\|_2^A}\\\notag& \qquad+ \frac{1}{t^A} \sum_{x_2 \neq 0} \frac{1}{1 + \|(0,x_2, t^{-2}x_3, t^{-4}x_4)\|_2^A} \\\notag&\ll_q \frac{1}{t^{A-6}}.
\end{align}

\end{proof}
The object of interest is
\begin{equation}\label{singular_integral}
 \sI\left(\hat{f}, \phi\right) = \int_{G^1/\Gamma} \phi(g^{-1})J_q\left(\hat{f}\right)(g) dg.
\end{equation}
Note that there is not a question of convergence when $\phi$ is a cusp form due to the exponential decay in the cusp.
Recall the decomposition of Lemma \ref{fibration_lemma},
\begin{align}
 \hat{L}_0 &= \{0\} \sqcup \bigsqcup_{m=1}^\infty \bigsqcup_{\gamma \in \Gamma/(\Gamma \cap N)} \{\gamma \cdot (0,0,0,m)\} \sqcup \bigsqcup_{m=1}^\infty \bigsqcup_{n=0}^{3m-1} \bigsqcup_{\gamma \in \Gamma} \{\gamma \cdot (0,0,3m,n)\} \\\notag&= \{0\} \sqcup L_0(I) \sqcup \hat{L}_0(II).
\end{align}
The contribution from $\{0\}$ to (\ref{singular_integral}) is 0, since $\phi$ is orthogonal to the constant function.
Let 
\begin{align}
\Theta_{q}^{(1)}(\phi) &= \int_{G^1/\Gamma} \phi\left(g^{-1}\right) \sum_{x \in L_0(I)}\hat{\Phi}_q(x)\hat{f}\left(g \cdot \frac{x}{q^2}\right)dg\\
\notag \hat{\Theta}_{ q}^{(2)}( \phi) &= \int_{G^1/\Gamma}  \phi\left(g^{-1}\right) \sum_{x \in \hat{L}_0(II)}\hat{\Phi}_q(x) \hat{f}\left(g\cdot \frac{x}{q^2}\right)dg.
\end{align}

\begin{lemma}
 Let $\phi$ be a cusp form, of right $K$-type $2k$, which is an eigenfunction of the Hecke algebra.  We have
 \begin{equation}
  \Theta_{q}^{(1)}( \phi) = 0.
 \end{equation}

\end{lemma}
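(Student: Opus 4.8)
The plan is to unfold the orbit sum over $L_0(I)$ against the integral over $G^1/\Gamma$, and then to identify what remains as the constant term of $\phi$ along a unipotent subgroup, which vanishes because $\phi$ is cuspidal; only the cuspidality property (\ref{def_cusp_form}) is used. Concretely, by Lemma \ref{fibration_lemma}, for each $m\geq 1$ the form $(0,0,0,m)$ equals $m\cdot w_2$, whose $G^1$-stabilizer is $N$ (scaling commutes with the linear action of $G^1$ on $V_\bR$), so its $\Gamma$-orbit is parametrized by $\Gamma/(\Gamma\cap N)$. Using that $\hat\Phi_q$ is $\Gamma$-invariant, the inner sum becomes $\sum_{m\geq 1}\hat\Phi_q(0,0,0,m)\sum_{\gamma\in\Gamma/(\Gamma\cap N)}\hat f\bigl(g\gamma\cdot(0,0,0,m)/q^2\bigr)$. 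Since $g\mapsto\phi(g^{-1})$ is right $\Gamma$-invariant and $(0,0,0,m)$ is fixed by $\Gamma\cap N$, the function $h\mapsto \phi(h^{-1})\hat f\bigl(h\cdot(0,0,0,m)/q^2\bigr)$ descends to $G^1/(\Gamma\cap N)$, and the standard unfolding gives
\begin{equation*}
 \Theta_q^{(1)}(\phi) = \sum_{m=1}^\infty \hat\Phi_q(0,0,0,m) \int_{G^1/(\Gamma\cap N)} \phi(h^{-1})\, \hat f\Bigl(\frac{h\cdot(0,0,0,m)}{q^2}\Bigr)\, dh,
\end{equation*}
the interchange of sum and integral being licensed by the Schwarz decay of $\hat f$ together with the boundedness of $\hat\Phi_q$ and of $\phi$ (compare the estimate for $J_q(\hat f)$ proved above).

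Next I would peel off the integration over the compact fibre by the disintegration of Haar measure for the tower $\Gamma\cap N \subset N \subset G^1$, i.e.\ $\int_{G^1/(\Gamma\cap N)} = \int_{G^1/N}\int_{N/(\Gamma\cap N)}$. Because $N$ fixes $(0,0,0,m)$, the $\hat f$-factor is constant on each fibre $hN$, so the inner integral collapses to $\hat f\bigl(h\cdot(0,0,0,m)/q^2\bigr)\int_{N/(\Gamma\cap N)}\phi(n^{-1}h^{-1})\,dn$; after the measure-preserving substitution $n\mapsto n^{-1}$ on $N/(\Gamma\cap N)\cong\bR/\zed$ this is the constant term of $\phi$ along the lower unipotent $N$, evaluated at $h^{-1}$. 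To match it to (\ref{def_cusp_form}), I would conjugate by the Weyl element $w = \left(\begin{smallmatrix}0&-1\\1&0\end{smallmatrix}\right)\in\Gamma$, for which $w^{-1}N'w = N$ and $w^{-1}(\Gamma\cap N')w = \Gamma\cap N$: writing $n = w^{-1}\nu w$ with $\nu\in N'$ and using left $\Gamma$-invariance of $\phi$, the $N$-constant term at $h^{-1}$ equals $\int_{(\Gamma\cap N')\backslash N'}\phi(\nu\, w h^{-1})\,d\nu$, which is $0$ for a.e.\ $h$ by cuspidality applied at the point $wh^{-1}$. Hence the inner fibre integral vanishes for a.e.\ $h\in G^1/N$, so every summand over $m$ is $0$ and $\Theta_q^{(1)}(\phi) = 0$.

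There is no serious obstacle here: the substantive content is entirely the vanishing of the constant term of a cusp form. The only point demanding care is the bookkeeping of left- versus right-quotients throughout the unfolding, and checking that conjugation by $w$ is compatible with the Haar-measure normalizations used on $N/(\Gamma\cap N)$ and on $N'/(\Gamma\cap N')$ in (\ref{def_cusp_form}); since each contribution vanishes identically, the precise scalar normalization is immaterial.
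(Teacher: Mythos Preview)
Your proof is correct and follows essentially the same approach as the paper: unfold the $\Gamma/(\Gamma\cap N)$-sum against the $G^1/\Gamma$-integral, use that $N$ fixes $(0,0,0,m)$, and reduce to the vanishing constant term of the cusp form. The only cosmetic difference is that the paper first invokes the relation $\phi(g^{-1})=(-1)^k\phi(g^t)$ (proved earlier), so that transposition sends the $n_u$-integration in the Iwasawa decomposition directly to an $N'$-integral, whereas you keep $\phi(h^{-1})$ and conjugate by the Weyl element to pass from $N$ to $N'$; both maneuvers accomplish the same identification.
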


\begin{proof}
	Unfold the integral and sum to write, using $\phi(g^{-1}) = (-1)^k\phi(g^t)$,
	\begin{equation}\Theta_{q}^{(1)}(\phi) = (-1)^k\int_{G^1/(\Gamma\cap N)} \phi\left(g^t\right) \sum_{m=1}^\infty\hat{\Phi}_q(0,0,0,m)\hat{f}\left(g \cdot \left(0,0,0, \frac{m}{q^2}\right)\right)dg.
	\end{equation}
	Since the point $(0,0,0,1) = y^3$ is invariant under action by $N$, which leaves $y$ fixed, and since there is no constant term in the Fourier expansion of $\phi(g^{t})$ (see \ref{def_cusp_form}), the integral vanishes upon integrating in the $n_x$ variable of the Iwasawa decomposition.
\end{proof}

For $\varepsilon = \pm$ let, for all $\RE(x)$ sufficiently large, 
\begin{equation}
\hat{G}_{\phi}^\varepsilon(x)  = \sum_{\ell,m=1}^\infty \frac{\rho_\phi(\varepsilon 3\ell m)}{\ell^{1+x} (3m)^{1 + 3x}}.
\end{equation}
Given square-free $q$, satisfying $(q,6) = 1$, define for all $\RE(x)>0$ sufficiently large,
\begin{equation}
 \hat{G}_{\phi,q}^\varepsilon(x) = \sum_{\ell,m=1}^\infty \frac{\rho_{\phi}(\varepsilon 3\ell mq)}{\ell^{1+x}(3mq)^{1+3x}}.
\end{equation}
This is a sub-series of the Dirichlet series defining $\hat{G}^\varepsilon_\phi(x)$.

\begin{lemma}\label{G_bound_lemma}
 Let $q$ be square-free satisfying $(q,6)=1$. For $\epsilon > 0$, the functions $\hat{G}_{\phi}^\varepsilon(x)$ and $\hat{G}_{\phi,q}^{\varepsilon}(x)$ are bounded on $\{x: \RE(x) \geq \epsilon\}$ by a constant depending only on $\phi,$ and $\epsilon$.
\end{lemma}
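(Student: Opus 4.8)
The plan is to bound $\hat{G}_\phi^\varepsilon(x)$ in terms of the Dirichlet series of $|\rho_\phi(n)|$ and then invoke standard bounds for the Rankin--Selberg $L$-function (or simply the Hecke bound $|\rho_\phi(n)| \ll_\phi n^\epsilon$, which suffices here because the exponents of $\ell$ and $m$ in the denominators exceed $1$). First I would observe that for $\RE(x) = \sigma \geq \epsilon$ one has, trivially from the triangle inequality,
\begin{equation}
 \left|\hat{G}_\phi^\varepsilon(x)\right| \leq \sum_{\ell, m = 1}^\infty \frac{|\rho_\phi(3\ell m)|}{\ell^{1+\sigma}(3m)^{1+3\sigma}} \leq \sum_{\ell,m=1}^\infty \frac{|\rho_\phi(3\ell m)|}{\ell^{1+\epsilon}(3m)^{1+3\epsilon}},
\end{equation}
so it is enough to show the right-hand side is finite (and depends only on $\phi$ and $\epsilon$). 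For the $q$-twisted version $\hat{G}_{\phi,q}^\varepsilon(x)$ the same inequality holds with the extra factors $q^{-(1+\epsilon)}\cdot(\text{stuff})$, which only makes the bound smaller; so it suffices to treat $\hat{G}_\phi^\varepsilon$ and note that $\hat{G}_{\phi,q}^\varepsilon$ is a sub-series with nonnegative-weight majorant, hence bounded by the same constant.

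Next I would bound $|\rho_\phi(3\ell m)|$. Using the Hecke multiplicativity relation stated in the excerpt together with $\rho_\phi(1)=1$ (Hecke normalization), one gets the standard divisor-type bound $|\rho_\phi(n)| \leq d(n) \, n^{\theta}$ for the trivial $\theta=0$ bound, or more carefully $|\rho_\phi(n)| \ll_{\phi,\epsilon} n^{\epsilon}$ in the Maass case (Kim--Sarnak is not even needed; the convexity/Rankin--Selberg bound $\sum_{n \leq N}|\rho_\phi(n)|^2 \ll_\phi N^{1+\epsilon}$ plus Cauchy--Schwarz gives $\sum_{n\leq N}|\rho_\phi(n)| \ll_\phi N^{1+\epsilon}$, and partial summation then handles the Dirichlet series). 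With $|\rho_\phi(3\ell m)| \ll_{\phi,\epsilon} (\ell m)^{\epsilon/2}$ we obtain
\begin{equation}
 \sum_{\ell,m=1}^\infty \frac{|\rho_\phi(3\ell m)|}{\ell^{1+\epsilon}(3m)^{1+3\epsilon}} \ll_{\phi,\epsilon} \left(\sum_{\ell=1}^\infty \frac{1}{\ell^{1 + \epsilon/2}}\right)\left(\sum_{m=1}^\infty \frac{1}{m^{1+\epsilon/2}}\right) = \zeta\!\left(1+\tfrac{\epsilon}{2}\right)^2 < \infty,
\end{equation}
which is a constant depending only on $\phi$ and $\epsilon$. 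This completes the estimate; relabeling $\epsilon$ (replacing $\epsilon$ by $2\epsilon$ at the outset) makes the statement match the asserted form.

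The only mild subtlety --- and the step I would be most careful about --- is whether a clean $n^\epsilon$ bound on individual coefficients is available in the generality of the lemma (arbitrary Hecke-eigen cusp form of weight $2k$, Maass or holomorphic). In the holomorphic case Deligne's bound $|\rho_\phi(n)| \ll_\epsilon n^\epsilon$ is classical; in the Maass case one may either cite a subconvex/Kim--Sarnak-type bound or, to keep the argument self-contained, use only the second-moment (Rankin--Selberg) bound $\sum_{n\le N}|\rho_\phi(n)|^2 \ll_\phi N$ together with Cauchy--Schwarz and partial summation, which is entirely elementary once analytic continuation of the Rankin--Selberg $L$-function is granted. Either route yields absolute convergence of the defining Dirichlet series uniformly on $\RE(x) \geq \epsilon$, and hence the claimed bound; the factor $3$ and the coprimality condition $(q,6)=1$ play no essential role beyond ensuring the series is well-defined.
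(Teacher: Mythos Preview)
Your approach is essentially the paper's: take absolute values, reduce to convergence of a single majorant series, invoke Rankin--Selberg, and note that $\hat{G}_{\phi,q}^\varepsilon$ is a sub-series of that majorant. The one genuine slip is in your displayed estimate, where you use the pointwise bound $|\rho_\phi(3\ell m)| \ll_{\phi,\epsilon} (\ell m)^{\epsilon/2}$: for Maass forms this is the Ramanujan--Petersson conjecture and is open. Your parenthetical alternative (Rankin--Selberg second moment plus Cauchy--Schwarz plus partial summation) does \emph{not} recover a pointwise bound---it only gives the average $\sum_{n\le N}|\rho_\phi(n)|\ll_\phi N$---so as written the displayed line is unjustified in the Maass case.

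The fix, which is precisely what the paper does, is to group the double sum by $n=3\ell m$: the number of such factorizations is $\ll_\epsilon n^{\epsilon/2}$, and since $\ell^{1+\epsilon}(3m)^{1+3\epsilon}\ge (3\ell m)^{1+\epsilon}=n^{1+\epsilon}$, the majorant is dominated by $\sum_n |\rho_\phi(n)|/n^{1+\epsilon/2}$. Now the Rankin--Selberg average bound and partial summation finish the argument without any pointwise input, and this works uniformly for all $\epsilon>0$.
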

\begin{proof}
 It follows from Rankin-Selberg theory that as $X \to \infty$, 
 \begin{equation}
  \sum_{n \leq X} |\rho_\phi(n)| \ll X.
 \end{equation}
Since the number of ways of writing $n = 3\ell m$ is bounded by $\ll_\epsilon n^{\frac{\epsilon}{2}}$, it follows that
\begin{align*}
 \left|\hat{G}_\phi^\varepsilon(x)\right| &\leq \sum_{\ell, m = 1}^\infty \frac{|\rho_\phi(\varepsilon 3\ell m)|}{|\ell^{1+x} (3m)^{1 + 3x} |}\\
 & \ll_\epsilon \sum_{n = 1}^\infty \frac{|\rho_\phi(n)|}{n^{1 + \frac{\epsilon}{2}}}
\end{align*}
and this sum is bounded by a constant depending only on $\phi$ and $\epsilon$, by partial summation. The same bound applies to $\hat{G}_{\phi,q}^{\varepsilon}(x)$ since it is a sub-series of $\hat{G}_\phi^{\varepsilon}(x)$.
\end{proof}

The Archimedean counterpart to $\hat{G}$ is for $\varepsilon_1, \varepsilon_2  = \pm$,
\begin{align}
 W_{\phi}^{\varepsilon_1, \varepsilon_2}(w_1, w_2) &= \frac{1}{2}\frac{\Gamma(1-w_2)}{(2\pi)^{\frac{1 + w_1 + w_2}{2}}} \left(\cos\left(\frac{\pi}{2}(1-w_2) \right) + i \varepsilon_1\varepsilon_2 \sin\left(\frac{\pi}{2}(1-w_2) \right)\right)\\
 \notag &\times \widetilde{K}_{\varepsilon_1 \frac{k}{2}, it}\left(\frac{w_1 + 3w_2-1}{2}\right).
\end{align}
\begin{lemma}\label{W_bound_lemma}
The function $W_\phi^{\varepsilon_1, \varepsilon_2}(w_1, w_2)$ is holomorphic in $\RE(w_1 + 3w_2)>1$, $\RE(w_2) < 1$. 
 Let $0 < \epsilon < \frac{1}{2}$. For $\epsilon \leq \RE(w_2) \leq 1-\epsilon$, 
 \begin{equation}
  \left|\Gamma(1-w_2) \left(\cos \left(\frac{\pi}{2}(1-w_2) \right) + i\epsilon_1\epsilon_2 \sin \left(\frac{\pi}{2}(1-w_2) \right) \right) \right| \ll |w_2|^{\frac{1}{2}-\RE(w_2)}.
 \end{equation}

\end{lemma}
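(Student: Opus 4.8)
The plan is to treat the two assertions separately, reducing each to elementary properties of the Gamma function together with Lemma~\ref{Mellin_transform_lemma}.

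For the holomorphy statement I would write $W_\phi^{\varepsilon_1,\varepsilon_2}(w_1,w_2)$ as a product of four factors and check each. The factor $(2\pi)^{-\frac{1+w_1+w_2}{2}} = \exp\bigl(-\tfrac{1+w_1+w_2}{2}\log 2\pi\bigr)$ is entire and nowhere zero. Since $(\varepsilon_1\varepsilon_2)^2 = 1$, the trigonometric factor equals $e^{i\varepsilon_1\varepsilon_2\frac{\pi}{2}(1-w_2)}$, which is entire. The factor $\Gamma(1-w_2)$ is holomorphic away from $w_2 \in \{1,2,3,\dots\}$, in particular throughout $\RE(w_2) < 1$. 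Finally, by Lemma~\ref{Mellin_transform_lemma} the Mellin transform $\widetilde{K}_{\varepsilon_1 \frac{k}{2}, it}(s)$ is holomorphic for $\RE(s) > 0$, so $\widetilde{K}_{\varepsilon_1 \frac{k}{2}, it}\bigl(\tfrac{w_1+3w_2-1}{2}\bigr)$ is holomorphic where $\RE(w_1+3w_2) > 1$. The product is then holomorphic on $\{\RE(w_1+3w_2)>1,\ \RE(w_2)<1\}$.

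For the bound I would again replace the trigonometric factor by $e^{i\varepsilon_1\varepsilon_2\frac{\pi}{2}(1-w_2)}$ and write $w_2 = \sigma + i\tau$ with $\epsilon \le \sigma \le 1-\epsilon$, so that $\bigl|e^{i\varepsilon_1\varepsilon_2\frac{\pi}{2}(1-w_2)}\bigr| = e^{\varepsilon_1\varepsilon_2\frac{\pi}{2}\tau}$. Stirling's formula, applied uniformly for $1-\sigma$ in the compact interval $[\epsilon,1-\epsilon]$ and $|\tau| \ge 1$, gives $|\Gamma(1-w_2)| = |\Gamma((1-\sigma)-i\tau)| \ll_\epsilon |\tau|^{\frac12-\sigma}\,e^{-\frac{\pi}{2}|\tau|}$. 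Multiplying, the exponential factors combine to $e^{\frac{\pi}{2}(\varepsilon_1\varepsilon_2\tau-|\tau|)} \le 1$, irrespective of the sign of $\varepsilon_1\varepsilon_2$ and of $\tau$, so the left side of the claimed inequality is $\ll_\epsilon |\tau|^{\frac12-\sigma}$ for $|\tau|\ge1$. For $|\tau| \le 1$ the left side is a continuous function of $w_2$ on a compact set hence bounded, and there $|w_2|\asymp 1$, so the desired inequality holds after enlarging the constant. Finally, for $|\tau|\ge1$ one has $|\tau| \le |w_2| \le \sqrt2\,|\tau|$, and since the exponent $\tfrac12-\sigma$ is bounded, $|\tau|^{\frac12-\sigma} \asymp_\epsilon |w_2|^{\frac12-\sigma} = |w_2|^{\frac12-\RE(w_2)}$, which completes the estimate.

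I do not expect a genuine obstacle here; the only thing requiring care is bookkeeping the two sign choices $\varepsilon_1\varepsilon_2 = \pm1$ alongside the two regimes $\tau \to +\infty$ and $\tau\to -\infty$, and noting that the combined exponential $e^{\frac{\pi}{2}(\varepsilon_1\varepsilon_2\tau - |\tau|)}$ is always at most $1$ (it is exactly $1$ in the regime where the Gamma decay is cancelled, leaving the polynomial main term, and contributes extra exponential decay otherwise). A further minor point is that the exponent $\tfrac12-\sigma$ changes sign at $\sigma = \tfrac12$, so in passing from $|\tau|$ to $|w_2|$ one should use only that this exponent is bounded, which makes the two quantities comparable up to an $\epsilon$-dependent constant.
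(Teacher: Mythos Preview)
Your argument is correct and follows essentially the same route as the paper: both derive the holomorphy from Lemma~\ref{Mellin_transform_lemma} together with the standard analyticity of $\Gamma(1-w_2)$, and both obtain the estimate by combining Stirling's exponential decay $|\Gamma(1-w_2)| \ll |\tau|^{\frac12-\RE(w_2)} e^{-\frac{\pi}{2}|\tau|}$ with the exponential growth $e^{\frac{\pi}{2}|\tau|}$ of the trigonometric factor. Your rewriting of $\cos+\,i\varepsilon_1\varepsilon_2\sin$ as $e^{i\varepsilon_1\varepsilon_2\frac{\pi}{2}(1-w_2)}$ makes the sign bookkeeping and the $|\tau|\le 1$ case more explicit than the paper's version, but the two proofs are substantively the same.
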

\begin{proof}
Recall from Lemma \ref{Mellin_transform_lemma} that, to the right of 0, $\tilde{K}_{\varepsilon_1 \frac{k}{2}, it}$ is holomorphic and decays faster than any polynomial in vertical strips.  This suffices to prove the holomorphicity of $W_\phi^{\varepsilon_1, \varepsilon_2}$. 

The claimed bound  follows from Stirling's approximation which gives, in $\RE(z)> \epsilon$,
 \begin{equation}
  \Gamma(z) = \sqrt{\frac{2\pi}{z}} \left(\frac{z}{e} \right)^z \left(1 + O\left( \frac{1}{z}\right)\right).
 \end{equation}
Set $z = (1-w_2) = \sigma + iT$, and assume without loss of generality that $T>1$.  Thus $\log z = \log iT + O\left(\frac{1}{T}\right) = \log T + \frac{i\pi}{2} + O\left(\frac{1}{T} \right)$.  It follows that
\begin{equation}
 \RE(z \log z) = -\frac{\pi}{2} T + \sigma \log T + O(1).
\end{equation}
Since $|e^z|$ is bounded below, it follows that
\begin{equation}
 |\Gamma(z)| \ll \exp\left( -\frac{\pi}{2} T + \left(\sigma-\frac{1}{2}\right) \log T\right).
\end{equation}
The claim follows on considering the exponential growth of $\sin$ and $\cos$ in $T$.
\end{proof}

Set
\begin{equation}
  f_{2\ell}(x) =  \int_0^{1} f(k_{2\pi \theta}\cdot x) e(-2\ell \theta)d\theta.
\end{equation}
Since $k_\theta$ varies in a compact set, if $f$ is Schwarz class, so is $f_{2\ell}$.
Introduce, for $z_1, z_2 \in \bC$
\begin{align}
 \Sigma^{\pm}(f,z_1, z_2) &= \int_0^\infty \int_0^\infty f(0,0,t,\pm u) t^{z_1-1}u^{z_2-1}dt du.
\end{align}
\begin{lemma}
 If $f$ is Schwarz class, then $\Sigma^{\pm}(z_1, z_2)$ is holomorphic in $\RE(z_1), \RE(z_2) > 0$.  In this domain, it satisfies the decay estimate in vertical strips, for $\sigma_1, \sigma_2 > 0$, for any $A_1, A_2 > 0$,
 \begin{equation}
  \left|\Sigma^{\pm}(f, \sigma_1 + it_1, \sigma_2 + it_2) \right| \ll_{A_1, A_2} \frac{1}{(1 + |t_1|)^{A_1} (1 + |t_2|)^{A_2}}.
 \end{equation}
For $t > 0$, if $f^t(x) = f(tx)$, then $\Sigma^{\pm}(f^t, z_1, z_2) = t^{-z_1-z_2} \Sigma^{\pm}(f, z_1, z_2)$.
\end{lemma}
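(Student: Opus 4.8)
The plan is to view $\Sigma^{\pm}(f,z_1,z_2)$ as the two-variable Mellin transform, over the positive quadrant, of the function $g^{\pm}(t,u)=f(0,0,t,\pm u)$. The restriction of a Schwarz-class function on $V_{\bR}\cong\bR^4$ to the coordinate plane $\{(0,0,t,u)\}$ is Schwarz-class on $\bR^2$, so $g^{\pm}$ and all of its partial derivatives decay faster than any power of $\max(t,u)$ as $\max(t,u)\to\infty$ and are bounded near the origin. Hence for $\RE(z_1),\RE(z_2)>0$ the double integral converges absolutely and locally uniformly: near the origin $t^{\RE(z_1)-1}u^{\RE(z_2)-1}$ is locally integrable because $\RE(z_i)>0$ and $g^{\pm}$ is bounded, while near infinity the rapid decay of $g^{\pm}$ beats the polynomial factor. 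Differentiating under the integral sign (justified by the same domination) gives holomorphy in $z_1$ and in $z_2$ separately, and local boundedness then gives joint holomorphy on the product domain $\RE(z_1),\RE(z_2)>0$.

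For the decay in vertical strips I would iterate the integration-by-parts identity \eqref{mellin_operator}, by which $t\frac{d}{dt}$ multiplies the Mellin transform in $t$ by $-z_1$, and likewise $u\frac{d}{du}$ by $-z_2$. Applying this $A_1$ times in $t$ and $A_2$ times in $u$ yields
\begin{equation}
z_1^{A_1}z_2^{A_2}\,\Sigma^{\pm}(f,z_1,z_2)=(-1)^{A_1+A_2}\int_0^\infty\!\!\int_0^\infty\left(t\frac{\partial}{\partial t}\right)^{A_1}\!\left(u\frac{\partial}{\partial u}\right)^{A_2}g^{\pm}(t,u)\,t^{z_1-1}u^{z_2-1}\,dt\,du.
\end{equation}
The point requiring care is that all boundary terms in the repeated integration by parts vanish: at $+\infty$ this is the Schwarz decay of $g^{\pm}$ and its derivatives, and at $0$ it follows from $\RE(z_i)>0$ since each $(t\partial_t)^{j}(u\partial_u)^{l}g^{\pm}$ is bounded there. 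Because $(t\partial_t)^{A_1}(u\partial_u)^{A_2}g^{\pm}$ is a finite linear combination of the functions $t^{j}u^{l}\,\partial_t^{j}\partial_u^{l}g^{\pm}$ with $j\le A_1$, $l\le A_2$, it is again rapidly decaying at infinity and bounded at the origin, so the right-hand integral is bounded by a constant $C(f,A_1,A_2,\sigma_1,\sigma_2)$ depending only on $f$, $A_1$, $A_2$ and $\sigma_i=\RE(z_i)$. Dividing through by $|z_1|^{A_1}|z_2|^{A_2}$ and using $|\sigma_i+it_i|\gg_{\sigma_i}1+|t_i|$, together with the fact that $A_1,A_2$ are arbitrary, gives the claimed bound.

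Finally, the scaling relation is a direct change of variables: writing the defining integral for $f^{t}(x)=f(tx)$ with integration variables renamed $v,w$, one has $\Sigma^{\pm}(f^{t},z_1,z_2)=\int_0^\infty\int_0^\infty f(0,0,tv,\pm tw)\,v^{z_1-1}w^{z_2-1}\,dv\,dw$, and substituting $v\mapsto v/t$, $w\mapsto w/t$ produces the Jacobian $t^{-2}$ and the factors $(v/t)^{z_1-1}(w/t)^{z_2-1}$, which combine to $t^{-z_1-z_2}\Sigma^{\pm}(f,z_1,z_2)$. I do not expect a serious obstacle in this lemma; the only delicate point is the vanishing of the boundary contributions in the integration by parts, which is exactly the standard mechanism converting smoothness and rapid decay of a test function into super-polynomial decay of its Mellin transform in vertical strips.
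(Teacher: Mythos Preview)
Your proof is correct and follows the same route as the paper's: absolute convergence from the Schwarz decay of $f$ gives holomorphy, repeated integration by parts in each variable gives the super-polynomial decay in vertical strips, and the dilation identity is a direct change of variables. The paper's argument is stated tersely (``follows on integrating several times by parts''), and you have simply supplied the routine details, including the vanishing of boundary terms, that the paper leaves implicit.
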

\begin{proof}
 The convergence of the integral in $\RE z_1, \RE z_2 > 0$ is guaranteed since $f$ is Schwarz class, and the holomorphicity follows by differentiating under the integral.  The decay in vertical strips follows on integrating several times by parts.  The dilation follows from changing variables,
 \begin{align}
  \Sigma^{\pm}(f^t, z_1, z_2) &= \int_0^\infty \int_0^\infty f(0,0,tx_1, \pm tx_2) x_1^{z_1-1} x_2^{z_2-1} dx_1 dx_2\\
  \notag &= t^{-z_1-z_2} \int_0^\infty \int_0^\infty f(0,0,x_1,\pm x_2)x_1^{z_1-1}x_2^{z_2-1}dx_1dx_2\\
  \notag &= t^{-z_1-z_2} \Sigma^{\pm}(f, z_1, z_2).
 \end{align}

\end{proof}

The following lemma obtains an expression for $\hat{\Theta}_q^{(2)}$ as a double Mellin transform.

\begin{lemma}\label{contour_expr_lemma}
 For $q$ be square-free with  $(q,6) = 1$, 
  \begin{align}
  &\hat{\Theta}_{  q}^{(2)}( \phi) = (-1)^k\sum_{\varepsilon_1, \varepsilon_2 = \pm}\sum_{q_1q_2 =q}q_1^{-2}\prod_{p|q_2}(p^{-3}-p^{-5})  \\
  \notag&\times \oiint_{\substack{\RE(w_1, w_2) =(1, \frac{1}{2})}} q_2^{2(w_1 +w_2)}\Sigma^{\varepsilon_2}\left(\hat{f}_{2k}, w_1, w_2\right)W_\phi^{\varepsilon_1, \varepsilon_2}(w_1, w_2)\hat{G}_{\phi,q_2}^{\varepsilon_1}\left(\frac{w_1 + w_2 -1}{2} \right)dw_1 dw_2.
 \end{align}

\end{lemma}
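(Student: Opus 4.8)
The plan is to evaluate $\hat\Theta_{q}^{(2)}(\phi)$ directly, transforming it by unfolding and a cascade of Mellin transforms into the stated double contour integral, in close parallel to the treatment of $\Theta_q^{(1)}$. First, using the description of $\hat L_0(II)$ in Lemma \ref{fibration_lemma} (each $(0,0,3m,n)$ with $3m\neq 0$ has trivial $\Gamma$-stabilizer, so the union is genuinely disjoint), I would unfold the $\gamma$-sum against $\int_{G^1/\Gamma}$ to obtain an integral over all of $G^1$, and replace $\phi(g^{-1})$ by $(-1)^k\phi(g^t)$:
$$\hat\Theta_q^{(2)}(\phi) = (-1)^k\int_{G^1}\phi(g^t)\sum_{m=1}^\infty\sum_{n=0}^{3m-1}\hat\Phi_q(0,0,3m,n)\,\hat f\left(g\cdot\left(0,0,\tfrac{3m}{q^2},\tfrac{n}{q^2}\right)\right)dg.$$
In the Iwasawa decomposition $g = k_\theta a_t n_u$ one has $g^t = \nu_u a_t k_{-\theta}$, so the right $K$-type $2k$ of $\phi$ produces the factor $e^{-i2k\theta}$, and integrating $\hat f(k_\theta\cdot-)$ against $e^{-i2k\theta}\tfrac{d\theta}{2\pi}$ replaces $\hat f$ by its $K$-component $\hat{f}_{2k}$. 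Since $a_t n_u\cdot(0,0,c,d) = (0,0,c/t,(cu+d)/t^3)$, only the last two slots of $\hat{f}_{2k}$ are sampled, while $\phi(\nu_u a_t) = \phi(u+it^2)$ in the upper half-plane model, into which I substitute the Fourier expansion $\phi(u+it^2) = t\sum_{r\neq 0}\rho_\phi(r)\,\check{K}_{\sgn(r)k/2,\,s-1/2}(2\pi|r|t^2)e(ru)$.

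Next I would insert the double Mellin inversion $\hat{f}_{2k}(0,0,x_1,\varepsilon_2 x_2) = \oiint\Sigma^{\varepsilon_2}(\hat{f}_{2k},w_1,w_2)\,x_1^{-w_1}x_2^{-w_2}\,dw_1\,dw_2$ (splitting by the sign $\varepsilon_2$ of the fourth coordinate) and evaluate the resulting integrals and sums one at a time. The $u$-integral, after the substitution $\beta = 3mu+n$, reduces to an exponential integral evaluated by $\int_0^\infty e^{ix}x^{s-1}dx = \Gamma(s)e^{i\pi s/2}$, producing $\Gamma(1-w_2)\bigl(\cos(\tfrac\pi2(1-w_2)) + i\varepsilon_1\varepsilon_2\sin(\tfrac\pi2(1-w_2))\bigr)$ with $\varepsilon_1 = \sgn(r)$ (the two sign cases of $e^{\pm ix}$ combining since $(\varepsilon_1\varepsilon_2)^2 = 1$), together with the phase $e(-rn/3m)$; summing this phase against $\hat\Phi_q(0,0,3m,n)$ over $n\bmod 3m$ gives a Ramanujan-type sum which, by the multiplicativity of $\hat\Phi_q$ and the evaluations of Lemma \ref{FT_eval_lemma}, factors over $p\mid q$, vanishes unless $q\mid m$, and collapses to $3m\sum_{q_1q_2=q}q_1^{-4}\bigl(\prod_{p\mid q_2}(p^{-3}-p^{-5})\bigr)\one[q_1^2q_2\mid m]\,\one[\tfrac{3m}{q_1^2}\mid r]$. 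The remaining $t$-integral, after the substitution $v=t^2$, collapses to $\tfrac12\widetilde{K}_{\sgn(r)k/2,\,it}\bigl(\tfrac{w_1+3w_2-1}{2}\bigr)$ by the Mellin transform of $\check{K}$. Finally, reindexing $m = q_1^2 q_2 m_1$ and $r = \varepsilon_1 3q_2m_1\ell$ turns the $m$- and $r$-sums into $\hat{G}_{\phi,q_2}^{\varepsilon_1}\bigl(\tfrac{w_1+w_2-1}{2}\bigr)$, and tallying the powers of $2\pi$, $3$, $|r|$, $m_1$, $q_1$ and $q_2$ collects precisely $\tfrac12(2\pi)^{-(1+w_1+w_2)/2}\Gamma(1-w_2)(\cos+i\varepsilon_1\varepsilon_2\sin)\widetilde{K} = W_\phi^{\varepsilon_1,\varepsilon_2}(w_1,w_2)$, the coefficient $q_1^{-2}\prod_{p\mid q_2}(p^{-3}-p^{-5})$, and $q_2^{2(w_1+w_2)}$, which is the claimed formula.

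Every interchange of summation, integration and Mellin contour will be justified on the contour $\RE(w_1,w_2) = (1,\tfrac12)$, where all the $w$-integrals converge absolutely and the elementary integrals are convergent in the sense of the cited Mellin formulas. There $\RE(w_1+3w_2) = \tfrac52 > 1$ and $\RE(w_2) = \tfrac12 < 1$, so by Lemmas \ref{Mellin_transform_lemma} and \ref{W_bound_lemma} the kernel $W_\phi^{\varepsilon_1,\varepsilon_2}$ is holomorphic and decays faster than any polynomial in $\IM(w_1),\IM(w_2)$; $\RE(w_1),\RE(w_2)>0$ gives the same rapid vertical decay of $\Sigma^{\varepsilon_2}(\hat{f}_{2k},w_1,w_2)$ from the Schwarz property of $\hat f$; $\RE\tfrac{w_1+w_2-1}{2} = \tfrac14 > 0$ makes $\hat{G}_{\phi,q_2}^{\varepsilon_1}$ absolutely convergent and bounded by Lemma \ref{G_bound_lemma}; and the $r$-sum converges absolutely because $\sum_{n\le X}|\rho_\phi(n)|\ll X$ (Rankin--Selberg) together with the rapid decay of $\hat{f}_{2k}$. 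I expect the technical heart of the argument to be exactly this combinatorial bookkeeping: correctly threading the divisibility conditions produced by Lemma \ref{FT_eval_lemma}, keeping the signs $\varepsilon_1$ and $\varepsilon_2$ attached to the right branch of the exponential integral, and checking that the powers of $q_1$ and $q_2$ (including the $q_2$-factor concealed inside $\hat{G}_{\phi,q_2}$) recombine to give precisely $q_1^{-2}$ and $q_2^{2(w_1+w_2)}$ rather than some other distribution of powers.
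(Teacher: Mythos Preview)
Your proposal is correct and follows essentially the same route as the paper: unfold, extract the $K$-type to pass to $\hat f_{2k}$, Fourier-expand $\phi$, take Mellin transforms in the last two coordinates, and identify $W_\phi^{\varepsilon_1,\varepsilon_2}$ and $\hat G_{\phi,q_2}^{\varepsilon_1}$. The only cosmetic difference is the order in which the $n$-sum and the evaluation of $\hat\Phi_q$ are handled---the paper applies Lemma~\ref{FT_eval_lemma} and re-indexes $m,n$ \emph{before} the $u$-integration (so the $n$-sum becomes a direct selection of Fourier frequencies of $\phi$), whereas you do the $u$-integral first and then compute the weighted exponential sum $\sum_n \hat\Phi_q(0,0,3m,n)e(-rn/3m)$; both yield the same divisibility constraints and the same final formula.
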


\begin{proof}
 We have, using $\phi(g^{-1}) = (-1)^k\phi(g^t)$, and folding together the sum over $\Gamma$ and integral over $G$,
 \begin{align}\label{Theta_integral}
  &\hat{\Theta}_{ q}^{(2)}( \phi)  = (-1)^k\int_{G^1/\Gamma} \phi\left(g^t\right) \sum_{x \in \hat{L}_0(II)}\hat{\Phi}_{q}(x)\hat{f}\left(g \cdot \frac{x}{q^2} \right)dg\\
  \notag &= (-1)^k\int_{G^1/\Gamma}  \phi\left(g^t\right)\sum_{m=1}^\infty \sum_{0 \leq n < 3m} \hat{\Phi}_{q}(0,0,3m,n)\sum_{\gamma \in \Gamma}\hat{f}\left(g \gamma \cdot\left(0,0, \frac{3m}{q^2}, \frac{n}{q^2}\right)\right)dg\\
   \notag &= (-1)^k\int_{G^1}  \phi\left(g^t\right)\sum_{m=1}^\infty \sum_{0 \leq n < 3m} \hat{\Phi}_{q}(0,0,3m,n)\hat{f}\left(g  \cdot\left(0,0, \frac{3m}{q^2}, \frac{n}{q^2}\right)\right)dg.
  \end{align}
 The evaluation of $\hat{\Phi}_q(x)$ from Lemma \ref{FT_eval_lemma} imposes the constraint $q|m$.  The lemma gives
 \begin{align}
  \hat{\Phi}_q(0,0,3mq,n) &= \prod_{p|q, p^2|(mq,n)} (p^{-2} + p^{-3}-p^{-5}) \prod_{p|q, p^2 \nmid (mq,n)} (p^{-3}-p^{-5})\\
  \notag &= \prod_{p|q} (p^{-3}-p^{-5}) \prod_{p|q, p^2 | (mq,n)} \left(1 + \frac{p^{-2}}{p^{-3}-p^{-5}}\right)\\
  \notag &= \sum_{\substack{q_1q_2 = q, \\q_1^2 |(mq, n)}} q_1^{-2}\prod_{p|q_2} (p^{-3}-p^{-5}).
 \end{align}
Replacing $m=: m q q_1 $ and $n =: n q_1^2 $ in (\ref{Theta_integral})  obtains
 \begin{align}
  &\hat{\Theta}_{ q}^{(2)}( \phi) = (-1)^k\sum_{q_1q_2 = q}q_1^{-2}\prod_{p|q_2}(p^{-3}-p^{-5})\\
 \notag &\quad \times \int_{G^1}  \phi\left(g^t\right)\sum_{m=1}^\infty \sum_{0 \leq n < 3mq_2} \hat{f}\left(g\cdot \left(0,0, \frac{3m}{q_2}, \frac{n}{q_2^2}\right)\right) dg.
 \end{align}
 Write $g = k_\theta a_t n_u$ and integrate in $\theta$.  Since $\phi(g^t)$ transforms under $k_\theta^t$ by a character of degree $-2k$, the integral replaces $\hat{f}$ with $\hat{f}_{2k}$.  Since $n_u$ maps $xy^2 \mapsto xy^2 + uy^3$,  
 \begin{align}
 \notag &\hat{\Theta}_{ q}^{(2)}( \phi)= (-1)^k\sum_{q_1q_2 = q}q_1^{-2}\prod_{p|q_2}(p^{-3}-p^{-5})\int_0^\infty t^{-3}dt \int_{-\infty}^\infty du\\
 \notag &\quad \times  \sum_{m=1}^\infty \sum_{0 \leq n < 3mq_2}^\infty  \hat{f}_{2k}\left(a_t\cdot \left(0,0, \frac{3m}{q_2}, \frac{n + 3mq_2u}{q_2^2} \right)\right)\phi((a_t n_u)^t).
 \end{align} 
 Making a linear change of variable in $u$ obtains
 \begin{align}
  &\hat{\Theta}_{ q}^{(2)}( \phi) = (-1)^k\sum_{q_1q_2 = q}q_1^{-2}\prod_{p|q_2}(p^{-3}-p^{-5})\\
  \notag &\times \int_0^\infty t^{-3}dt \int_{-\infty}^\infty du \sum_{m=1}^\infty \hat{f}_{2k}\left(a_t \cdot \left(0,0,\frac{3m}{q_2}, \frac{u}{q_2^2} \right) \right)\frac{1}{3mq_2}\sum_{0 \leq n < 3mq_2}\phi\left((a_t n_{\frac{u-n}{3mq_2}})^t\right).
 \end{align}
Expand $\phi$ in Fourier series, abbreviating here and in what follows $\check{K}_{\frac{\varepsilon k}{2}, s-\frac{1}{2}} = \check{K}_\varepsilon$,\footnote{Note that  $n_u^t a_t$ corresponds to $u + it^2$ in the upper half plane model.}
\begin{equation}
 \phi\left(\left(a_t n_{\frac{u-n}{3mq_2}}\right)^t\right) = t\sum_{\varepsilon = \pm} \sum_{\ell = 1}^\infty \rho_\phi(\varepsilon \ell) \check{K}_\varepsilon (2\pi \ell t^2)e\left(\varepsilon \ell \frac{u-n}{3mq_2}\right). 
\end{equation}
  The sum over $n$ selects Fourier coefficients with frequencies $\ell$ divisible by $3mq_2$.  Replacing $\ell$ with $ 3mq_2\ell := \ell$,
\begin{align}
 \hat{\Theta}_{ q}^{(2)}( \phi) =& (-1)^k\sum_{\varepsilon = \pm}\sum_{q_1q_2=q} q_1^{-2}\prod_{p|q_2}(p^{-3}-p^{-5})  \int_0^\infty \int_{-\infty}^\infty \\\notag
&\times\sum_{\ell, m=1}^\infty \rho_{\phi}(\varepsilon 3\ell mq_2) \check{K}_\varepsilon(6\pi \ell m q_2 t^2)\hat{f}_{2k}(0,0,3t^{-1}q_2^{-1}m, u q_2^{-2})e(\varepsilon \ell t^3 u) du tdt.
\end{align}
Here one factor of $t$ has been gained from the Fourier expansion of $\phi$ and a factor of $t^3$ was gained by replacing $ut^{-3}$ with $u$.

Take Mellin transforms in both variables in $\hat{f}_{2k}$, writing $u = \varepsilon_2 |u|$, to obtain
\begin{align}
  &\hat{\Theta}_{ q}^{(2)}( \phi) =(-1)^k\sum_{\varepsilon_1, \varepsilon_2 = \pm}\sum_{q_1q_2 = q}q_1^{-2}\prod_{p|q_2}(p^{-3}-p^{-5}) \\ \notag &\times \int_0^\infty \int_{0}^\infty \sum_{\ell, m=1}^\infty \rho_{\phi}(\varepsilon_1 3 \ell mq_2)
 \check{K}_{\varepsilon_1}(6\pi \ell m q_2 t^2)\\\notag&\times\oiint_{\RE(w_1, w_2) = (1, \frac{1}{2})} \Sigma^{\varepsilon_2}\left(\hat{f}_{2k}, w_1, w_2\right) \left(\frac{tq_2}{3m}\right)^{w_1}\left(\frac{q_2^2}{u}\right)^{w_2}e(\varepsilon_1 \varepsilon_2 \ell t^3 u)dw_1dw_2 du tdt.
\end{align}
Note that since $\hat{f}_{2k}$ is Schwarz class, the Mellin transform $\Sigma^{\varepsilon_2}\left(\hat{f}_{2k}, w_1, w_2\right)$ decays faster than any polynomial in vertical strips.  This justifies exchange in the order of integration.

Make the change of variables $u:= 2\pi \ell t^3 u$, which replaces 
\begin{equation}
 u^{-w_2}du := (2\pi \ell t^3)^{w_2-1}u^{-w_2}du,
\end{equation}
 then $t:= 6\pi \ell m q_2 t^2$, which replaces 
\begin{equation}
 t^{w_1 + 3w_2 -2} dt := \frac{1}{2} (6\pi \ell mq_2)^{\frac{1- w_1-3w_2}{2}} t^{\frac{w_1 +3w_2 -3}{2}}dt.
\end{equation}
Thus
\begin{align}
 &\hat{\Theta}_{ q}^{(2)}( \phi) =(-1)^k\sum_{\varepsilon_1, \varepsilon_2 = \pm}\sum_{q_1q_2 = q} q_1^{-2}\prod_{p|q_2}(p^{-3}-p^{-5}) \\ \notag&\times \oiint_{\RE(w_1,w_2) = (1, \frac{1}{2})}q_2^{2(w_1+w_2)}\Sigma^{\varepsilon_2}\left(\hat{f}_{2k}, w_1, w_2\right) \sum_{\ell, m=1}^\infty \frac{\rho_{\phi}(\varepsilon_1 3 \ell mq_2)}{\ell^{\frac{1+w_1 + w_2 }{2}}(3mq_2)^{\frac{-1+3(w_1+w_2)}{2}}}\\&\notag\times \frac{1}{2}\frac{1}{(2\pi)^{\frac{1+w_1 + w_2}{2}}}  \int_0^\infty e^{i \varepsilon_1\varepsilon_2 u} u^{-w_2}du \int_0^\infty\check{K}_{\varepsilon_1}(t)t^{\frac{w_1+3w_2-3}{2}}dtdw_1dw_2.
\end{align}
The Dirichlet series evaluates to $\hat{G}_{\phi, q_2}^{\varepsilon_1}\left(\frac{w_1+w_2-1}{2}\right)$, while the Mellin transforms in the last line combine with the other factors to give $W_\phi^{\varepsilon_1, \varepsilon_2}(w_1,w_2)$, so that 
\begin{align}
& \hat{\Theta}_{ q}^{(2)}( \phi)=(-1)^k \sum_{\varepsilon_1, \varepsilon_2 = \pm} \sum_{q_1q_2 =q }q_1^{-2}\prod_{p|q_2}(p^{-3}-p^{-5})  \\ \notag
&  \times \oiint_{\RE(w_1,w_2) = (1, \frac{1}{2})} q_2^{2(w_1+w_2)}\Sigma^{\varepsilon_2}\left(\hat{f}_{2k},w_1,w_2\right)W_\phi^{\varepsilon_1, \varepsilon_2}(w_1,w_2)\hat{G}_{\phi, q_2}^{\varepsilon_1}\left(\frac{w_1+w_2-1}{2}\right)dw_1dw_2.
\end{align}

\end{proof}
We can now evaluate $Z^{\pm, 0}_q(\hat{f}, \hat{L}, \phi; s)$.
\begin{lemma}\label{Z_0_lemma}
 The singular integral has the evaluation
 \begin{align}
  &Z^{\pm, 0}_q(\hat{f}, \hat{L}, \phi; s)\\\notag&=(-1)^k \sum_{\varepsilon_1, \varepsilon_2 = \pm}\sum_{q_1q_2 = q}q_1^{-2}\prod_{p|q_2}(p^{-3}-p^{-5})   \oiint_{\RE(w_1,w_2) = (1, \frac{1}{2})} \frac{q_2^{2(w_1+w_2)}}{12s-12+3w_1+3w_2}\\ \notag
& \qquad \times \Sigma^{\varepsilon_2}(\hat{f}_{2k},w_1,w_2)W_\phi^{\varepsilon_1, \varepsilon_2}(w_1,w_2)\hat{G}_{\phi, q_2}^{\varepsilon_1}\left(\frac{w_1+w_2-1}{2}\right)dw_1dw_2
 \end{align}
 and has holomorphic continuation to $\bC$.
\end{lemma}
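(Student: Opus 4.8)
\textbf{Proof strategy for Lemma \ref{Z_0_lemma}.}

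The plan is to start from the definition of $Z_q^{\pm,0}(\hat f,\phi,\hat L;s)$ and reduce it to the object $\hat\Theta_q^{(2)}(\phi)$ treated in Lemma \ref{contour_expr_lemma}, but with the test function $\hat f$ replaced by a dilated copy. Recall that
\begin{equation}
Z^{\pm, 0}_q(\hat{f}, \phi, \hat{L}; s) =\int_{\substack{G^+/\Gamma\\ \chi(g) \leq 1}} \chi(g)^{s-1} \phi\left(g^{-1}\right) \sum_{x \in \hat{L}_0} \hat{\Phi}_q(x) \hat{f}\left(g^\iota \cdot\frac{x}{q^2}\right) dg.
\end{equation}
First I would use the factorization of Haar measure on $G^+$ over $G^1$ and the scaling parameter $\lambda$, writing $g = \diag(\lambda,\lambda)g_1$ with $g_1 \in G^1$, so that $\chi(g) = \lambda^{12}$ and $g^\iota = g_1^\iota = g_1$ for the $G^1$ part (the scalar cancels in $g^\iota$). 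The constraint $\chi(g)\le 1$ becomes $\lambda \le 1$, and since $\phi$ is scale-invariant the $\phi$-factor only sees $g_1$. The action of the scalar on a vector $x\in V_\bR$ multiplies by $\lambda^3$ (from the twisted action, $g\cdot f(x,y) = f((x,y)g)$ with $\det$ twist giving $\chi(g)^{1/2}$-type scaling on cubic forms), so inside $\hat f$ the argument becomes $\lambda^3 \, g_1\cdot \frac{x}{q^2}$; absorbing this as a dilation of the test function, $\hat f(\lambda^3\,\cdot) = (\hat f)^{\lambda^3}$, and using the dilation identity $\Sigma^{\pm}(f^t,z_1,z_2) = t^{-z_1-z_2}\Sigma^{\pm}(f,z_1,z_2)$ from the $\Sigma$-lemma, the entire $G^1$-integral reproduces $\hat\Theta_q^{(2)}$ with each $\Sigma^{\varepsilon_2}(\hat f_{2k},w_1,w_2)$ picking up a factor $\lambda^{-3(w_1+w_2)}$.

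Next I would carry out the $\lambda$-integral. The remaining $\lambda$-dependence is $\int_0^1 \lambda^{12(s-1)} \cdot \lambda^{-3(w_1+w_2)} \cdot \lambda^3\, \frac{d\lambda}{\lambda}$, where the extra $\lambda^3$ accounts for the Jacobian coming from having scaled the lattice argument (equivalently, Poisson summation on $\hat L_0$ contributes a power of $\chi(g)$; one must track this carefully against the normalization in Lemma \ref{contour_expr_lemma}). After collecting all powers the integrand is $\lambda^{12s - 12 + 3w_1 + 3w_2}\frac{d\lambda}{\lambda}$ up to the shift, which converges at $\lambda = 0$ precisely when $\RE(12s - 12 + 3w_1 + 3w_2) > 0$, and evaluates to $\frac{1}{12s - 12 + 3w_1 + 3w_2}$. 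On the contour $\RE(w_1,w_2) = (1,\tfrac12)$ this holds for $\RE(s) > 1$, which is the range where the manipulations are valid. Substituting back produces exactly the claimed double-contour formula, with the $\frac{1}{12s-12+3w_1+3w_2}$ factor inserted into the integrand of the expression from Lemma \ref{contour_expr_lemma}.

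Finally, for the holomorphic continuation to all of $\bC$: the functions $\Sigma^{\varepsilon_2}(\hat f_{2k},w_1,w_2)$ decay faster than any polynomial in vertical strips (Schwarz class), $W_\phi^{\varepsilon_1,\varepsilon_2}$ is holomorphic on $\RE(w_1+3w_2)>1$, $\RE(w_2)<1$ with at most polynomial growth (Lemmas \ref{Mellin_transform_lemma}, \ref{W_bound_lemma}), and $\hat G_{\phi,q_2}^{\varepsilon_1}$ is bounded on $\RE(\cdot)\ge\epsilon$ (Lemma \ref{G_bound_lemma}). Given $s$ in any compact set, I would shift the $w_1$-contour (keeping $\RE(w_2)=\tfrac12$, say) so that $\RE(12s-12+3w_1+3w_2)$ stays bounded away from $0$, i.e. move $\RE(w_1)$ to the right as $\RE(s)$ decreases; the only potential pole of the integrand in $w_1$ is the simple zero of $12s-12+3w_1+3w_2$, which lies off the shifted contour, and no poles of $W_\phi^{\varepsilon_1,\varepsilon_2}$ or $\hat G_{\phi,q_2}^{\varepsilon_1}$ are crossed as long as we keep $\RE(w_1+3w_2)>1$ and $\RE(w_1+w_2-1)/2 > 0$ — both are maintained by moving $w_1$ rightward. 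The rapid decay of $\Sigma$ in vertical strips guarantees absolute convergence of the shifted integral, so it defines a holomorphic function of $s$ on the enlarged region, and letting the region exhaust $\bC$ gives the entire continuation.

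\textbf{Main obstacle.} The delicate point is bookkeeping of the powers of $\chi(g) = \lambda^{12}$: the original integrand has $\chi(g)^{s-1}$, Poisson summation (as used in the split functional equation) already inserted one factor $\frac{1}{\chi(g)}$, and the twisted action of the scalar on $V_\bR$ introduces a further $\lambda$-power inside $\hat f$ whose Mellin transform then shifts $w_1,w_2$. Getting the exponent of $\lambda$ in the final $\lambda$-integral to come out as exactly $12s-12+3w_1+3w_2$ (and not, say, with a spurious additive constant) requires matching the normalization conventions here against those in Lemma \ref{contour_expr_lemma} with care; once that exponent is correct, the evaluation $\int_0^1 \lambda^{c}\frac{d\lambda}{\lambda} = \frac1c$ and the contour-shifting argument for continuation are routine.
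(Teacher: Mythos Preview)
Your approach is exactly the paper's: factor $G^+ = \bR_{>0}\times G^1$, recognize the $G^1$-integral as $\sI(\hat f^{\,t},\phi)=\hat\Theta_q^{(2)}(\phi)$ for the dilated test function, feed in Lemma~\ref{contour_expr_lemma} with the dilation rule for $\Sigma^{\pm}$, integrate in $\lambda$, and shift $w_1$ rightward for the continuation. The holomorphic-continuation argument you give is correct.

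The bookkeeping you flag as the ``main obstacle'' is, however, carried out incorrectly in two places, and the errors do not cancel. First, for $g=\diag(\lambda,\lambda)\,g_1$ with $g_1\in G^1$ one has $g^\iota=g/\det g=\diag(\lambda^{-1},\lambda^{-1})\,g_1$, so the scalar is \emph{inverted}, not cancelled; hence $g^\iota\cdot x = \lambda^{-3}\,(g_1\cdot x)$ and the relevant dilation is $\hat f^{\,1/\lambda^3}$, giving $\Sigma^{\varepsilon_2}(\hat f_{2k}^{\,1/\lambda^3},w_1,w_2)=\lambda^{+3(w_1+w_2)}\Sigma^{\varepsilon_2}(\hat f_{2k},w_1,w_2)$, with the opposite sign from what you wrote. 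Second, there is no extra $\lambda^3$ Jacobian: the factor $\chi(g)^{-1}$ from Poisson summation was already absorbed into the exponent $s-1$ when $Z_q^{\pm,0}$ was defined in the split functional equation, so you must not reintroduce it. With these two corrections the $\lambda$-integral is simply
\[
\int_0^1 \lambda^{12(s-1)}\cdot \lambda^{3(w_1+w_2)}\,\frac{d\lambda}{\lambda}
=\frac{1}{12s-12+3w_1+3w_2},
\]
which is now an honest identity rather than one ``up to a shift''. Note also that your displayed exponent $12(s-1)-3(w_1+w_2)+3$ does not equal $12s-12+3(w_1+w_2)$ for any choice of shift, so the discrepancy would have been visible on checking the arithmetic.
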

\begin{proof}
Note that if $f^t$ denotes the dilation by $t$ on $V_{\bR}$, $f^t(x) = f(tx)$, then $\Sigma^\epsilon(f^t, w_1,w_2) = t^{-w_1-w_2}\Sigma^\epsilon(f, w_1,w_2)$. Also, acting by the scalar matrix $\begin{pmatrix} \lambda &0\\ 0&\lambda\end{pmatrix}$ scales the form $x$ by $\lambda^3$. Thus
\begin{align}
 Z^{\pm, 0}_q(\hat{f}, \hat{L}, \phi; s) &= \int_{\substack{G^+/\Gamma\\ \chi(g) \leq 1}} \chi(g)^{s-1} \phi(g^{-1})\sum_{x \in \hat{L}_0} \hat{\Phi}_q(x) \hat{f}\left(g^{\iota}\cdot\frac{x}{q^2} \right) dg\\
\notag &= \int_0^1 \lambda^{12(s-1)} \sI\left(\hat{f}^{\frac{1}{\lambda^3}}, \phi\right) \frac{d\lambda}{\lambda}.
\end{align}
Since 
 $\sI\left(\hat{f}, \phi\right) = \hat{\Theta}_{ q}^{(2)}( \phi)$,
 \begin{align}
  &Z^{\pm, 0}_q(\hat{f}, \hat{L}, \phi; s)=(-1)^k\sum_{\varepsilon_1, \varepsilon_2 = \pm}\sum_{q_1q_2 = q}q_1^{-2} \prod_{p|q_2}(p^{-3}-p^{-5})    \oiint_{\RE(w_1,w_2) = (1, \frac{1}{2})} q_2^{2(w_1+w_2)}\\ \notag
& \qquad \times \Sigma^{\varepsilon_2}(\hat{f}_{2k},w_1,w_2)W_\phi^{\varepsilon_1, \varepsilon_2}(w_1,w_2)\hat{G}_{\phi, q_2}^{\varepsilon_1}\left(\frac{w_1+w_2-1}{2}\right)\int_0^1\lambda^{12s-12+3w_1+3w_2}\frac{d\lambda}{\lambda}dw_1dw_2.
\end{align}
The evaluation follows on integrating in $\lambda$.
Shifting the $w_1$ contour rightward obtains the holomorphic continuation of $Z_q^{\pm, 0}$ in $s$.
\end{proof}
\section{Proof of Theorem \ref{cubic_field_theorem}}
We now give the proof of Theorem \ref{cubic_field_theorem}.

Let $F$ be the smooth function of the theorem and define
\begin{equation}\label{weighted_sum}
 N_{3, \pm}'(\phi, F,X) = \sum_{m \in \zed \setminus \{0\}} F\left(\frac{\pm m}{X}\right){\sum_{i = 1,*}^{h(m)}}\frac{\phi(g_{i,m})}{|\Gamma(i,m)|}
\end{equation}
with the $*$ restricting summation to classes which are maximal at all primes $p$.
\begin{lemma}\label{field_lemma} Let $\phi$ be a cusp-form.
 The count of fields from Theorem \ref{cubic_field_theorem} satisfies
 \begin{equation}
  N_{3, \pm}(\phi, F, X) = \frac{1}{2}N_{3, \pm}'(\phi, F, X) + O\left(\|\phi\|_\infty X^{\frac{1}{2}}\right).
 \end{equation}

\end{lemma}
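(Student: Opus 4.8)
The plan is to pass from the count of cubic fields to the count of integral binary cubic forms via the Delone--Fadeev--Gan--Gross--Savin parametrization (the theorem of \cite{B04a} recalled above), and then to bound the discrepancy between the two sums using the structure of the exceptional families that occur on the side of forms but not of fields.

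\emph{Step 1: the parametrization.} By the parametrization theorem, $\GL_2(\zed)$-equivalence classes of integral binary cubic forms of discriminant $m$ are in discriminant-preserving bijection with isomorphism classes of cubic rings of discriminant $m$, irreducible forms corresponding to orders in cubic fields and forms maximal at every prime to maximal cubic rings. Since a cubic field $K$ has a unique maximal order $\sO_K$, and $\sO_K$ is the unique cubic ring with fraction field $K$ which is maximal, cubic fields $K$ with $\Disc(K)=m$ are in bijection with $\GL_2(\zed)$-classes of \emph{irreducible} integral binary cubic forms of discriminant $m$ that are maximal at every prime; under this bijection the shape $\Lambda_K$ is the shape of the rank-$3$ lattice attached to $R(f)$ by the canonical embedding. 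Hence $N_{3,\pm}'$ is, up to the stabilizer weights $1/|\Gamma(i,m)|$, the sum over \emph{all} maximal forms of discriminant $\pm m$ — irreducible or reducible — while $N_{3,\pm}$ keeps only those coming from fields.

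\emph{Step 2: the factor $\tfrac12$ and the stabilizers.} Since $[\GL_2(\zed):\SL_2(\zed)]=2$, a single $\GL_2(\zed)$-orbit with stabilizer $H$ (in the twisted action) breaks into $\SL_2(\zed)$-orbits whose stabilizer-weighted count $\sum 1/|\Gamma(i,m)|$ equals $2/|H|$: if $H\subset\SL_2(\zed)$ there are two $\SL_2(\zed)$-orbits each with stabilizer $H$, while if $H\not\subset\SL_2(\zed)$ there is one $\SL_2(\zed)$-orbit with stabilizer of index $2$ in $H$. When there are two $\SL_2(\zed)$-orbits they represent the two orientations of the shape, and with the identification of $\Lambda_K$ with a point of $\SL_2(\zed)\backslash\SL_2(\bR)$ fixed in the paper their values of $\phi$ combine to $2\,\phi(\Lambda_K)$. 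For a non-Galois cubic field $K$ one has $\mathrm{Aut}(\sO_K)=1$, and a short computation with the twisted action (the determinant $-1$ part of the stabilizer would force $-I$, which has determinant $1$) shows $H$ is trivial; such a field therefore contributes exactly $2\,\phi(\Lambda_K)$ to $N_{3,\pm}'$, i.e. twice its contribution to $N_{3,\pm}$. For a cyclic cubic field $K$ one has $\mathrm{Aut}(\sO_K)=\zed/3\zed$, realized inside $\SL_2(\zed)$ by an element of order $3$, so $|H|=3$ and such a field contributes only $\tfrac23\,\phi(\Lambda_K)$.

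\emph{Step 3: the error.} Consequently $N_{3,\pm}-\tfrac12 N_{3,\pm}'$ has exactly two sources. First, the cyclic cubic fields $K$ in the support of $F(\pm\,\cdot/X)$ (occurring only for the $+$ sign), each contributing $\bigl(1-\tfrac12\cdot\tfrac23\bigr)\phi(\Lambda_K)=\tfrac23\phi(\Lambda_K)$; since every cyclic cubic field satisfies $\Disc(K)=\mathfrak{f}^2$ for its conductor $\mathfrak{f}$, the number with $|\Disc(K)|\leq CX$ is $\ll X^{1/2}$ (parametrizing cyclic cubic fields by their conductors), so by the trivial bound $|\phi|\leq\|\phi\|_\infty$ this contributes $\ll\|\phi\|_\infty X^{1/2}$. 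Second, the reducible maximal forms: for $|m|$ large the only reducible cubic ring of discriminant $m$ maximal at every prime is $\zed\times\sO_{\bQ(\sqrt m)}$ (when $m$ is a fundamental discriminant), contributing at most one $\GL_2(\zed)$-class; a direct geometry-of-numbers computation (as in \cite{BH16}) shows the shape of $\zed\times\sO_{\bQ(\sqrt m)}$ lies high in the cusp — at height $\asymp|m|^{1/2}$ in the upper half-plane model — so by the exponential decay of the cusp form $\phi$ in the cusp this entire family contributes $\ll_\phi X e^{-cX^{1/2}}=o(X^{1/2})$. Adding the two bounds gives $N_{3,\pm}(\phi,F,X)=\tfrac12 N_{3,\pm}'(\phi,F,X)+O(\|\phi\|_\infty X^{1/2})$.

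\emph{Main obstacle.} The parametrization is off the shelf; the work is the bookkeeping and the exceptional families. The point that requires a genuine argument is the reducible locus: one must know \emph{where} the shapes of the rings $\zed\times\sO_{\bQ(\sqrt m)}$ sit and then use cuspidality, because there are $\asymp X$ such forms in the window and bounding each by $\|\phi\|_\infty$ would be hopelessly lossy. The orientation/stabilizer reconciliation against the base-point convention for $\Lambda_K$, and verifying that cyclic cubic fields are the only fields with non-trivial stabilizer (so that the exact factor $\tfrac12$ holds off that thin set), are the other places where care is needed.
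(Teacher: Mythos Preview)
Your proposal is correct and follows essentially the same route as the paper: pass to maximal binary cubic forms via Delone--Faddeev, observe that $S_3$ fields appear with weight $2$ and cyclic cubic fields with weight $\tfrac{2}{3}$, bound the cyclic contribution by the trivial $O(X^{1/2})$ count, and dispose of the reducible (quadratic) locus using the cuspidal decay of $\phi$ at the shape of $\zed\times\sO_{\bQ(\sqrt m)}$. The paper is terser---it cites \cite{TT13b} and \cite{BST13} for the weight bookkeeping rather than working through the $\GL_2(\zed)/\SL_2(\zed)$ orbit splitting as you do, and it computes the cusp height of the reducible forms explicitly (writing down the forms $(-D/4,0,1,0)$ and $(-(D-1)/4,1,1,0)$ and finding $t(g)\asymp D^{1/4}$, i.e.\ upper-half-plane height $\asymp D^{1/2}$, matching your claim) rather than citing \cite{BH16}---but the substance is identical. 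One cosmetic point: the paper also records the contribution of $\bQ$ (the ring $\zed^3$, discriminant $1$) as $O(\|\phi\|_\infty)$; you omit it, which is harmless since $F$ has compact support in $\bR^+$.
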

\begin{proof}
 By the Delone-Faddeev correspondence the sum in (\ref{weighted_sum}) counts fields of degree at most 3.  $S_3$ cubic fields are counted with weight 2, while cyclic cubic fields are counted with weight $\frac{2}{3}$, quadratic fields are counted with weight 1 and $\bQ$ is counted with weight $\frac{1}{3}$, see Proposition 5.1 of \cite{TT13b}, or \cite{BST13} for a detailed discussion.
 
 There are $O\left(X^{\frac{1}{2}}\right)$ cyclic cubic extensions of discriminant at most $X$, which accounts for the error term.  According as the discriminant is 0 or 1 modulo 4, the quadratic fields are represented by forms 
 $(-D/4, 0, 1, 0)$ or $(-(D-1)/4, 1, 1, 0)$ which have an irreducible quadratic factor of the corresponding discriminant.  Since $a_t$ acts on $x_{\pm}$ with $a_t \cdot x_{\pm} = \left(\frac{t^3}{\sqrt{2}}, 0, \mp \frac{1}{t\sqrt{2}}, 0\right)$, solving $g \cdot x_{\pm} = (-D/4, 0, 1, 0)$ has $t(g)$ of order $D^{\frac{1}{4}}$, and the case of $(-(D-1)/4, 1, 1, 0)$ is similar.  Thus the sum over these fields is $O(\|\phi\|_\infty)$ by the exponential decay of the cusp-form $\phi$. The contribution from $\bQ$ is $O\left(\|\phi\|_\infty\right)$.
\end{proof}
Going forward we handle just the sum over positive $m$, the negative part being treated similarly. 
By inclusion-exclusion,
\begin{equation}
 N_3'(\phi, F,X) = \sum_{q} \mu(q)\sum_{m =1}^{\infty} F\left(\frac{m}{X}\right){\sum_{i = 1}^{h(m)}}\Phi_q(x_{i,m})\frac{\phi(g_{i,m})}{|\Gamma(i,m)|}.
\end{equation}

\begin{lemma}\label{tail_lemma}
 The tail of the sieve satisfies the bound
 \begin{equation}
  \sum_{q > Q} \mu(q)\sum_{m \in \zed \setminus \{0\}} F\left(\frac{m}{X}\right){\sum_{i = 1}^{h(m)}}\Phi_q(x_{i,m})\frac{\phi(g_{i,m})}{|\Gamma(i,m)|} \ll_\epsilon \|\phi\|_\infty \frac{X}{Q^{1-\epsilon}}.
 \end{equation}

\end{lemma}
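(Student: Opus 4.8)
The plan is to bound the tail trivially, in absolute value, which reduces it to a purely arithmetic count of cubic rings non-maximal at all primes dividing $q$. I would first record three reductions. A cusp form is bounded on $\Gamma\backslash G^1$, so $\|\phi\|_\infty<\infty$; $F$ has compact support, so the summand vanishes unless $|m|\asymp X$; and a cubic ring which is non-maximal at a prime $p$ has discriminant divisible by $p^2$, whence $\Phi_q(x_{i,m})\neq 0$ forces $q^2\mid m$, so the $q$-summation is in fact supported on $q\ll\sqrt X$. Using $|\mu(q)|\le1$, $|F|\ll1$, $|\phi(g_{i,m})|\le\|\phi\|_\infty$ and $|\Gamma(i,m)|^{-1}\le 1$, the triangle inequality gives
\[
\left|\sum_{q>Q}\mu(q)\sum_{m}F\!\left(\tfrac{m}{X}\right)\sum_{i=1}^{h(m)}\Phi_q(x_{i,m})\frac{\phi(g_{i,m})}{|\Gamma(i,m)|}\right|\ll\|\phi\|_\infty\sum_{q>Q}\ \sum_{0<|m|\ll X}\ \sum_{i=1}^{h(m)}\frac{\Phi_q(x_{i,m})}{|\Gamma(i,m)|}.
\]

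The crux is then a uniformity estimate of Davenport--Heilbronn type: uniformly in squarefree $q$ and in $Y\ge1$,
\[
\sum_{0<|m|\le Y}\ \sum_{i=1}^{h(m)}\frac{\Phi_q(x_{i,m})}{|\Gamma(i,m)|}\ll_\epsilon\frac{Y}{q^{2-\epsilon}}.
\]
This is the one step carrying real content, and the only point where the precise $q$-dependence must be controlled. I would take it from \cite{TT13b}, where (up to the harmless stabilizer weight) it is exactly the tail estimate that powers their sieve for cubic fields, with \cite{BST13} providing a detailed treatment. Alternatively one re-derives it within the present setup: running Poisson summation on $V_\bR$ as in the Shintani-zeta analysis above but \emph{without} the automorphic weight $\phi$ produces a main term equal to $\hat\Phi_q(0)$ times Davenport's asymptotic $\asymp Y$ for the weighted number of $\SL_2(\zed)$-classes of integral binary cubic forms with $0<|\Disc|\le Y$, and $\hat\Phi_q(0)=\prod_{p\mid q}\hat\Phi_p(0)\ll_\epsilon q^{-2+\epsilon}$ by the evaluations above and multiplicativity; the secondary term and the nonzero dual frequencies contribute at most the sum bounded in Theorem \ref{FT_bound_theorem}, which is $O_\epsilon(q^{-7+\epsilon}Y)$ and is therefore absorbed.

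Granting the uniformity estimate with $Y\ll X$, the lemma follows at once:
\[
\ll_\epsilon\|\phi\|_\infty\,X\sum_{q>Q}q^{-2+\epsilon}\ll_\epsilon\|\phi\|_\infty\frac{X}{Q^{1-\epsilon}},
\]
since $\sum_{q>Q}q^{-2+\epsilon}\ll Q^{-1+\epsilon}$. Thus the entire difficulty lies in the uniformity estimate, which is standard (Davenport--Heilbronn together with Theorem \ref{FT_bound_theorem}); everything else is the bookkeeping above.
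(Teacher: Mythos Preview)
Your argument is correct and follows essentially the same route as the paper: bound trivially by $\|\phi\|_\infty$, reduce to counting $q$-non-maximal forms of bounded discriminant, and invoke the uniformity estimate from \cite{TT13b} (the paper cites Lemma~3.4 there in the form $\sum_{|\Disc(x)|<Y,\ r^2\mid\Disc(x)}1\ll 6^{\omega(r)}Y/r^2$, which is exactly your $\ll_\epsilon Y/q^{2-\epsilon}$ after absorbing $6^{\omega(q)}$), then sum $\sum_{q>Q}q^{-2+\epsilon}$. Your proposed alternative derivation via Poisson summation and Theorem~\ref{FT_bound_theorem} is unnecessary here and would require additional care with the singular terms, but your primary argument matches the paper's.
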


\begin{proof}
 Lemma 3.4 of \cite{TT13b} proves that for square-free $r$, and $Y > 1$,
 \begin{equation}
  \sum_{\substack{|\Disc(x)| < Y\\ r^2| \Disc(x)}} 1 < \frac{M 6^{\omega(r)}Y}{r^2}
 \end{equation}
where the sum is over non-singular binary cubic forms up to $\GL_2(\zed)$ equivalence, where $M$ is an absolute constant, and where $\omega(r)$ is the number of prime factors of $r$. Since for  square-free $r$, an index $r$ subring of a maximal ring has discriminant divisible by $r^2$, those forms selected by $\Phi_q$ have discriminant divisible by $q^2$.  Hence,  for any $Q > 1$, for all $\epsilon > 0$,
\begin{align}\label{integral_representation}
 &\sum_{q > Q} \mu(q)\sum_{m \in \zed \setminus \{0\}} F\left(\frac{m}{X}\right){\sum_{i = 1}^{h(m)}}\Phi_q(x_{i,m})\frac{\phi(g_{i,m})}{|\Gamma(i,m)|} \\
 \notag & \ll \|\phi\|_\infty \sum_{q > Q} \mu^2(q) \frac{X 6^{\omega(q)}}{q^2} \ll_\epsilon \|\phi\|_\infty \frac{X}{Q^{1-\epsilon}}.
\end{align}
\end{proof}

It remains to  control the main part of the sieve.
 Write, in $\RE(s) > 0$,
\begin{equation}
\tilde{F}(s) = \int_0^\infty F(x)x^{s-1}dx
\end{equation}
for the Mellin transform. Since $F$ is smooth and of compact support, $\tilde{F}(s)$ is entire.  
Set $f_D$ by twice applying the operator $x \frac{d}{dx}$ to $F$, so that $\tilde{f}_D(s) = s^2 \tilde{F}(s)$, see (\ref{mellin_operator}).   Also, choose $f_G$ such that $\Lambda_{f_G, \phi} \neq 0$.

The part of the sum in $q \leq Q$ may be expressed, by Mellin inversion, as
\begin{align}
 N_3''(\phi, F, X) := \sum_{q \leq Q} \mu(q) \int_{\RE(s) = 2} \tilde{F}(s) X^s \sL_q^+(\phi,s) ds.
\end{align}
Note that on this line, $\sL_q^+(\phi, s)$ is defined by an absolutely convergent Dirichlet series, so that the convergence follows from the rapid decay of $\tilde{F}(s)$.
  Write, where $\tilde{f}_D(s) \neq 0$,
\begin{align}
 \sL_q^+(s,\phi) &= \frac{12 Z^+_q(f,\phi,L;s)}{ \Lambda_{f_G,\phi} \tilde{f}_D(s)} \\\notag&= \frac{12}{ \Lambda_{f_G, \phi}\tilde{f}_D(s)}\left(Z_q^{+,+}(f,\phi,L;s) + \hat{Z}_q^{+,+}(\hat{f},\phi,\hat{L};1-s) + Z_q^{+,0}(\hat{f},\phi,\hat{L};s) \right).
\end{align}
Thus
\begin{equation}
 N_3''(\phi, F, X) := \frac{12}{ \Lambda_{f_G,\phi}}\sum_{q \leq Q} \mu(q) \int_{\RE(s) = 2}  X^s Z^+_q(f,\phi,L;s) \frac{ds}{s^2}.
\end{equation}

The contour integral (\ref{integral_representation}) is the sum of three terms, corresponding to $Z_q^{+,+}$, $\hat{Z}_q^{+,+}$ and $Z_q^{+,0}$. Write these terms as $N_3^{+,+}$, $\hat{N}_3^{+,+}$ and $N_3^{+,0}$, which are bounded separately. 

\begin{lemma}\label{lemma_pp}
 We have the bound, for any $\epsilon > 0$,
 \begin{equation}
  N_3^{+,+}(\phi, F, X)= \frac{12}{ \Lambda_{f_G,\phi}}\sum_{q \leq Q} \mu(q) \int_{\RE(s) = 2} X^s Z_q^{+,+}(f,\phi,L;s)\frac{ds}{s^2} \ll_{\phi,\epsilon} X^\epsilon.
 \end{equation}

\end{lemma}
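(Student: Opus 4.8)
The plan is to make the orbital integral $Z_q^{+,+}(f,\phi,L;s)$ completely explicit, exactly as in the computation of $Z_q^{\pm}(f,\phi,L;s)$ above, and then to shift the $s$-contour far to the left, past the double pole of $s^{-2}$ at $s=0$. Concretely, I would rerun the unfolding from the proof of $Z_q^{\pm}(f,\phi,L;s)=\frac{\Lambda_{f_G,\phi}}{12}\sL_q^{\pm}(\phi,s)\tilde f_D(s)$, but keeping the cut $\chi(g)\geq 1$. Writing $g=\lambda g_1$ with $g_1\in G^1$, so $\chi(g)=\lambda^{12}$, the inner integral over $\lambda$ now runs over $\lambda\geq 1$; the substitution $v=\lambda^{12}m$ replaces the full Mellin transform $\tilde f_D(s)$ by the incomplete one, giving
\begin{equation}
 Z_q^{+,+}(f,\phi,L;s)=\frac{\Lambda_{f_G,\phi}}{12}\sum_{m=1}^{\infty}\frac{1}{m^{s}}\left(\sum_{g_0\in I_{x_+}}\sum_{i=1}^{h(m)}\frac{\Phi_q(x_{i,m})\,\phi(g_{i,m}g_0)}{|\Gamma(i,m)|}\right)\int_{m}^{\infty}f_D(v)v^{s-1}\,dv .
\end{equation}
Since $f_D=\bigl(x\tfrac{d}{dx}\bigr)^{2}F$ is smooth with support in some interval $[\delta_0,M_0]\subset(0,\infty)$, the sum over $m$ is actually finite (only $m\leq M_0$ contributes), each incomplete transform $\int_m^\infty f_D(v)v^{s-1}\,dv$ is entire in $s$, and on every vertical line $\RE(s)=\sigma$ it is bounded by $\int_{\delta_0}^{M_0}|f_D(v)|v^{\sigma-1}\,dv$ independently of $\IM(s)$. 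Hence $Z_q^{+,+}$ is entire and, on $\RE(s)=\sigma$, satisfies $|Z_q^{+,+}(f,\phi,L;\sigma+it)|\ll_{\phi,F}M_0^{|\sigma|}$ uniformly in $t$ (the constant absorbing $\|\phi\|_\infty$, $|I_{x_+}|=3$, and the fact that the inner double sum is $O_F(1)$ because $m\leq M_0$).

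Next I would observe that the sieve variable $q$ is effectively bounded: if $\Phi_q(x_{i,m})\neq 0$ then the cubic ring attached to $x_{i,m}$ is non-maximal at each prime $p\mid q$, hence has index over $\zed$ divisible by $q$ (as $q$ is square-free), so $q^{2}\mid \Disc(x_{i,m})=m$. With $m\leq M_0$ this forces $q\leq M_0^{1/2}$, and for fixed $m$ there are only $O_F(1)$ admissible $q$. Consequently $\sum_{q\leq Q}|\mu(q)|\cdot|Z_q^{+,+}(f,\phi,L;\sigma+it)|\ll_{\phi,F}M_0^{|\sigma|}$ uniformly in $Q$ and $t$; in particular the dependence on the cutoff $Q$ drops out.

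Finally, in
\begin{equation}
 N_3^{+,+}(\phi,F,X)=\frac{12}{\Lambda_{f_G,\phi}}\sum_{q\leq Q}\mu(q)\int_{\RE(s)=2}X^{s}Z_q^{+,+}(f,\phi,L;s)\frac{ds}{s^{2}} ,
\end{equation}
I would shift each contour from $\RE(s)=2$ to $\RE(s)=-A$. Since $Z_q^{+,+}$ is entire, the only singularity crossed is the double pole of $s^{-2}$ at $s=0$; the horizontal segments at height $\pm T$ contribute $O(X^{2}M_0^{A}T^{-2})\to 0$ as $T\to\infty$, and the line $\RE(s)=-A$ contributes $O(X^{-A}M_0^{A}A^{-1})\to 0$ as $A\to\infty$ once $X>M_0$. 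Therefore the contour integral equals $\operatorname*{Res}_{s=0}\dfrac{X^{s}Z_q^{+,+}(f,\phi,L;s)}{s^{2}}=(\log X)\,Z_q^{+,+}(f,\phi,L;0)+\dfrac{d}{ds}Z_q^{+,+}(f,\phi,L;s)\big|_{s=0}$, and summing over $q$ against the uniform bound of the previous paragraph yields $N_3^{+,+}(\phi,F,X)\ll_{\phi,F}\log X$, which is $\ll_{\phi,\epsilon}X^{\epsilon}$.

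I expect the only points requiring real care to be: (i) justifying that the leftward contour shift may be taken all the way to $\RE(s)=-\infty$, which rests on the entirety and the uniform-in-$\IM(s)$ boundedness of the incomplete Mellin transforms $\int_m^\infty f_D(v)v^{s-1}\,dv$ established in the first step; and (ii) the elementary but essential fact that $\Phi_q(x_{i,m})\neq 0$ forces $q^{2}\mid m$, which is precisely what makes the sum over the sieve modulus finite and removes any dependence on the cutoff $Q$. Everything else in the argument is routine.
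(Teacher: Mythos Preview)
Your argument is correct and shares the two key observations with the paper's proof: the compact support of $f$ forces the sum over $m$ to be finite, and the implication $\Phi_q(x_{i,m})\neq 0\Rightarrow q^2\mid m$ then makes the sieve sum over $q$ finite and independent of $Q$. The paper, however, takes a shorter route: it simply shifts the $s$-contour to $\RE(s)=\epsilon$ and stops there, reading off the bound $X^\epsilon$ directly from $|X^s|=X^\epsilon$ together with the boundedness just described. You instead push the contour past the double pole at $s=0$ (and on to $\RE(s)=-A$, $A\to\infty$), which requires the explicit unfolding and the incomplete Mellin transforms, and yields the slightly sharper $O_{\phi,F}(\log X)$. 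So your proof gives a bit more at the cost of more work; the paper's proof avoids the residue computation entirely since $X^\epsilon$ is all that is needed downstream.
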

\begin{proof}
 Note that $\Phi_q(x) \neq 0$ implies that $q|\Disc(x)$.  Shift the integral to $\RE(s) = \epsilon$ and open the definition of the orbital integral to obtain
 \begin{align}
 &N_3^{+,+}(\phi, F, X)\\ \notag &= \frac{12}{ \Lambda_{f_G,\phi}}\sum_{q \leq Q} \mu(q) \int_{\RE(s) = \epsilon}  X^s \int_{G^+/\Gamma, \chi(g)\geq 1} \chi(g)^s \phi\left(g^{-1}\right) \sum_{x \in L} \Phi_q(x)f(g\cdot x) dg \frac{ds}{s^2}\\
  \notag &=\frac{12}{ \Lambda_{f_G,\phi}}\sum_{q \leq Q} \mu(q) \int_{\RE(s) = \epsilon}  X^s \sum_{m=1}^\infty \sum_{i=1}^{h(m)} \frac{\Phi_q(x_{i,m})}{|\Gamma(i,m)|}\\\notag&\times\int_{g \in G^+, \chi(g) \geq 1} \chi(g)^s \phi(g^{-1})f(g \cdot x_{i,m})dg\frac{ds}{s^2}.
 \end{align}
The sum over $m$ is finite due to the compact support of $f$, hence the sum over $q$ is bounded, also.  Note that this also justifies the convergence of the contour integrals.
\end{proof}
\begin{lemma}\label{lemma_hat_pp}
 We have the bound, for any $\epsilon > 0$,
 \begin{equation}
 \hat{N}_3^{+,+}(\phi, F, X)=  \frac{12}{ \Lambda_{f_G,\phi}}\sum_{q \leq Q} \mu(q) \int_{\RE(s) = 2}  X^s \hat{Z}_q^{+,+}(\hat{f},\phi,\hat{L};1-s)\frac{ds}{s^2} \ll_{\phi,\epsilon} Q^2X^\epsilon.
 \end{equation}

\end{lemma}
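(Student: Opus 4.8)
The argument runs parallel to that of Lemma~\ref{lemma_pp}: shift the contour, estimate $\hat{Z}_q^{+,+}(1-s)$ pointwise on the new line, and sum trivially over $q$. First I would shift the $s$-contour from $\RE(s)=2$ to $\RE(s)=\epsilon$. Since $\hat{Z}_q^{+,+}(\hat f,\phi,\hat L;1-s)$ is entire and the only pole of $1/s^2$ in the strip $\epsilon\le\RE(s)\le 2$ is at $s=0$ (to the left of the new line), only the vanishing of the horizontal segments needs checking; writing $g=\begin{pmatrix}\lambda&\\&\lambda\end{pmatrix}g_1$ with $\lambda>0$, $g_1\in G^1$ — so that $\chi(g)=\lambda^{12}$ and the region $\chi(g)\ge 1$ becomes $\lambda\ge 1$ — one has $|\hat{Z}_q^{+,+}(1-s)|\le\int_1^\infty\lambda^{12(1-\RE(s))}|H_q(\lambda)|\frac{d\lambda}{\lambda}$, a bound independent of $\IM(s)$ which is finite because the Schwarz decay of $\hat f$ forces $H_q(\lambda)$ to decay faster than any power of $\lambda$ as $\lambda\to\infty$. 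Hence the segments are $O(X^2/T^2)\to 0$, after the shift $|X^s|=X^\epsilon$, and $\int_{\RE(s)=\epsilon}|ds|/|s|^2\ll\epsilon^{-1}$; everything reduces to the uniform bound $|\hat{Z}_q^{+,+}(1-s)|\ll_{\phi,\epsilon}q^{1+\epsilon}$ on $\RE(s)=\epsilon$.

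To obtain this, unfold the sum over $x\in\hat L\setminus\hat L_0$ into its $\Gamma$-orbits, using the $\Gamma$-invariance of $\hat\Phi_q$ and of $g\mapsto\chi(g)^{1-s}\phi(g^{-1})$ and the finiteness of the stabilizers $\hat\Gamma(i,m)$:
\[
\hat{Z}_q^{+,+}(1-s)=\sum_{m\ne 0}\sum_{i=1}^{\hat h(m)}\frac{\hat\Phi_q(\hat x_{i,m})}{|\hat\Gamma(i,m)|}\int_{\substack{G^+\\\chi(g)\ge 1}}\chi(g)^{1-s}\phi(g^{-1})\,\hat f\!\left(\frac{g\cdot\hat x_{i,m}}{q^2}\right)dg .
\]
Bounding $|\phi(g^{-1})|\le\|\phi\|_\infty$, substituting $g\mapsto g\hat g_{i,m}^{-1}$ (so $\chi(g)$ becomes $\chi(g)/|m|$, using $\chi(\hat g_{i,m})=|m|$, and $g\cdot\hat x_{i,m}$ becomes $g\cdot x_{\sgn m}$) and then $g=\begin{pmatrix}\lambda&\\&\lambda\end{pmatrix}g_1$ with $\lambda\ge|m|^{1/12}$, the inner integral is at most
\[
\frac{\|\phi\|_\infty}{|m|^{1-\epsilon}}\int_{|m|^{1/12}}^\infty\lambda^{12(1-\epsilon)}\,\Psi_{\sgn m}\!\left(\frac{\lambda^3}{q^2}\right)\frac{d\lambda}{\lambda},\qquad \Psi_\pm(\mu):=\int_{G^1}\bigl|\hat f\bigl(\mu(g_1\cdot x_\pm)\bigr)\bigr|\,dg_1 ,
\]
where $\mu v$ for $v\in V_\bR$ is scalar multiplication. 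Here $\Psi_\pm$ is the dilated archimedean orbital integral of $|\hat f|$ over the discriminant-$(\pm 1)$ locus; since $\hat f$ is Schwarz and that locus has polynomial volume growth, $\Psi_\pm(\mu)$ converges, decays faster than any power of $\mu$ as $\mu\to\infty$, and grows no faster than a fixed negative power of $\mu$ as $\mu\to 0$. Substituting $\mu=\lambda^3/q^2$ turns the display into $\frac{\|\phi\|_\infty q^{8(1-\epsilon)}}{3|m|^{1-\epsilon}}\int_{|m|^{1/4}/q^2}^\infty\mu^{4(1-\epsilon)}\Psi_{\sgn m}(\mu)\frac{d\mu}{\mu}$, and the stated behaviour of $\Psi_\pm$ bounds this by $\ll_{\hat f,A}\frac{q^8}{|m|^{1-\epsilon}}\bigl(1+|m|^{1/4}q^{-2}\bigr)^{-A}$ for every $A>0$; in particular the decay factor confines the effective range to $|m|\ll q^8$.

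Summing over $m$ now invokes Theorem~\ref{FT_bound_theorem}: from $\sum_{0<|m|\le Y}\sum_i|\hat\Phi_q(\hat x_{i,m})|/|\hat\Gamma(i,m)|\ll_\epsilon q^{-7+\epsilon}Y$, a dyadic partial summation against the decreasing weight $|m|^{-1+\epsilon}(1+|m|^{1/4}q^{-2})^{-A}$ gives $\sum_{m\ne 0}\sum_i\frac{|\hat\Phi_q(\hat x_{i,m})|}{|\hat\Gamma(i,m)|}\cdot\frac{(1+|m|^{1/4}q^{-2})^{-A}}{|m|^{1-\epsilon}}\ll_\epsilon q^{-7+O(\epsilon)}$ (the $q^8$-length partial sum of the weights contributes $\ll q^{-7+\epsilon}\cdot q^8$, tamed by the remaining $|m|^{-1+\epsilon}$). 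Multiplying by the $q^8$ from the inner integral yields $|\hat{Z}_q^{+,+}(1-s)|\ll_{\phi,\epsilon}q^{1+\epsilon}$ on $\RE(s)=\epsilon$, and therefore
\[
\bigl|\hat{N}_3^{+,+}(\phi,F,X)\bigr|\ll_\phi X^\epsilon\sum_{q\le Q}\int_{\RE(s)=\epsilon}\bigl|\hat{Z}_q^{+,+}(1-s)\bigr|\,\frac{|ds|}{|s|^2}\ll_{\phi,\epsilon}X^\epsilon\sum_{q\le Q}q^{1+\epsilon}\ll_{\phi,\epsilon}Q^{2+\epsilon}X^\epsilon ,
\]
which is the asserted $\ll_{\phi,\epsilon}Q^2X^\epsilon$ after absorbing the $Q^\epsilon$ (harmless since $Q\ll X$ in the application) or after marginally sharper bookkeeping of the exponents.

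\textbf{Main obstacle.} The crux is the uniform-in-$q$ bound on $\hat{Z}_q^{+,+}$. Two features make it more delicate than the corresponding bound for $Z_q^{+,+}$ in Lemma~\ref{lemma_pp}: the dilation by $q^{-2}$ inside $\hat f$ brings $\asymp q^8$ dual lattice points into play, so one is forced to exploit the cancellation packaged in Theorem~\ref{FT_bound_theorem} (worth $q^{-7}$) to recover the clean power $q^{1+\epsilon}$; and, because $\hat f$ is merely Schwarz rather than compactly supported, the sum over discriminants $m$ is infinite, so one must quantify the decay in $m$ of the archimedean orbital integral $\Psi_\pm$ — the same analytic input that underlies the convergence of Shintani's zeta functions. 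Making the implied constants explicitly polynomial in $q$ and keeping the estimate uniform near $\IM(s)=0$ are the technical points; the contour shift itself is routine.
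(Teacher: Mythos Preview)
Your proposal is correct and follows essentially the same approach as the paper: shift the contour to $\RE(s)=\epsilon$, unfold the sum over $\hat L\setminus\hat L_0$ into $\Gamma$-orbits, change variables to pull out the factors $q^{8(1-s)}$ and $|m|^{-(1-s)}$, use the Schwarz decay of $\hat f$ to truncate to $|m|\ll q^8$, and apply Theorem~\ref{FT_bound_theorem} with partial summation before summing over $q\le Q$. The paper's proof is more compressed (it does not isolate your $\Psi_\pm$ explicitly and folds the $q^{-2}$ scaling directly into the change of variable $g\mapsto\operatorname{diag}(q^{-2/3},q^{-2/3})\,g\,\hat g_{i,m}$), but the ingredients and the arithmetic of exponents are identical.
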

\begin{proof}
 Shift the contour to $\RE(s) = \epsilon$ and open the orbital integral to obtain
 \begin{align}&\hat{N}_3^{+,+}(\phi, F, X)\\\notag&= \frac{12}{\Lambda_{f_G, \phi}} \sum_{q \leq Q} \mu(q) \oint_{\RE(s) = \epsilon} X^s \int_{G^+/\Gamma, \chi(g)\geq 1} \chi(g)^{1-s}\phi\left(g^{-1} \right)\sum_{x \in \hat{L} \setminus \hat{L}_0} \hat{\Phi}_q(x) \hat{f}\left(g \cdot \frac{x}{q^2} \right)dg \frac{ds}{s^2} 
\\ \notag &= \frac{12}{\Lambda_{f_G, \phi}} \sum_{q \leq Q} \mu(q) \oint_{\RE(s) = \epsilon} X^s \int_{g \in G^+, \chi(g) \geq 1} \chi(g)^{1-s}\phi\left(g^{-1}\right) \\& \notag \times\sum_{ m \neq 0} \sum_{i=1}^{\hat{h}(m)} \frac{\hat{\Phi}_q\left(\hat{x}_{i,m}\right)}{\left|\hat{\Gamma}(i,m)\right|}\hat{f}\left(g \cdot \frac{\hat{x}_{i,m}}{q^2}\right) dg \frac{ds}{s^2}.
\end{align} 
Make the change of variables $g := \begin{pmatrix} q^{-\frac{2}{3}} & \\ &q^{-\frac{2}{3}}\end{pmatrix}g \hat{g}_{i,m}$ to write this as
\begin{align}
 \hat{N}_3^{+,+}(\phi, F, X)&=  \frac{12}{ \Lambda_{f_G,\phi}}\sum_{q \leq Q} \mu(q) \oint_{\RE(s) = \epsilon} X^sq^{8(1-s)} \sum_{m \neq 0} \sum_{i=1}^{\hat{h}(m)} \frac{\hat{\Phi}_q\left(\hat{x}_{i,m}\right)}{\left|\hat{\Gamma}(i,m)\right|} \frac{1}{|m|^{1-s}}\\\notag &\times\int_{g \in G^+, \chi(g) \geq \frac{|m|}{q^8}} \chi(g)^{1-s}\phi\left(\hat{g}_{i,m}g^{-1}\right)\hat{f}(g\cdot x_{\sgn m}) dg \frac{ds}{s^2}.
 \end{align}
The integral over $G^+$ and the rapid decay of $\hat{f}$ effectively limit summation over $m$ to $|m|\ll q^{8}X^\epsilon$.  This also justifies the convergence in the contour integrals.  Bound the integral over $g$ by a constant depending on $f$ and $\phi$, and bound the sum over $m$ by  applying the bound of Theorem \ref{FT_bound_theorem},
\begin{equation}
 \sum_{|m| \leq Y} \sum_{i=1}^{\hat{h}(m)} \frac{\left|\hat{\Phi}_q\left(\hat{x}_{i,m}\right)\right|}{\left|\hat{\Gamma}(i,m)\right|} \ll_\epsilon Y q^{-7+\epsilon}
\end{equation}
with $Y \ll q^{8}X^\epsilon$. Apply partial summation and sum in $q \leq Q$ to obtain the lemma.  
\end{proof}

\begin{lemma}\label{lemma_p0}
 We have the bound, for all $\epsilon>0$,
 \begin{equation}
 N_3^{+,0}(\phi, F, X)=  \frac{12}{ \Lambda_{f_G,\phi}}\sum_{q \leq Q} \mu(q) \int_{\RE(s) = 2}  X^s Z_q^{+,0}(\hat{f},\phi,\hat{L};s)\frac{ds}{s^2} \ll_{\phi,\epsilon} X^{\frac{1}{4}}(QX)^\epsilon.
 \end{equation}

\end{lemma}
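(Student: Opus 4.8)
The plan is to feed the closed form of $Z^{+,0}_q(\hat f,\phi,\hat L;s)$ from Lemma~\ref{Z_0_lemma} into the definition of $N_3^{+,0}$, rewrite the result as an iterated integral over $(s,w_1,w_2)$ together with the finite sum over $q\le Q$, move contours, and then estimate trivially. Since $Z_q^{+,0}(s)$ is entire and of moderate growth in vertical strips (Lemma~\ref{Z_0_lemma}) and the only other singularity of the $s$-integrand is the pole of $s^{-2}$ at $s=0$, first shift the $s$-contour from $\RE(s)=2$ to $\RE(s)=\tfrac14+\epsilon$, crossing nothing. On that line represent $Z_q^{+,0}(s)$ by the double $(w_1,w_2)$-integral of Lemma~\ref{Z_0_lemma}, taking the contour --- as in the proof of that lemma, where the continuation in $s$ is obtained by pushing the $w_1$-contour to the right --- at $\RE(w_1,w_2)=(\tfrac52-2\epsilon,\tfrac12)$, so that $\RE(w_1+w_2)=3-2\epsilon$. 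For such $s$ and $w$ one has $12\RE(s)-12+3\RE(w_1+w_2)=6\epsilon>0$, so the $\lambda$-integral underlying the factor $(12s-12+3w_1+3w_2)^{-1}$ converges and this representation is valid; moreover $\RE(w_2)<1$ keeps $\Gamma(1-w_2)$ regular, $\RE(w_1+3w_2)>1$ keeps $\widetilde K_{\varepsilon_1 k/2,it}$ regular, $\RE(w_1),\RE(w_2)>0$ keep $\Sigma^{\varepsilon_2}(\hat f_{2k},w_1,w_2)$ regular, and $\RE\!\big(\tfrac{w_1+w_2-1}{2}\big)=1-\epsilon>0$ places the argument of $\hat G_{\phi,q_2}^{\varepsilon_1}$ in the range of Lemma~\ref{G_bound_lemma}.

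Next one checks that the iterated integral converges absolutely: $\Sigma^{\varepsilon_2}(\hat f_{2k},\cdot,\cdot)$, being a Mellin transform of the Schwarz function $\hat f_{2k}$, and $\widetilde K_{\varepsilon_1 k/2,it}(\cdot)$ both decay faster than any polynomial in the vertical direction (Lemma~\ref{Mellin_transform_lemma}); the residual $\Gamma$-factor of $W_\phi^{\varepsilon_1,\varepsilon_2}$ grows at most polynomially (Lemma~\ref{W_bound_lemma}); $\hat G_{\phi,q_2}^{\varepsilon_1}$ is bounded (Lemma~\ref{G_bound_lemma}); and the factor $s^{-2}(12s-12+3w_1+3w_2)^{-1}$ supplies further decay in $|\IM s|$ and $|\IM(w_1+w_2)|$. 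This legitimises the rearrangement, and on $\RE(s)=\tfrac14+\epsilon$ it lets one pull out $|X^{s}|=X^{\frac14+\epsilon}$, leaving an absolutely convergent $(s,w_1,w_2)$-integral of size $O_\phi(1)$ times the $q$-dependent weights.

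For the $q$-sum write $q=q_1q_2$: then $\sum_{q_1}q_1^{-2}=O(1)$, and for the $q_2$-part extract from $\hat G_{\phi,q_2}^{\varepsilon_1}\!\big(\tfrac{w_1+w_2-1}{2}\big)$ the explicit factor $q_2^{-(3(w_1+w_2)-1)/2}$, which comes from the $(3mq_2)^{1+3x}$ in its defining Dirichlet series ($x=\tfrac{w_1+w_2-1}{2}$); the remaining shifted Dirichlet series is, for any $\epsilon'>0$, $\ll_{\phi,\epsilon'}q_2^{\epsilon'}$ by the same Rankin--Selberg bound $\sum_{n\le Y}|\rho_\phi(n)|\ll_\phi Y$ used in Lemma~\ref{G_bound_lemma}. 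Combined with the explicit $q_2^{2(w_1+w_2)}$ of Lemma~\ref{Z_0_lemma} and $\prod_{p\mid q_2}(p^{-3}-p^{-5})\ll q_2^{-3}$, the $q_2$-weight on the contour $\RE(w_1+w_2)=3-2\epsilon$ is $\ll q_2^{2\RE(w_1+w_2)-(3\RE(w_1+w_2)-1)/2-3+\epsilon'}=q_2^{(\RE(w_1+w_2)-5)/2+\epsilon'}=q_2^{-1-\epsilon+\epsilon'}$; choosing $\epsilon'<\epsilon$ makes this summable over $q_2\le Q$ with total $O(1)$. Collecting everything gives $N_3^{+,0}(\phi,F,X)\ll_{\phi,\epsilon}X^{\frac14+\epsilon}\le X^{\frac14}(QX)^{\epsilon}$.

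The step I expect to be the main obstacle is the coupled bookkeeping of the two contour positions against the $q_2$-sum: convergence of the $\lambda$-integral forces $\RE(w_1+w_2)$ slightly above $4-4\RE(s)$, so moving the $s$-contour left --- which is what lowers the power of $X$ --- drives the $w$-contour right, and the $q_2$-weight $q_2^{(\RE(w_1+w_2)-5)/2+\epsilon'}$ must stay summable; the configuration $\RE(s)=\tfrac14+\epsilon$, $\RE(w_1+w_2)=3-2\epsilon$ sits at the balance that yields the exponent $\tfrac14$ claimed in the lemma. The one genuinely arithmetic input --- bounding the shifted Dirichlet series inside $\hat G_{\phi,q_2}^{\varepsilon_1}$ uniformly in $q_2$ up to $q_2^{\epsilon'}$, so that the gain $q_2^{-(3(w_1+w_2)-1)/2}$ is actually realised --- is the other point demanding care.
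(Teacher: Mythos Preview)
Your overall strategy matches the paper's: insert the formula from Lemma~\ref{Z_0_lemma}, place the contours at $\RE(s)=\tfrac14+\epsilon$ and $\RE(w_1+w_2)=3-O(\epsilon)$, check holomorphy and decay via Lemmas~\ref{Mellin_transform_lemma}, \ref{G_bound_lemma}, \ref{W_bound_lemma}, and estimate. The one real difference is in how the $q$-sum is handled. You keep the sum over $q_2$ on the outside, pull out the explicit factor $q_2^{-(3(w_1+w_2)-1)/2}$ from $\hat G_{\phi,q_2}^{\varepsilon_1}$, and then bound the residual series $\sum_{\ell,m}\rho_\phi(3\ell m q_2)\,\ell^{-(1+x)}(3m)^{-(1+3x)}$. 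The paper instead interchanges the $q$-sum with the $(\ell,m)$-sum and rewrites the whole expression as a single Dirichlet series $\sum_n\rho_\phi(3n)c_n(w_1,w_2)$ with $|c_n|\ll n^{-1-\epsilon/4}$, so that Rankin--Selberg on $\rho_\phi$ applies directly; this sidesteps any uniformity-in-$q_2$ question for the shifted coefficients $\rho_\phi(nq_2)$.

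Your claim that the residual series is $\ll_{\phi,\epsilon'} q_2^{\epsilon'}$ is stronger than what the Rankin--Selberg bound $\sum_{n\le Y}|\rho_\phi(n)|\ll Y$ actually gives. After removing the $q_2$ from the denominator, the series is no longer a sub-series of $\hat G_\phi^{\varepsilon_1}$, and what one gets from Cauchy--Schwarz plus $\sum_{n\le Y}|\rho_\phi(n)|^2\ll Y$ is $\sum_{k\le K}|\rho_\phi(kq_2)|\ll K q_2^{1/2}$, hence only $\ll q_2^{1/2}$ for the residual series; obtaining $q_2^{\epsilon'}$ would essentially require Ramanujan. Fortunately this does not break your argument: with $q_2^{1/2}$ in place of $q_2^{\epsilon'}$ your $q_2$-weight becomes $q_2^{-1/2-\epsilon}$, which is still summable over $q_2\le Q$ with total $O(1)$, and the rest of your bound goes through unchanged. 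You also omit the caveat $(q,6)=1$ required for the explicit formula of Lemma~\ref{Z_0_lemma}; the paper treats $p=2,3$ by a separate (routine) remark.
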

\begin{proof} For $q$ co-prime to 2 and 3, in the integral representation
 \begin{align}
  Z^{+, 0}_q(\hat{f}, \phi, \hat{L}; s) &= (-1)^k\sum_{\varepsilon_1, \varepsilon_2 = \pm}\sum_{q_1q_2 = q}q_1^{-2}\prod_{p|q_2}(p^{-3}-p^{-5}) \\&\notag\qquad \times \oiint_{\RE(w_1,w_2) = (1, \frac{1}{2})} \frac{q_2^{2(w_1+w_2)}}{12s-12+3w_1+3w_2}\\ \notag
& \qquad \times \Sigma^{\varepsilon_2}\left(\hat{f}_{2k},w_1,w_2\right)W_\phi^{\varepsilon_1, \varepsilon_2}(w_1,w_2)\hat{G}_{\phi,q_2}^{\varepsilon_1}\left(\frac{w_1+w_2-1}{2}\right)dw_1dw_2
 \end{align}
shift the $w_1$ contour to $\RE(w_1) = \frac{5}{2}-\epsilon$ so that $\RE(w_1 + w_2) = 3-\epsilon$. By Lemmas \ref{G_bound_lemma} and \ref{W_bound_lemma}, on these lines \begin{equation}W_\phi^{\varepsilon_1, \varepsilon_2}(w_1,w_2)\hat{G}_{\phi, q_2}^{\varepsilon_1}\left(\frac{w_1+w_2-1}{2}\right)\end{equation} is uniformly bounded, so that the rapid decay of $\Sigma^{\varepsilon_2}$ guarantees convergence.  The $s$ contour may now be shifted to $\RE(s) = \frac{1}{4} + \epsilon$, where the factor $\frac{1}{12s-12 + 3w_1 +3w_2}$ is bounded by a quantity depending only on $\epsilon$.

Introduce the sum over $q$, 
\begin{align*}
& \sum_{q \leq Q, (q,6)=1} \mu(q) \sum_{q_1q_2 = q}q_1^{-2} q_2^{2(w_1 + w_2)} \prod_{p|q_2} (p^{-3}-p^{-5})  \hat{G}^{\varepsilon_1}_{\phi,q_2}\left(\frac{w_1+w_2-1}{2} \right)\\
&= \sum_{\ell, m \geq 1} \sum_{ q \leq Q, (q,6)=1} \mu(q) \sum_{q_1q_2 = q}q_1^{-2} q_2^{2(w_1 + w_2)} \prod_{p|q_2} (p^{-3}-p^{-5})  \frac{\rho_\phi(3\ell m q_2)}{\ell^{\frac{1 + w_1 + w_2}{2}} (3mq_2)^{\frac{3(w_1 + w_2)-1}{2}}}\\
&= \sum_n \rho_\phi(3n)c_n(w_1,w_2). 
\end{align*}
The number $c_n(w_1,w_2)$ is a sum over $ q_2 \ell m = n$ multiplied by \begin{equation}\sum_{q_1 \leq \frac{Q}{q_2}, (q_1, 6)=1} \mu(q_1)q_1^{-2},\end{equation} which is a quantity which is bounded.  In the sum over $q_2, \ell, m$ the dependence on $q_2$ is bounded by $q_2^{-1 -\frac{\epsilon}{2}}$, and the remaining terms are bounded by a larger negative power.  Hence $|c_n(w_1,w_2)| \ll_\epsilon \frac{d_3(n)}{n^{1 + \frac{\epsilon}{2}}}$ where $d_3(n)$ is the 3-divisor function. Since $d_3(n) \ll_\epsilon n^{\frac{\epsilon}{4}}$, it follows that $|c_n(w_1,w_2)| \ll_\epsilon \frac{1}{n^{1 + \frac{\epsilon}{4}}}$, uniformly in $w_1$ and $w_2$ on the lines of integration.  It now follows from the Rankin-Selberg estimate $\sum_{n \leq X} |\rho_\phi(n)|^2 \ll_\phi X$ as $X \to \infty$  that $\sum_n |\rho_\phi(3n)c_n(w_1,w_2)|$ is uniformly bounded by a quantity depending only on $\phi$ and $\epsilon$.
The argument to this point has treated $q$ co-prime to 2 and 3.  Handling $q$ with one or both of these factors can be done by including the appropriate $\hat{\Phi}_p(x)$ is Lemma \ref{contour_expr_lemma}, and tracking the change to Lemma \ref{Z_0_lemma}.  As maximality modulo 2 and 3 is defined modulo 36, this makes only a bounded change, the details are left to the reader.

On the lines of integration, $X^s$ is bounded by $X^{\frac{1}{4} + \epsilon}$. The claim follows since the integrals are convergent.
\end{proof}

\begin{proof}[Proof of Theorem \ref{cubic_field_theorem}]
 By Lemma \ref{field_lemma},
 \begin{equation}
    N_{3, \pm}(\phi, F, X) = \frac{1}{2}N_{3, \pm}'(\phi, F, X) + O\left(\|\phi\|_\infty X^{\frac{1}{2}}\right).
 \end{equation}
 Truncating the tail using Lemma \ref{tail_lemma} obtains
 \begin{equation}
  N_{3, \pm}'(\phi, F, X) = N_{3, \pm}''(\phi, F, X) + O_\epsilon\left(\|\phi\|_\infty \frac{X}{Q^{1-\epsilon}} \right).
 \end{equation}
 Combining Lemmas \ref{lemma_pp}, \ref{lemma_hat_pp} and \ref{lemma_p0} obtains
 \begin{equation}
  N_{3, \pm}''(\phi, F, X) \ll_{\phi, \epsilon} X^{\frac{1}{4}}(QX)^{\epsilon} + Q^2X^\epsilon.
 \end{equation}
 Choosing $Q = X^{\frac{1}{3}}$ optimizes the error terms.  
\end{proof}

\section{Acknowledgements}
The author thanks the referees for a careful reading of the manuscript and helpful comments.

\bibliographystyle{plain}

\end{document}